\newcommand{\eref}[1]{\eqref{eq:#1}}
\newcommand{\figref}[1]{Figure~\ref{fig:#1}}
\newcommand{\arxiv}[1]{{\tt \href{http://arxiv.org/abs/#1}{arXiv:#1}}}
\newcommand{\floor}[1]{\left\lfloor {#1} \right\rfloor}
\newcommand{\ceiling}[1]{\left\lceil {#1} \right\rceil}
\newcommand{\old}[1]{}
\newcommand{\moniker}[1]{{\em (#1)}}
\newcommand{\inner}[1]{\langle #1 \rangle} 
\DeclareRobustCommand{\SkipTocEntry}[5]{}
\newtheorem{theorem}{Theorem}[section]
\newtheorem{prop}[theorem]{Proposition}
\newtheorem{lemma}[theorem]{Lemma}
\newtheorem{corollary}[theorem]{Corollary}
\newtheoremstyle{named}{}{}{\itshape}{}{\bfseries}{.}{.5em}{\thmnote{#3}#1}
\theoremstyle{named}
\newtheorem*{namedtheorem}{}
\theoremstyle{remark}
\numberwithin{counter}{section}
\theoremstyle{definition}
\def\Points{{::}}
\def\div{{\text{div}\hspace{0.5ex}}}
\def\xx{\mathbf{x}}
\def\oball{\mathrm{B}} 
\def\cball{\overline{\oball}} 
\def\dball{\mathbb{B}} 
\def\zero{\mathbf{0}} 
\def\green{G} 
\def\d{\,\mathrm{d}}
\def\eps{\epsilon}
\renewcommand{\phi}{\varphi}
\def\Rec{\mathrm{Rec}}
\def\Sym{\mathrm{Sym}}
\def\EE{\mathbb{E}}
\def\N{\mathbb{N}}
\def\PP{\mathbb{P}}
\def\R{\mathbb{R}}
\def\Z{\mathbb{Z}}
\def\xx{\mathbf{x}}
\def\yy{\mathbf{y}}
\def\00{\mathbf{0}}
\def\basis{\mathbf{e}}
\def\Basis{\mathcal{E}}
\newcommand{\Frame}[1]{}
\newcommand{\FIG}[1]{}
\newcommand{\stab}[1]{\hat{#1}}
\newsavebox{\smlmat}
\savebox{\smlmat}{$\left(\begin{smallmatrix} c-a & b \\ b & c+a \end{smallmatrix}\right) \in \Gamma(\Z^2)$}
\begin{document}

\title{Laplacian growth, sandpiles and scaling limits}

\author{Lionel Levine and Yuval Peres}

\address{Lionel Levine, Department of Mathematics, Cornell University, Ithaca, NY 14853. {\tt \url{http://www.math.cornell.edu/~levine}}}

\address{Yuval Peres, Microsoft Research, Redmond, WA 98052. \\
{\tt \url{http://research.microsoft.com/en-us/um/people/peres/}}}

\thanks{LL was supported by NSF grant \href{http://www.nsf.gov/awardsearch/showAward?AWD_ID=1455272}{DMS-1455272} and a Sloan Fellowship.
}

\date{August 1, 2016}
\keywords{abelian sandpile, chip-firing, discrete Laplacian, divisible sandpile, Eulerian walkers, internal diffusion limited aggregation, looping constant, obstacle problem, rotor-router model, scaling limit, unicycle, Tutte slope}
\subjclass[2010]{31C20, 
35R35, 
60G50, 
60K35, 
82C24
}

\begin{abstract}
Laplacian growth is the study of interfaces that move in proportion to harmonic measure. Physically, it arises in fluid flow and electrical problems involving a moving boundary. We survey progress over the last decade on discrete models of (internal) Laplacian growth, including the abelian sandpile, internal DLA, rotor aggregation, and the scaling limits of these models on the lattice $\eps \Z^d$ as the mesh size $\eps$ goes to zero.  These models provide a window into the tools of discrete potential theory: harmonic functions, martingales, obstacle problems, quadrature domains, Green functions, smoothing.  We also present one new result: rotor aggregation in $\Z^d$ has $O(\log r)$ fluctuations around a Euclidean ball, improving a previous power-law bound. We highlight several open questions, including whether these fluctuations are $O(1)$.
\end{abstract}

\maketitle

\section{The abelian sandpile model}

Start with $n$ particles at the origin in the square grid $\Z^2$, and let them spread out according to the following rule: whenever any site in $\Z^2$ has $4$ or more particles, it gives one particle to each of its $4$ nearest neighbors (North, East, South and West). The final configuration of particles does not depend on the order in which these moves are performed (which explains the term ``abelian''; see Lemma~\ref{l.subseq} below).

\begin{figure}[h]
\centering
\begin{tabular}{cc}
\includegraphics[height=.48\textwidth]{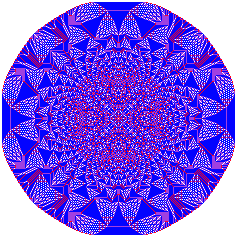} &
\includegraphics[height=.48\textwidth]{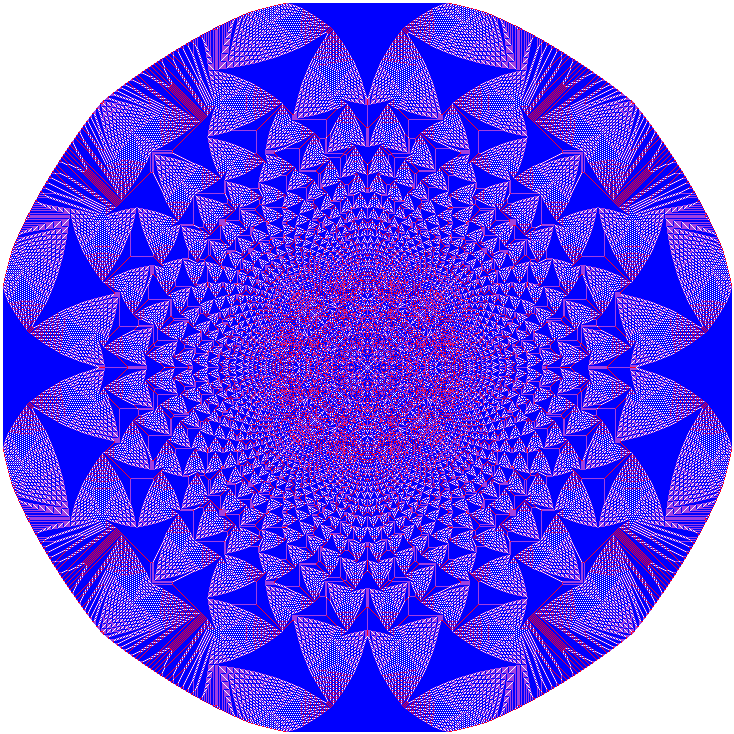} \\
$n=10^5$ & $n=10^6$
\end{tabular}
\caption{Sandpiles in $\Z^2$ formed by stabilizing $10^5$ and $10^6$ particles at the origin. Each pixel is colored according to the number of sand grains that stabilize there (white~$0$, red~$1$, purple~$2$, blue~$3$). The two images have been scaled to have the same diameter.}
\label{f.singlesource}
\end{figure}

This model was invented in 1987 by the physicists Bak, Tang and Wiesenfeld \cite{BTW87}. While defined by a simple local rule, it produces self-similar global patterns that call for an explanation. Dhar \cite{Dhar90} extended the model to any base graph and discovered the abelian property.  The abelian sandpile was independently discovered by combinatorialists \cite{BLS91}, who called it \emph{chip-firing}. Indeed, in the last two decades the subject has been enriched by an exhilarating interaction of numerous areas of mathematics, including statistical physics, combinatorics, free boundary PDE, probability, potential theory, number theory and group theory. More on this below. There are also connections to algebraic geometry \cite{Lor89,BN07,PPW13}, commutative algebra \cite{MS13,MS14} and computational complexity \cite{MN99,Babai,Cai15}. For software for experimenting with sandpiles, see \cite{Perkinson}.

Let $G=(V,E)$ be a locally finite connected graph.
A \textbf{sandpile} on $G$ is a function $s: V \to \Z$. We think of a positive value $s(x)>0$ as a number of sand grains (or ``particles'') at vertex $x$, and negative value as a hole that can be filled by particles.
Vertex $x$ is \textbf{unstable} if $s(x) \geq \deg(x)$, the number of edges incident to $x$. \textbf{Toppling} $x$ is the operation of sending $\deg(x)$ particles away from $x$, one along each incident edge.
We say that a sequence of vertices $\xx = (x_1,\ldots,x_m)$ is \textbf{legal} for $s$ if $s_i(x_i) \geq \deg(x_i)$ for all $i=1,\ldots,m$, where $s_i$ is the sandpile obtained from $s$ by toppling $x_1,\ldots,x_{i-1}$; we say that $\xx$ is \textbf{stabilizing} for $s$ if $s_m \leq \deg -1$. (All inequalities between functions are pointwise.)

\old{
\begin{lemma}
For any sandpile $s : V \to \Z$, if $\xx = (x_1,\ldots,x_m)$ is legal for $s$ and $\yy = (y_1,\ldots,y_n)$ is stabilizing for $s$, then $\xx$ is a permutation of a subsequence of $\yy$.
\end{lemma}
}

\begin{lemma}
\label{l.subseq}
Let $s : V \to \Z$ be a sandpile, and suppose there exists a sequence $\yy = (y_1,\ldots,y_n)$ that is stabilizing for $s$.
\begin{enumerate}
	\item[(i)] Any legal sequence $\xx=(x_1,\ldots,x_m)$ for $s$ is a permutation of a subsequence of $\yy$.
	\item[(ii)] There exists a legal stabilizing sequence for $s$.
	\item[(iii)] Any two legal stabilizing sequences for $s$ are permutations of each other.
\end{enumerate}
\end{lemma}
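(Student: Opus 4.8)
\emph{Proof plan.} The content of the lemma is concentrated in part~(i), a \emph{least action}--type principle; parts (ii) and (iii) will be short deductions from it. The structural fact I would isolate first is that a single toppling acts on sandpiles by \emph{adding a fixed vector}: toppling $x$ subtracts $\deg(x)$ from the value at $x$ and adds to every other vertex the number of edges joining it to $x$, and this change depends on $x$ alone, not on $s$. Consequently the configuration produced by a finite sequence of topplings --- whether or not the individual moves are legal --- depends only on how many times each vertex is toppled, not on the order. In particular, permuting a stabilizing sequence $\yy$ yields another (in general illegal) sequence that ends at the very same stable configuration.

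I would prove~(i) by induction on $m = |\xx|$, the case $m=0$ being vacuous. For $m \ge 1$ write $\xx = (x_1,x_2,\ldots,x_m)$. First I would show $x_1$ occurs in $\yy$: if it did not, then no move in $\yy$ would ever remove a particle from $x_1$, so the configuration reached at the end of $\yy$ would have value at least $s(x_1) \ge \deg(x_1)$ at $x_1$ --- the last inequality because $x_1$ is legal for $s$ --- contradicting that $\yy$ is stabilizing. Next, let $y_j$ be the \emph{first} occurrence of $x_1$ in $\yy$, let $\yy'$ be $\yy$ with that entry deleted, and let $s'$ be the sandpile obtained from $s$ by toppling $x_1$. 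By the order-independence above, ``topple $x_1$, then follow $\yy'$'' reaches the same stable configuration from $s$ that $\yy$ does, so $\yy'$ is stabilizing for $s'$; and $\xx' := (x_2,\ldots,x_m)$ is legal for $s'$ since $\xx$ is legal for $s$. The inductive hypothesis makes $\xx'$ a permutation of a subsequence of $\yy'$, and re-inserting $x_1 = y_j$ exhibits $\xx$ as a permutation of a subsequence of $\yy$.

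Granting~(i), part~(ii) follows by greedily toppling an unstable vertex for as long as one exists: this generates legal sequences for $s$ of strictly increasing length, but by~(i) every legal sequence for $s$ is a permutation of a subsequence of the given $\yy$ and hence has length at most $|\yy|$, so the process must halt --- necessarily at a configuration with no unstable vertex, i.e.\ having produced a legal stabilizing sequence. For part~(iii), if $\xx$ and $\yy$ are both legal and stabilizing, applying~(i) once with these roles and once with the roles swapped shows each is a permutation of a subsequence of the other; comparing lengths forces $|\xx| = |\yy|$, whence the two are permutations of one another.

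The step I expect to demand the most care is the deletion-and-recurse move inside the induction for~(i): checking that removing the first occurrence of $x_1$ from $\yy$ still leaves a \emph{stabilizing} sequence for the once-toppled configuration $s'$. This is precisely where order-independence of topplings --- as operations on configurations, not as legal moves --- is used, and it is important that the hypothesis of~(i) asks only that $\yy$ be stabilizing, not legal: were legality required there, the deletion could violate it and the induction would break down. A minor point to get right is the edge-multiplicity bookkeeping in the description of a toppling when $G$ has multiple edges between two vertices (or self-loops, if those are allowed), but this does not affect the structure of the argument.
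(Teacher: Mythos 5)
Your proof is correct and follows essentially the same route as the paper's: an induction on the length of the legal sequence, peeling off $x_1$, using that the effect of a toppling sequence on the configuration depends only on the multiset of topplings (so deleting an occurrence of $x_1$ from $\yy$ leaves a stabilizing sequence for the once-toppled $s'$), and then deducing (ii) and (iii) from the length bound in (i). The one cosmetic difference is that you delete the \emph{first} occurrence of $x_1$ in $\yy$, while the paper deletes any occurrence; either works, precisely because of the order-independence you isolate at the start.
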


\begin{proof}
Since $\xx$ is legal for $s$ we have $s(x_1) \geq \deg(x_1)$. Since $\yy$ is stabilizing for $s$ it follows that $y_i = x_1$ for some $i$.  Toppling $x_1$ yields a new sandpile $s'$. Removing $x_1$ from $\xx$ and $y_i$ from $\yy$ yields shorter legal and stabilizing sequences for $s'$, so (i) follows by induction.

Let $\xx$ be a legal sequence of maximal length, which is finite by (i). Such $\xx$ must be stabilizing, which proves (ii).

Statement (iii) is immediate from (i).
\end{proof}

We say that $s$ \textbf{stabilizes} if there is a sequence that is
stabilizing for $s$.
If $s$ stabilizes, we define its \textbf{odometer} as the function on vertices
	\[ u(x) = \text{number of occurrences of $x$ in any legal stabilizing sequence for $s$}. \]
The \textbf{stabilization} $\stab{s}$ of $s$ is the result of toppling a legal stabilizing sequence for $s$. The odometer determines the stabilization, since
	\begin{equation} \label{e.stab} \stab{s} = s + \Delta u \end{equation}
where $\Delta$ is the \textbf{graph Laplacian}	
	\begin{equation} \label{e.laplacian} \Delta u(x) = \sum_{y \sim x} (u(y)-u(x)). \end{equation}
Here the sum is over vertices $y$ that are neighbors of $x$.

By Lemma~\ref{l.subseq}(iii), both the odometer $u$ and the stabilization $\hat{s}$ depend only on $s$, and not on the choice of legal stabilizing sequence, which is one reason the model is called \textbf{abelian} (another is the role played by an abelian group; see Section~\ref{s.group}).

What does a very large sandpile look like? The similarity of the two sandpiles in Figure~\ref{f.singlesource} suggests that some kind of limit exists as we take the number of particles $n \to \infty$ while ``zooming out'' so that each square of the grid has area $1/n$.  The first step toward making this rigorous is to reformulate Lemma~\ref{l.subseq} in terms of the Laplacian as follows.

\begin{namedtheorem}[Least Action Principle] \label{LAP}
If there exists $w: V \to \N$ such that
	\begin{equation} \label{e.LAP} s+ \Delta w \leq \deg -1 \end{equation}
then $s$ stabilizes, and $w \geq u$ where $u$ is the odometer of $s$. Thus,
	\begin{equation} \label{e.variational} u(x) = \inf \{ w(x) \,|\, w : V \to \N~\text{\em satisfies \eqref{e.LAP}} \}. \end{equation}
\end{namedtheorem}

\begin{proof}
If such $w$ exists, then any sequence $\yy$ such that $w(x) = \# \{i\,:\, \yy_i=x\}$ for all $x$ is stabilizing for $s$. The odometer is defined as $u(x) = \#\{i\,:\, \xx_i=x\}$ for a \emph{legal} stabilizing sequence $\xx$, so $w \geq u$ by part (i) of Lemma~\ref{l.subseq}.  The last line now follows from \eqref{e.stab}.
\end{proof}

The Least Action Principle expresses the odometer as the solution to a variational problem \eqref{e.variational}.
In the next section we will see that the same problem, without the integrality constraint on $w$, arises from a variant of the sandpile which will be easier to analyze.

\section{Relaxing Integrality: The Divisible Sandpile}

Let $\Z^d$ be the set of points with integer coordinates in $d$-dimensional Euclidean space $\R^d$, and let $\basis_1,\ldots,\basis_d$ be its standard basis vectors.  We view $\Z^d$ as a graph in which points $x$ and $y$ are adjacent if and only if $x-y = \pm \basis_i$ for some $i$. For example, when $d=1$ this graph is an infinite path, and when $d=2$ it is an infinite square grid.

In the divisible sandpile model, each point $x \in \Z^d$ has a continuous amount of mass $\sigma(x) \in \R_{\geq 0}$ instead of a discrete number of particles. Start with mass $m$ at the origin and zero elsewhere. At each time step, choose a site $x \in \Z^d$ with mass $\sigma(x)>1$ where $\sigma$ is the current configuration, and distribute the excess mass $\sigma(x)-1$ equally among the $2d$ neighbors of~$x$.  We call this a \emph{toppling}.
Suppose that these choices are sufficiently \textbf{thorough} in the sense that whenever a site attains mass $>1$, it is eventually chosen for toppling at some later time.  Then we have the following version of the abelian property.

\begin{lemma}
\label{l.stabilize}
For any initial $\sigma_0 : \Z^d \to \R$ with finite total mass, and any thorough sequence of topplings, the mass function converges pointwise to a function $\sigma_\infty : \Z^d \to \R$ satisfying $0 \leq \sigma_\infty \leq 1$.
Any site~$z$ satisfying $\sigma_0(z)<\sigma_\infty (z)<1$ has a neighboring site~$y$ satisfying $\sigma_\infty (y)=1$.
\end{lemma}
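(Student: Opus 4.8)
The plan is to exploit the monotonicity inherent in divisible toppling together with a mass-conservation argument, and then to analyze the ``least amount of mass each site ever holds.'' First I would record the elementary but crucial monotonicity fact: once a site $x$ topples, its mass becomes exactly $1$, and thereafter its mass can only increase when a neighbor topples into it; in particular, a site that has toppled at least once always carries mass $\geq 0$ and, more importantly, the cumulative odometer (total mass emitted from each site) is nondecreasing in time. Combined with finite total mass conservation $\sum_x \sigma_t(x) = \sum_x \sigma_0(x) =: M$ for all $t$, a standard argument shows the odometer stays bounded: if site $x$ emitted mass $e_t(x)$ by time $t$, then the discrete Laplacian of $e_t$ is $\sigma_t - \sigma_0$, which is bounded in $\ell^1$; comparing with the Green function of the walk (transient for $d\geq 3$; for $d=1,2$ one uses that mass that travels far is negligible, or argues on a large box with absorbing boundary) forces $e_t(x) \uparrow u(x) < \infty$ pointwise. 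Then $\sigma_t = \sigma_0 + \Delta e_t \to \sigma_\infty := \sigma_0 + \Delta u$ pointwise. Thoroughness guarantees no site is ``stuck'' above $1$ forever, so $\sigma_\infty \leq 1$; and $\sigma_\infty \geq 0$ because mass is never negative at any finite time and the limit of nonnegative numbers is nonnegative.

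For the second assertion, fix $z$ with $\sigma_0(z) < \sigma_\infty(z) < 1$. The strict inequality $\sigma_0(z) < \sigma_\infty(z)$ means $z$ received strictly positive mass from outside, so at least one neighbor of $z$ toppled at some point; in fact I claim $z$ itself never topples. Indeed, if $z$ toppled then at the first such time its mass would be set to exactly $1$, and since $\sigma_0(z) < 1$ this requires $z$ to have received positive mass from a neighbor beforehand — fine, that is consistent — but after toppling, $z$'s mass is $1$ and can only grow, so in the limit $\sigma_\infty(z) \geq 1$, contradicting $\sigma_\infty(z) < 1$. Hence $z$ never topples, so $u(z) = 0$, and therefore
\[
\sigma_\infty(z) = \sigma_0(z) + \Delta u(z) = \sigma_0(z) + \sum_{y \sim z} u(y).
\]
Since $\sigma_\infty(z) > \sigma_0(z)$, some neighbor $y \sim z$ has $u(y) > 0$, i.e.\ $y$ toppled at least once. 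At the last time $y$ topples, its mass is reset to $1$, and thereafter $y$'s mass never changes again (it would only change if $y$ or a neighbor topples; a neighbor toppling into $y$ would only increase it, pushing $y$ above $1$ and, by thoroughness, forcing $y$ to topple again, contradicting ``last time''). Hence $\sigma_\infty(y) = 1$, as required.

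The main obstacle I anticipate is the ``no-escape-to-infinity'' step needed to conclude $u(z) < \infty$ and the pointwise convergence $\sigma_t \to \sigma_\infty$ in low dimensions $d=1,2$, where the simple random walk is recurrent and a naive Green-function bound diverges. I would handle this by a truncation/comparison argument: run the dynamics, but observe that after any finite time only finitely much mass has moved and it occupies a (random but finite) set; comparing with the divisible sandpile restricted to a large ball $B_R$ with mass absorbed at $\partial B_R$ (whose stabilization is easy to control since the state space is effectively finite-dimensional and monotone) and letting $R \to \infty$ gives the bound. The remaining subtlety — that the limit is independent of the thorough toppling sequence — follows from the same monotonicity/least-action reasoning as in Lemma~\ref{l.subseq} and the Least Action Principle, now without the integrality constraint; I would remark that $u$ is characterized as the smallest nonnegative $w$ with $\sigma_0 + \Delta w \leq 1$, which pins down $\sigma_\infty$ uniquely.
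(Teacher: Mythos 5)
Your overall strategy matches the paper on the high-level skeleton (monotone odometer $u_k \uparrow u_\infty$, reduce to showing $u_\infty < \infty$ pointwise, then thoroughness gives $\sigma_\infty \leq 1$), and your treatment of the second assertion is correct. But the crucial boundedness step takes a genuinely different, and as written incomplete, route from the paper.

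The paper proves $u_\infty < \infty$ via a \emph{quadratic weight} argument that works uniformly in all dimensions: it considers
\[
Q(\sigma_k) := \sum_{x \in \Z^d} (\sigma_k(x) - \sigma_0(x))\,|x|^2 = \sum_{x} u_k(x),
\]
where the equality holds because $Q$ increases by $h$ whenever mass $h$ is toppled (this uses $\frac{1}{2d}\Delta|x|^2 \equiv 1$). The key observation bounding $Q$ is the same one the lemma's second assertion is built on: every site $z$ with $\sigma_k(z) > \sigma_0(z)$ has a neighbor $y$ with $\sigma_k(y) \geq 1$, and the set $\{\sigma_k \geq 1\}$ is connected, contains $\zero$, and has cardinality at most the total mass, hence is contained in a fixed ball. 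So the positive contributions to $Q$ come from a bounded region and $\sup_k Q(\sigma_k) < \infty$. Your Green-function sketch has a gap even in $d \geq 3$: knowing $\|\sigma_k - \sigma_0\|_{\ell^1} \leq 2M$ does not give a uniform bound on $(g * (\sigma_k - \sigma_0))$ unless you also control where the support of $\sigma_k - \sigma_0$ lives — which is precisely the missing observation above. And the $d = 1,2$ truncation you propose is vague and would, I think, end up needing the same insight to show the finite-box odometers are bounded uniformly in the box size. The quadratic weight is the cleaner package; it avoids the dimension split entirely.

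On the second assertion, your reasoning is correct but slightly overcomplicated. You invoke ``the last time $y$ topples,'' which tacitly assumes $y$ topples only finitely often; $u_\infty(y) < \infty$ alone does not immediately rule out infinitely many topplings with summable excesses. This is easily patched: once $y$ topples a single time, $\sigma_k(y) \geq 1$ for all subsequent $k$ (its mass only decreases by toppling, to exactly $1$, and only increases otherwise). Combined with $\sigma_\infty \leq 1$ from part one, this gives $\sigma_\infty(y) = 1$ without needing a last toppling time. The paper's route is even more implicit: it records the statement ``every $z$ with $\sigma_k(z) > \sigma_0(z)$ has a neighbor $y$ with $\sigma_k(y) \geq 1$'' as part of the boundedness argument, and the second assertion drops out by combining this with $\sigma_\infty \leq 1$ and the monotonicity just described. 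Your direct computation via $\Delta u(z) > 0$ is a nice alternative phrasing of the same fact. (Minor normalization note: the relation should read $\sigma_k = \sigma_0 + \frac{1}{2d}\Delta u_k$ with the paper's definition of $u_k$ as mass emitted; the $\frac{1}{2d}$ is also dropped in the paper's displayed proof, but it is present in the Least Action Principle for the divisible sandpile that follows.)
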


\begin{proof}
Let $u_k(x)$ be the total amount of mass emitted from $x$ during the first $k$ topplings, and let $\sigma_k = \sigma_0 + \Delta u_k$ be the resulting mass configuration.
Since $u_k$ is increasing in $k$, we have $u_k \uparrow u_\infty$ for some $u_\infty : V \to [0,\infty]$.  To rule out the value $\infty$, consider the \emph{quadratic weight}
	\[ Q(\sigma_k) := \sum_{x \in \Z^d} (\sigma_k(x) - \sigma_0(x)) |x|^2 = \sum_{x \in \Z^d} u_k(x). \]
To see the second equality, note that $Q$ increases by $h$ every time we topple mass $h$. The set $\{\sigma_k \geq 1\}$ is connected and contains $\zero$, and has cardinality bounded by the total mass of $\sigma_0$, so it is bounded. Moreover, every site $z$ with $\sigma_k(z)>\sigma_0(z)$ has a neighbor $y$ with $\sigma_{k}(y)\geq 1$. Hence $\sup_k Q(\sigma_k) < \infty$, which shows that $u_\infty$ is bounded.

Finally, $\sigma_\infty := \lim \sigma_k = \lim (\sigma_0 + \Delta u_k) = \sigma_0 + \Delta u_\infty$. By thoroughness, for each $x \in \Z^d$ we have $\sigma_k(x) \leq 1$ for infinitely many $k$, so $\sigma_\infty \leq 1$.
\end{proof}

The picture is thus of a set of ``filled'' sites ($\sigma_\infty(z)=1$) bordered by a strip of partially filled sites ($\sigma_0(z) < \sigma_\infty(z)<1$).  Every partially filled site has a filled neighbor, so the thickness of this border strip is only one lattice spacing.  Think of pouring maple syrup over a waffle: most squares receiving syrup fill up completely and then begin spilling over into neighboring squares.  On the boundary of the region of filled squares is a strip of squares that fill up only partially (Figure~\ref{f.divsand}).

The limit $u_\infty$ is called the \textbf{odometer} of $\sigma_0$. The preceding proof did not show that $u_\infty$ and $\sigma_\infty$ are independent of the thorough toppling sequence. This is a consequence of the next result.

\begin{namedtheorem}[Least Action Principle For The Divisible Sandpile] \label{divLAP}
For any $\sigma_0 : \Z^2 \to [0,\infty)$ with finite total mass, and any $w: V \to [0,\infty)$ such that
	\begin{equation} \label{e.divLAP} \sigma + \frac{1}{2d} \Delta w \leq 1 \end{equation}
we have $w \geq u_\infty$ for any thorough toppling sequence. Thus,
	\begin{equation} \label{e.divvariational} u_\infty(x) = \inf \{ w(x) \,:\, w : V \to [0,\infty) ~\text{\em satisfies \eqref{e.divLAP}} \}. \end{equation}
\end{namedtheorem}

\begin{proof}
With the notation of the preceding proof, suppose for a contradiction that $u_k \not \leq w$ for some $k$. For the minimal such $k$, the functions $u_k$ and $u_{k-1}$ agree except at $x_k$, hence
	\[ 1 = \Bigl(\sigma + \frac{1}{2d} \Delta u_k \Bigr) (x_k) < \Bigl(\sigma + \frac{1}{2d} \Delta w \Bigr) (x_k) \leq 1\, , \]
which yields the required contradiction.
\end{proof}

\subsection{The superharmonic tablecloth}

The variational problem \eqref{e.divvariational} has an equivalent formulation:

\begin{lemma}
\label{l.twofamilies}
Let $\gamma : \Z^d \to \R$ satisfy $\frac{1}{2d} \Delta \gamma = \sigma_0 -1$. Then the odometer $u$ of \eqref{e.divvariational} is given by
	\[ u = s - \gamma \]
where
	\begin{equation} \label{e.majorant} s(x) = \inf \{ f(x) \,|\, f \geq \gamma \text{ and } \Delta f \leq 0 \}. \end{equation}
\end{lemma}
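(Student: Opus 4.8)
The plan is to recognize Lemma~\ref{l.twofamilies} as nothing more than a change of variables in the variational formula \eqref{e.divvariational}. Since $\gamma$ is one particular solution of the inhomogeneous equation $\frac{1}{2d}\Delta\gamma=\sigma_0-1$, subtracting it off ought to turn the ``sandpile'' constraint \eqref{e.divLAP} into the homogeneous superharmonicity constraint of \eqref{e.majorant}; that is, $s$ should be precisely the least superharmonic majorant of the obstacle $\gamma$, and $u=s-\gamma$ the corresponding odometer of the obstacle problem.

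To carry this out I would substitute $f:=w+\gamma$ into \eqref{e.divvariational}. Replacing $\sigma_0$ by $1+\frac{1}{2d}\Delta\gamma$, the inequality $\sigma_0+\frac{1}{2d}\Delta w\leq 1$ becomes $\frac{1}{2d}\Delta(w+\gamma)\leq 0$, equivalently (as $2d>0$) $\Delta(w+\gamma)\leq 0$; and the constraint $w\geq 0$ becomes $w+\gamma\geq\gamma$. Hence $w\mapsto w+\gamma$ is a bijection, with inverse $f\mapsto f-\gamma$, from the set of nonnegative $w$ satisfying \eqref{e.divLAP} (the feasible set of \eqref{e.divvariational}) onto the set of $f$ with $f\geq\gamma$ and $\Delta f\leq 0$ (the feasible set of \eqref{e.majorant}), and it shifts the objective by $\gamma(x)$, which is fixed in the optimization. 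Taking pointwise infima on both sides gives
\[ u(x)=\inf\{w(x)\}=\inf\{f(x)\}-\gamma(x)=s(x)-\gamma(x), \]
which is the claim. Note that I need not know in advance that $s$ is itself superharmonic, nor that the infimum in \eqref{e.majorant} is attained --- only that the two feasible sets correspond under the substitution.

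The one point that needs checking --- and the closest thing here to an obstacle --- is that these infima run over a nonempty family, so that $s$, and hence $u$, is genuinely a real-valued function rather than $+\infty$. This is already available: Lemma~\ref{l.stabilize} shows that the divisible-sandpile odometer $u_\infty$ of any thorough toppling sequence is a finite function with $\sigma_\infty\leq 1$, i.e.\ a finite $w$ satisfying \eqref{e.divLAP} (which is exactly what makes the right-hand side of \eqref{e.divvariational} finite), so $f:=u_\infty+\gamma$ is an explicit finite function with $f\geq\gamma$ and $\Delta f\leq 0$. With this in hand, the rest is the routine bookkeeping of the substitution above, and the substantive ingredient --- the Least Action Principle \eqref{e.divvariational} --- has already been proved.
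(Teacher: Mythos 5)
Your proof is correct and takes essentially the same approach as the paper, which likewise observes that $f \mapsto f - \gamma$ is a bijection between the feasible sets of \eqref{e.majorant} and \eqref{e.divvariational}. The paper states this equivalence in one line; your version simply spells out the algebra of the substitution and adds the (reasonable, if not strictly required by the paper) check that the feasible set is nonempty.
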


\begin{proof}
$f$ is in the set on the right side of \eqref{e.majorant} if and only if $w := f-\gamma$ is in the set on the right side of \eqref{e.divvariational}.
\end{proof}

\begin{figure}
\centering
\includegraphics[scale=.3]{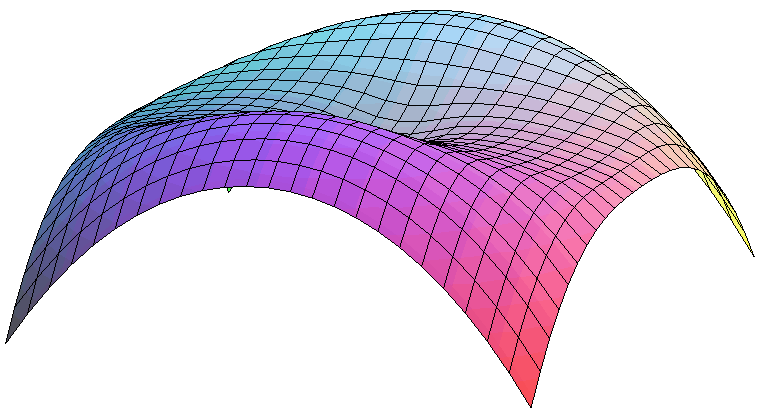} \\
\includegraphics[scale=.3]{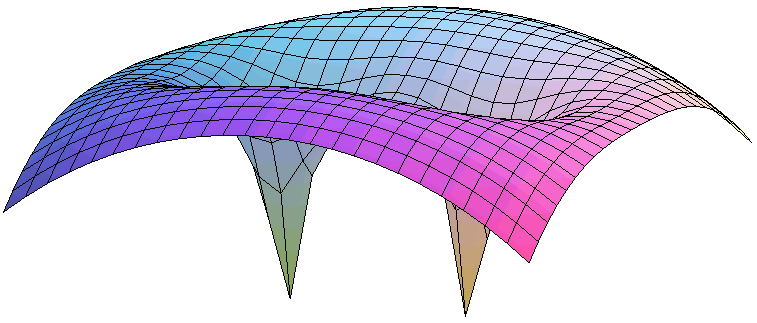}
\caption{The obstacles $\gamma$ corresponding to starting mass $1$ on each of two overlapping disks (top) and mass $100$ on each of two nonoverlapping disks.}
\end{figure}

The function $\gamma$ is sometimes called the \textbf{obstacle}, and the minimizing function $s$ in  \eqref{e.majorant} called the \textbf{solution to the obstacle problem}. To explain this terminology, imagine the graph of $\gamma$ as a fixed surface (for instance, the top of a table), and the graph of $f$ as a surface that can vary (a tablecloth). The tablecloth is constrained to stay above the table ($f \geq \gamma$) and is further constrained to be \textbf{superharmonic} ($\Delta f \leq 0$), which in particular implies that $f$ has no local minima. Depending on the shape of the table $\gamma$, these constraints may force the tablecloth to lie strictly above the table in some places.

The solution $s$ is the lowest possible position of the tablecloth. The set where strict inequality holds
	\[ D := \{x \in \Z^d \,:\, s(x)> \gamma(x) \}. \]
is called the \textbf{noncoincidence set}. In terms of the divisible sandpile, the odometer function $u$ is the gap $s-\gamma$ between tablecloth and table, and the set $\{u>0\}$ of sites that topple is the noncoincidence set.

\subsection{Building the obstacle}
\label{s.green}


The reader ought now to be wondering, given a configuration $\sigma_0 : \Z^d \to [0,\infty)$ of finite total mass, what the corresponding obstacle $\gamma : \Z^d \to \R$ looks like. The only requirement on $\gamma$ is that it has a specified discrete Laplacian, namely
	\[ \frac{1}{2d} \Delta \gamma = \sigma_0 - 1. \]
Does such $\gamma$ always exist?

Given a function $f : \Z^d \to \R$ we would like to construct a function $F$ such that $\Delta F = f$.  The most straightforward method is to assign arbitrary values for $F$ on a pair of parallel hyperplanes, from which the relation $\Delta F = f$ determines the other values of $F$ uniquely.

This method suffers from the drawback that the growth rate of $F$ is hard to control.
A better method uses what is called the \textbf{Green function} or \textbf{fundamental solution} for the discrete Laplacian $\Delta$.  This is a certain function $g : \Z^d \to \R$ whose discrete Laplacian is zero except at the origin.
	\begin{equation}  \label{e.laplacianofgreen} \frac{1}{2d} \Delta g (x) = -\delta_\zero(x) = \begin{cases} -1 & x=\zero \\ 0 & x \neq \zero. \end{cases} \end{equation}
If $f$ has finite support, then we can construct $F$ as a convolution
	\[ F(x) = - f * g := - \sum_{y \in \Z^d} f(y) g(x-y) \]
in which only finitely many terms are nonzero.  (The condition that $f$ has finite support can be relaxed to fast decay of $f(x)$ as $|x| \to \infty$, but we will not pursue this.)  Then for all $x \in \Z^d$ we have
	\[ \Delta F(x) = \sum_{y \in \Z^d} f(y) \delta_\zero(x-y) = f(x) \]
as desired.  By controlling the growth rate of the Green function $g$, we can control the growth rate of $F$.  The minus sign in equation \eqref{e.laplacianofgreen} is a convention: as we will now see, with this sign convention $g$ has a natural definition in terms of random walk.

Let $\xi_1, \xi_2, \ldots$ be a sequence of independent random variables each with the uniform distribution on the set
	$  \Basis = \{\pm \basis_1, \ldots, \pm \basis_d\}. $
For $x \in \Z^d$, the sequence
	\[ X_n = \xi_1 + \ldots + \xi_n, \qquad n \geq 0 \]
is called \textbf{simple random walk} started from the origin in $\Z^d$: it is the location of a walker who has wandered from $\zero$ by taking $n$ independent random steps, choosing each of the $2d$ coordinate directions $\pm \basis_i$ with equal probability $1/2d$ at each step.

In dimensions $d \geq 3$ the simple random walk is \textbf{transient}: its expected number of returns to the origin is finite. In these dimensions we define
	\[ g(x) := \sum_{n \geq 0} \PP(X_n = x), \]
a function known as the \textbf{Green function} of $\Z^d$.  It is the expected number of visits to~$x$ by a simple random walk started at the origin in~$\Z^d$.
The identity
	\begin{equation} \label{e.dirac} -\frac{1}{2d} \Delta g = \delta_\zero \end{equation}
is proved by conditioning on the first step $X_1$ of the walk:
	\begin{align*} g(x) &= P(X_0=x) + \sum_{n \geq 1} \sum_{e \in \Basis} P(X_n=x| X_1=e) P(X_1=e). \\
				&= \delta_\zero(x) + \sum_{n \geq 1} \sum_{e \in \Basis} P(X_{n-1}=x-e) \frac{1}{2d} \end{align*}
Interchanging the order of summation, the second term on the right equals $\frac{1}{2d} \sum_{y \sim x} g(y)$, and \eqref{e.dirac} now follows by the definition of the Laplacian $\Delta$.
	
The case $d = 2$ is more delicate because the simple random walk is \textbf{recurrent}: with probability $1$ it visits $x$ infinitely often, so the sum defining $g(x)$ diverges.  In this case, $g$ is defined instead as
	\[ g(x) = \sum_{n \geq 0} \left( \PP(X_n=x) - \PP(X_n=\zero) \right). \]
 One can show that this sum converges and that the resulting function $g : \Z^2 \to \R$ satisfies \eqref{e.dirac}; see \cite{Spitzer}. The function $-g$ is called the \textbf{recurrent potential kernel} of $\Z^2$.

Convolving with the Green function enables us to construct functions on $\Z^d$ whose discrete Laplacian is any given function with finite support.
But we want more: In Lemma~\ref{l.twofamilies} we seek a function~$\gamma$ satisfying $\Delta \gamma = \sigma-1$, where $\sigma$ has finite support.  Fortunately, there is a very nice function whose discrete Laplacian is a constant function, namely the squared Euclidean norm
	\[  q(x) = |x|^2 := \sum_{i=1}^d x_i^2. \]
(In fact, we implicitly used the identity $\frac{1}{2d} \Delta q \equiv 1$ in the quadratic weight argument for Lemma~\ref{l.stabilize}.) We can therefore take as our obstacle the function
	\begin{equation} \label{eq:thediscreteobstacle} \gamma = - q - (g*\sigma). \end{equation}
In order to determine what happens when we drape a superharmonic tablecloth over this particular table $\gamma$, we should figure out what $\gamma$ looks like!  In particular, we would like to know the asymptotic order of the Green function $g(x)$ when $x$ is far from the origin.  It turns out \cite{FU,KS,Uchiyama} that
	\begin{equation} \label{e.greenasymp} g(x) =  \green(x) + O(|x|^{-d}) \end{equation}
where $\green$ is the spherically symmetric function
	\begin{equation} \label{e.greenformula} \green (x) := \begin{cases} -\frac{2}{\pi} \log |x| - a_2, & d=2; \\ a_d |x|^{2-d}, & d\geq 3. \end{cases} \end{equation}
(For $d \geq 3$ the constant $a_d = \frac{2}{(d-2)\omega_d}$ where $\omega_d$ is the volume of the unit ball in $\R^d$. The constant $a_2 = \frac{2\gamma+\log 8}{\pi}$, where $\gamma$ is Euler's constant.)
As we will now see, this estimate in combination with $-\frac{1}{2d} \Delta g = \delta_\zero$ is a powerful package. We start by analyzing the initial condition $\sigma = m\delta_\zero$ for large $m$.

\subsection{Point sources}

\begin{figure}
\centering
\includegraphics[width=.6\textwidth]{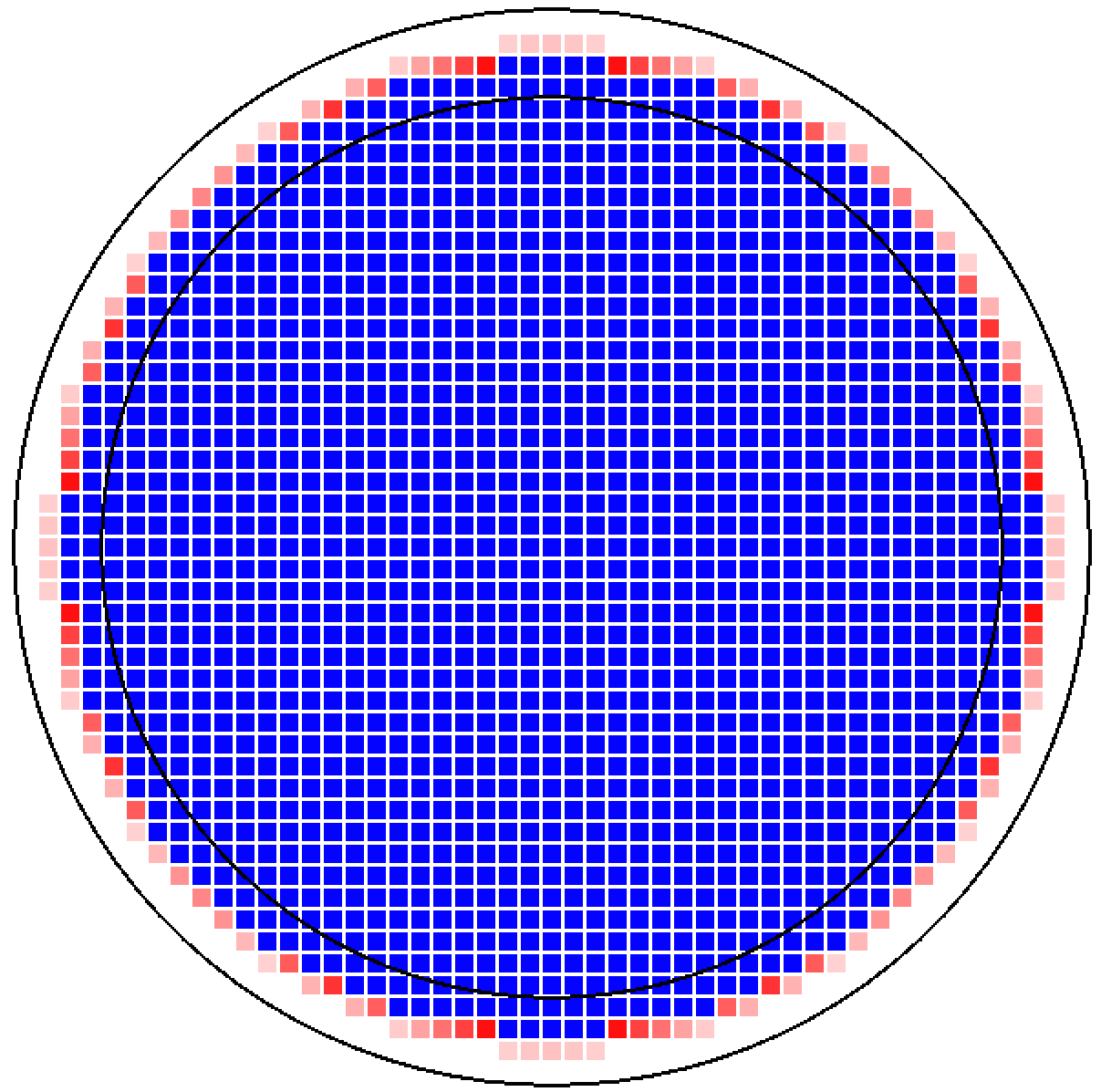}
\caption{Divisible sandpile in $\Z^2$ started from mass $m=1600$ at the origin. Each square is colored blue if it fills completely, red if it fills only partially. All red squares are contained in a thin annulus centered at the origin of radii $r \pm c$ where $\pi r^2 = m$. This is illustrated above with  $c=2$.
}
\label{f.divsand}
\end{figure}

Pour $m$ grams of maple syrup into the center square of a very large waffle. Each square can hold just $1$ gram of syrup before it overflows, distributing the excess equally among the four neighboring squares. What is the shape of the resulting set of squares that fill up with syrup?

Figure~\ref{f.divsand} suggests the answer is ``very close to a disk''. Being mathematicians, we wish to quantify ``very close'', and why stop at two-dimensional waffles? Let $B(\zero,r)$ be the Euclidean ball of radius $r$ centered at the origin in $\R^d$.

\begin{theorem} \cite{LP09} \label{t.divsand}
Let $D_m = \{\sigma_\infty =1 \}$ be the set of fully occupied sites for the divisible sandpile started from mass $m$ at the origin in $\Z^d$. There is a constant $c=c(d)$, such that
	\[ B(\zero,r-c) \cap \Z^d \subset D_m \subset B(\zero,r+c) \]
where $r$ is such that $B(\zero,r)$ has volume $m$. Moreover, the odometer $u_\infty$ satisfies
	\begin{equation} \label{e.pointsourceodomest} u_\infty(x) = mg(x) + |x|^2 - mg(r\basis_1) - r^2 +O(1) \end{equation}
for all $x \in B(\zero,r+c) \cap \Z^d$, where the constant in the $O$ depends only on $d$.
\end{theorem}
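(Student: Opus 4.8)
The plan is to use the variational characterization of the odometer from the Least Action Principle for the Divisible Sandpile together with the explicit obstacle $\gamma = -q - (g*\sigma)$ from Lemma~\ref{l.twofamilies}. For $\sigma = m\delta_{\zero}$ the obstacle becomes $\gamma(x) = -|x|^2 - m g(x)$, so by Lemma~\ref{l.twofamilies} the odometer is $u_\infty = s - \gamma$ where $s$ is the smallest superharmonic majorant of $\gamma$. The heuristic is that on a Euclidean ball $B(\zero,\rho)$ the function $g$ is (up to $O(|x|^{-d})$) the radial harmonic function $\green$, so $\gamma$ there looks like $-|x|^2 - m\green(x)$; outside that ball $\gamma$ should coincide with its superharmonic majorant. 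The candidate value of $\rho$ is exactly $r$, since that is where the ``parabola'' $-|x|^2$ and the harmonic piece balance so that the resulting $s$ is both superharmonic everywhere and equal to $\gamma$ outside. So the proof splits into producing a sharp upper bound on $u_\infty$ (which by the LAP only requires exhibiting a single admissible $w$) and a matching lower bound.

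First I would construct the test function for the upper bound. Define $w(x) := m g(x) + |x|^2 - m g(r\basis_1) - r^2$ for $x$ inside roughly $B(\zero,r)$, and set $w = 0$ outside; one then checks $\frac{1}{2d}\Delta w = -m\delta_{\zero} + 1 - 1 = 0$ away from the origin and the boundary, so that $\sigma + \frac{1}{2d}\Delta w \le 1$ holds in the interior, while near the free boundary one uses \eqref{e.greenasymp} to see that $w$ and its discrete Laplacian have the right sign. Because \eqref{e.greenasymp} gives $g = \green + O(|x|^{-d})$ with $\green$ radial, $w(x)$ as a function of $|x|$ behaves like $m\green(|x|) + |x|^2 + \text{const}$, which is decreasing then one checks it hits $0$ within $O(1)$ of radius $r$; truncating at the correct integer level and absorbing the error into an additive $O(1)$ correction to $w$ (to keep $w \ge 0$ and preserve the subharmonicity inequality at the interface) gives an admissible $w$, hence $u_\infty \le w \le mg(x)+|x|^2 - mg(r\basis_1) - r^2 + O(1)$ on $B(\zero,r+c)$. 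This also gives $D_m \subset B(\zero,r+c)$ since $u_\infty = 0$ forces $x \notin D_m$ (a site topples iff it lies in the noncoincidence set).

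For the lower bound and the inner inclusion $B(\zero,r-c)\cap\Z^d \subset D_m$, I would argue that if $D_m$ failed to contain some lattice point at radius $r-c$, then by Lemma~\ref{l.stabilize} the total occupied mass $\sum(\sigma_\infty)$ — which equals $m$ — would be strictly less than the volume of $B(\zero,r)\cap\Z^d$ up to $O(r^{d-1})$ surface error, and the isoperimetric/connectedness structure of $D_m$ (it is connected and contains $\zero$) would be violated for $c$ large; more carefully, one uses that $u_\infty$ restricted to $D_m$ is superharmonic-minus-$\gamma = 0$ on $\partial D_m$ and solves $\frac{1}{2d}\Delta u_\infty = 1 - m\delta_{\zero}$ inside, so comparing $u_\infty$ with the explicit function $w$ above on $D_m$ via the maximum principle pins $u_\infty$ to within $O(1)$ and forces $\partial D_m$ to lie within $O(1)$ of $\{|x|=r\}$. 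The main obstacle is the boundary analysis: controlling the transition region where $\gamma$ meets $s$, showing the discrete superharmonicity inequality survives the $O(|x|^{-d})$ discrepancy between $g$ and $\green$ plus lattice-discretization effects, and verifying that the free boundary is genuinely within an $O(1)$ (not $O(\log r)$ or $O(r^{\alpha})$) annulus of radius $r$ — this is where the sharp asymptotics \eqref{e.greenasymp}, rather than just the leading term, are essential.
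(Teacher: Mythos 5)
Your overall framework matches the paper's sketch: use Lemma~\ref{l.twofamilies} with the explicit obstacle $\gamma(x) = -|x|^2 - mg(x)$, observe that $\gamma$ is nearly radial and maximized near $|x|=r$ (via the Green function asymptotics \eqref{e.greenasymp}), and conclude that the odometer is close to $\gamma(r\basis_1) - \gamma(x) = mg(x)+|x|^2 - mg(r\basis_1) - r^2$. That is the right idea and the right set of inputs.

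However, the upper-bound step has a genuine gap. The function $w_0(x) := mg(x)+|x|^2 - mg(r\basis_1) - r^2$ does not change sign near $|x|=r$: since $\frac{1}{2d}\Delta w_0 = -m\delta_\zero + 1$ (note: not $-m\delta_\zero+1-1=0$ as you wrote; the constant terms have zero Laplacian), $w_0$ has a quadratic minimum at $|x|\approx r$ where $w_0 \approx 0$ with vanishing radial derivative and second radial derivative $\approx 2d$. So if you set $w = w_0$ on $B(\zero,r)$ and $w=0$ outside, then at a site $x$ just outside the ball you get $\frac{1}{2d}\Delta w(x) = \frac{1}{2d}\sum_{y\sim x,\,y\in B(\zero,r)} w_0(y)$, and since $w_0(y)$ is of order $d$ at sites one lattice step inside (from the second-derivative estimate), this can be of order $d$ --- exceeding $1$ for $d\ge 2$, so the admissibility condition \eqref{e.divLAP} fails there. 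This cannot be repaired by an ``additive $O(1)$ correction to $w$'' because $\Delta(w+c)=\Delta w$. The argument in \cite{LP09} (which this paper summarizes) instead works directly with the least superharmonic majorant $s$, showing $s$ is nearly the constant $\max\gamma$, rather than exhibiting an explicit admissible $w$. Similarly, your first idea for the inner bound --- counting mass --- is a dead end, since mass conservation alone ($|D_m| = m + O(r^{d-1})$) is compatible with an elongated $D_m$; the maximum-principle comparison you pivot to is the right idea, but it carries the real content and would need to be written out to locate the free boundary within $O(1)$ of $|x|=r$.
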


The idea of the proof is to use Lemma~\ref{l.twofamilies} to write the odometer function as
	\[ u_\infty = s-\gamma \]
for an obstacle $\gamma$ with discrete Laplacian $\frac{1}{2d} \Delta \gamma = m\delta_\zero-1$.  What does such an obstacle look like?

\old{
To build one, it suffices to find functions $g$ and $q$ with $\frac{1}{2d} \Delta g = -\delta_\zero$ and $\frac{1}{2d} \Delta q \equiv 1$, for then we can take
	\[ \gamma = -mg - q. \]
A direct calculation shows that the Euclidean norm $q(x)=|x|^2$ has $\frac{1}{2d}\Delta q \equiv 1$ (in fact, we implicitly used this identity already, in the quadratic weight argument for Lemma~\ref{l.stabilize}).
}

Recalling that the Euclidean norm $|x|^2$ and the discrete Green function $g$ have discrete Laplacians $1$ and $-\delta_\zero$, respectively, a natural choice of obstacle is
	\begin{equation} \label{e.pointsourceobstacle} \gamma(x) = -|x|^2 - mg(x). \end{equation}
The claim of \eqref{e.pointsourceodomest} is that $u(x)$ is within an additive constant of $\gamma(r\basis_1)-\gamma(x)$.  To prove this one uses two properties of $\gamma$: it is nearly spherically symmetric (because $g$ is!) and it is maximized near $|x|=r$. From these properties one deduces that $s$ is nearly a constant function, and that $\{s>\gamma\}$ is nearly the ball $B(\zero,r) \cap \Z^d$.

The Euclidean ball as a limit shape is an example of \textbf{universality}: Although our topplings took place on the cubic lattice $\Z^d$, if we take the total mass $m \to \infty$ while zooming out so that the cubes of the lattice become infinitely small, the divisible sandpile assumes a perfectly spherical limit shape.  Figure~\ref{f.singlesource} strongly suggests that the abelian sandpile, with its indivisible grains of sand, does \emph{not} enjoy such universality. However, discrete particles are not incompatible with universality, as the next two examples show.

\section{Internal DLA}

\begin{figure}[h]
\centering
\includegraphics[height=.41\textheight]{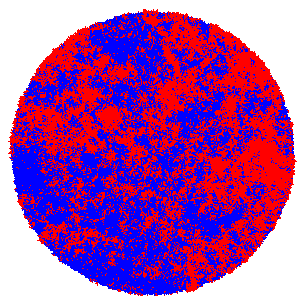}
\caption{An internal DLA cluster in $\Z^2$.  The colors indicate whether a point was added to the cluster earlier or later than expected: the random site $x(j)$ where the $j$-th particle stops is colored red if $\pi |x(j)|^2 > j$, blue otherwise.}
\end{figure}

Let $m \geq 1$ be an integer.  Starting with $m$ particles at the origin in the $d$-dimensional integer lattice $\Z^d$, let each particle in turn perform a simple random walk until reaching an unoccupied site; that is, the particle repeatedly jumps to an nearest neighbor chosen independently and uniformly at random, until it lands on a site containing no other particles.

This procedure, known as \emph{internal DLA}, was proposed by Meakin and Deutch \cite{MD86} and independently by Diaconis and Fulton \cite{DF91}. It produces a random set $I_m$ of $m$ occupied sites in $\Z^d$.  This random set is close to a ball, in the following sense.
Let $r$ be such that the Euclidean ball $B(\zero,r)$ of radius $r$ has volume $m$.  Lawler, Bramson and Griffeath~\cite{LBG92} proved that for any $\epsilon>0$, with probability~$1$ it holds that
	\[ B(\zero,(1-\epsilon)r) \cap \Z^d \subset I_m \subset B(\zero, (1+\epsilon)r)
	\qquad \mbox{for all sufficiently large $m$}. \]
A sequence of improvements followed, showing that in dimensions $d \geq 2$ the fluctuations of $I_m$ around $B(\zero,r)$ are logarithmic in $r$ \cite{Lawler95,AG1,AG2,AG3,JLS1,JLS2,JLS3}.

\section{Rotor-routing: derandomized random walk}

\begin{figure}[h]
\centering
\includegraphics[height=.6\textheight]{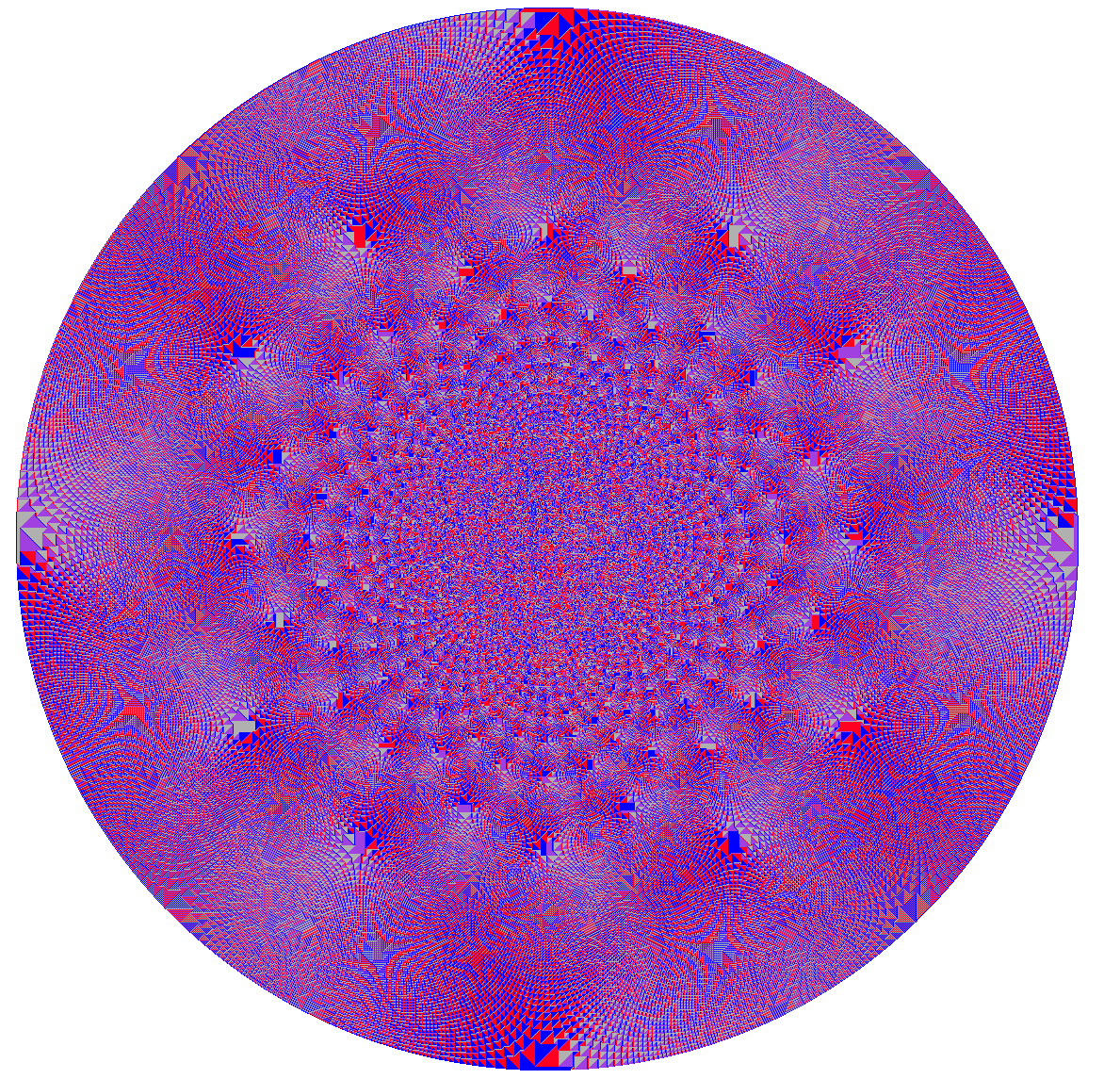}
\caption{Rotor aggregate of one million particles started at the origin in~$\Z^2$. All rotors initially pointed North and followed the \textbf{clockwise mechanism}: each rotor repeatedly cycles through the directions North, East, South, West.
Each pixel represents a site of $\Z^2$, and its color indicates the final direction of its rotor.
}
\label{fig:rotor1m}
\end{figure}

In a \textbf{rotor walk} on a graph, each vertex $v$ serves its neighbors in a prescribed periodic sequence. This periodic sequence is called the \textbf{rotor mechanism} at $v$. We say that the rotor mechanism at $v$ is \textbf{simple} if each neighbor of $v$ occurs exactly once per period.  To visualize a rotor walk, label each vertex by an arrow (``rotor'') pointing toward one of its neighbors. At each time step, the walker first advances the rotor at its current location to point to the next neighbor in the periodic sequence, and then the walker moves to that neighbor.

Rotor walk has been studied in \cite{WLB96} as a model of mobile agents exploring a territory, and in \cite{PDDK96} as a model of self-organized criticality. Propp \cite{propp} proposed rotor walk as a derandomization of random walk, a perspective explored in \cite{CS06,HP10}.

In \textbf{rotor aggregation}, we start with $n$ walkers at the origin; each in turn performs rotor-router walk until it reaches a site not occupied by any other walkers.  Importantly, we do not reset the rotors between walks!  Let $R_n$ denote the resulting region of $n$ occupied sites. For example, in $\Z^2$ with the clockwise rotor mechanism whose fundamental period is North, East South West, the sequence will begin $R_1 = \{\zero\}$, $R_2 = \{\zero,\basis_1\}$, $R_3 = \{\zero,\basis_1,-\basis_2\}$.  The region $R_{10^6}$ is pictured in \figref{rotor1m}.

\begin{theorem}
\label{t.rotorlog}
There is a constant $C$ depending only on the dimension $d$, such that for any initial rotor configuration and any simple rotor mechanism on $\Z^d$, the rotor aggregate $R_n$ formed from $n= \floor{\omega_d r^d}$ particles started at $\zero$ satisfies
	\begin{equation} \label{e.rotorlog} B(\zero,r-C\log r) \cap \Z^d \subset R_{n} \subset B(\zero,r+C\log r). \end{equation}
\end{theorem}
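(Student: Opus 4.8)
The plan is to analyze the \emph{rotor odometer} $u$, where $u(x)$ is the total number of times a walker leaves $x$ during the formation of $R_n$, and to transfer the sharp divisible-sandpile estimate of Theorem~\ref{t.divsand} to the rotor model, losing only a logarithmic factor in the passage from continuous mass to rotor-routed particles. The first step is the odometer identity. By the abelian property of rotor walk, $R_n$ and $u$ are independent of the order in which particles move; writing $N_{xy}$ for the total number of walker-crossings of the directed edge $(x,y)$, particle conservation at each site reads
	\[ \mathbbm{1}_{R_n}(x) = n\,\delta_{\zero}(x) + \sum_{y\sim x} N_{yx} - u(x). \]
Since the rotor mechanism is simple, a site that is exited $u(x)$ times feeds each of its $2d$ neighbours either $\floor{u(x)/(2d)}$ or $\ceiling{u(x)/(2d)}$ times, so $N_{xy} = \tfrac{1}{2d}u(x) + \theta_{xy}$ with $|\theta_{xy}|<1$ and $\sum_{y\sim x}\theta_{xy}=0$; substituting yields
	\[ \tfrac{1}{2d}\Delta u = \mathbbm{1}_{R_n} - n\,\delta_{\zero} - \div\kappa, \qquad \div\kappa(x) := \sum_{y\sim x}\kappa(x,y), \]
where $\kappa(x,y):=\theta_{yx}-\theta_{xy}$ is an \emph{antisymmetric} edge field with $|\kappa|\le 1$. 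The structural point --- and the reason rotor aggregation does much better than a model with an arbitrary bounded perturbation --- is that the error is the divergence of a \emph{bounded} edge field, hence enjoys cancellation when summed over regions.

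Next I would introduce an explicit comparison function. For $n=m=\floor{\omega_d r^d}$ particles at the origin, Theorem~\ref{t.divsand} gives the divisible-sandpile odometer $u_\infty = W + O(1)$ on $B(\zero,r+c)$, where
	\[ W(x) := m\,g(x) + |x|^2 - m\,g(r\basis_1) - r^2, \qquad \tfrac{1}{2d}\Delta W = 1 - m\,\delta_{\zero} \]
(using $\tfrac{1}{2d}\Delta|x|^2\equiv 1$ and \eqref{e.laplacianofgreen}). Feeding in the Green-function asymptotics \eqref{e.greenasymp}--\eqref{e.greenformula}, one checks that $W$ is nearly radial, is nonnegative up to an additive constant, and vanishes to second order on the sphere $|x|=r$ (its radial second derivative there equals $2d$ in every dimension), so $W(x)\asymp(|x|-r)^2$ for $|x|$ near $r$. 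It therefore suffices to prove two comparison estimates: (a) $u(x)\ge W(x)-O\big((\log r)^2\big)$ on $B(\zero,r+c)$, which forces occupancy of every site with $|x|\le r-C\log r$ and yields the inner inclusion of \eqref{e.rotorlog}; and (b) $u$ is dominated pointwise by the divisible-sandpile odometer started from the \emph{larger} mass $m'=m+C'r^{d-1}\log r$, whose occupied set lies in $B(\zero,r+C\log r)$ by Theorem~\ref{t.divsand}, which yields the outer inclusion. (The squared form $(\log r)^2$ in (a) is exactly what the second-order vanishing of $W$ converts into the claimed $O(\log r)$ spatial fluctuation.)

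Both estimates follow from the identity above once one can correct the rotor error, i.e. produce $w$ with $\tfrac{1}{2d}\Delta w=\div\kappa$ and $\|w\|_\infty$ small on the relevant ball: then $u-W-w$ is discrete harmonic inside $R_n$, and one runs maximum-principle arguments. For (a) one bounds $u-W-w$ below on $R_n$ by its boundary values, using $u\equiv 0$ off $R_n$ and $W=O((\log r)^2)$ on the boundary annulus. For (b) one checks, via the variational characterisation \eqref{e.divvariational} of the divisible odometer together with $\tfrac{1}{2d}\Delta u\le 1+O(1)-n\,\delta_{\zero}$, that $u$ (after the correction) is a subsolution of the $m'$-obstacle problem, so the Least Action Principle for the divisible sandpile forces $u\le u_\infty^{(m')}$. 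A short bootstrap --- first run the argument with a crude (e.g. the known power-law) shape bound to control $\partial R_n$, then iterate --- removes the circularity between (a) and (b), and the original Least Action Principle plays the analogous bookkeeping role on the sandpile side.

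The crux --- and the source of the improvement over the earlier power-law bound --- is the claim $\|w\|_\infty=O\big((\log r)^2\big)$ on a ball of radius $O(r)$ for the correction with $\tfrac{1}{2d}\Delta w=\div\kappa$. Convolving $\div\kappa$ against the \emph{free} Green function only gives $O(r)$: there are order $r^d$ edges, each carrying an $O(1)$ value of $\kappa$, and since the gradient of $g$ decays like $|x|^{1-d}$ the sum over a ball of radius $r$ is of order $r$ with no guaranteed sign cancellation. The gain must come from solving this Poisson problem on (a mild enlargement of) $R_n$ with zero boundary data, so that the kernel is the Green function of a domain close to a Euclidean ball of radius $r$, whose source-gradient, summed against a bounded edge field over the domain, is only logarithmic near the boundary --- combined with the Holroyd--Propp rotor/random-walk discrepancy estimates \cite{HP10} and the bootstrap above to keep all constants uniform over the initial rotor configuration and over all simple mechanisms. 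I expect this uniform Green-function estimate to be the main obstacle; granting it, the sub/super-solution comparisons and the conversion from the $O((\log r)^2)$ odometer bound to the spatial bound \eqref{e.rotorlog} via $W(x)\asymp(|x|-r)^2$ are routine.
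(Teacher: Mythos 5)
Your identity $\tfrac{1}{2d}\Delta u = \mathbbm{1}_{R_n} - n\delta_\zero - \div\kappa$ with $\kappa$ a bounded antisymmetric edge field is correct, and is essentially Lemma~\ref{l.odomflow} of the paper restated in divergence form. But from that point on, your route diverges sharply from the paper's, and it has a gap that is not at all ``routine'' to fill.

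The paper does \emph{not} try to correct the rotor error pointwise and compare odometers in $L^\infty$. That is the strategy of \cite{LP09}, and it is exactly what produced the weaker outer bound $r^{(d-1)/d}\log r$. Instead, the paper proves the outer bound by (1) establishing a ``no thin tentacles'' proposition --- any occupied $z_0$ has $\geq c\rho^d$ occupied sites within distance $\rho$ of it, proved by smoothing the odometer, bounding $\Delta S_k u$, and running a Caffarelli-type maximum principle argument; and (2) pairing $R_n$ against a discrete harmonic \emph{probe function} $h$ (a directional derivative of the lattice Green function, localized near a candidate tentacle tip $y$) via the Holroyd--Propp weight-conservation bound $|h[R_n]-nh(\zero)|\le\sum_{x\in R_n}\sum_{y\sim x}|h(x)-h(y)|=O(\log r)$. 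Here the cancellation you want is obtained \emph{automatically} for the single number $h[R_n]$, without ever solving a Poisson problem for $w$; and the $c\rho^d$ tentacle points, each contributing $\gtrsim\rho^{1-d}$ to $h[R_n]$, overcome the $O(\log r)$ slack, forcing $\rho=O(\log r)$. This is a fundamentally integrated (weak) estimate rather than a pointwise one, plus an amplification step.

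Your crux, $\|w\|_\infty=O((\log r)^2)$ for the solution of $\tfrac{1}{2d}\Delta w=\div\kappa$ on a domain of radius $\asymp r$ with zero boundary data, is the genuine gap. As stated it is not even the right formulation: for an adversarial bounded antisymmetric $\kappa$, the sum $\sum_z|\nabla_z G_B(x,z)|$ is $\Theta(\dist(x,\partial B))$ up to logarithms, so $w$ can be of order $r$ deep inside the domain --- what one can hope for is a \emph{boundary} estimate $|w(x)|\lesssim\dist(x,\partial B)\log r$, which would need to be proved for the Green function of a discrete domain only known to be ball-like up to the very fluctuation you are trying to bound. That circularity is real, and the ``short bootstrap'' you invoke would have to carry the entire burden. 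Separately, step (b) as written does not go through: adding $w$ gives $\tfrac{1}{2d}\Delta(u+w)=\mathbbm{1}_{R_n}-n\delta_\zero\le 1-n\delta_\zero$, which is the divisible-sandpile inequality for mass $n=m$, not $m'$, and $u+w$ need not be nonnegative, so the Least Action Principle (Theorem~\ref{divLAP}) does not apply directly; extra work is needed to convert this into domination by the odometer of a larger mass. In short, you have rediscovered the mechanism of the earlier power-law proof and are hoping a sharper correction estimate exists; the paper's actual proof sidesteps that correction entirely via probe functions, the Holroyd--Propp inequality, and the no-thin-tentacles amplification.
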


The inner bound was proved in \cite{LP09}, which also included a weaker outer bound $R_{\omega_d r^d} \subset B(\zero,r+C r^{(d-1)/d} \log r)$. The rest of this section is devoted to the proof of the stronger outer bound  \eqref{e.rotorlog}, which builds on ideas of Holroyd-Propp \cite{HP10} and Jerison-Levine-Sheffield \cite{JLS1,JLS2}.

\subsection{No thin tentacles}

The first step in the proof of Theorem~\ref{t.rotorlog} is to rule out ``thin tentacles'' on the boundary of the rotor aggregate $R_n$. Namely, according to the following proposition, for any point $z_0 \in R_n$ at least a constant fraction of the lattice sites in a ball around $z_0$ must also belong to $R_n$.
An analogous result for internal DLA appears in \cite[Lemma A]{JLS1}.

\begin{prop}
\moniker{No Thin Tentacles}
\label{p.nothintentacles}
There is a positive constant $c$ depending only on the dimension $d$, such that for any initial rotor configuration and any simple rotor mechanism in $\Z^d$, and for any $z_0 \in R_n$ and $\rho<|z_0|$, we have
	\[ \# (B(z_0,\rho) \cap R_n) \geq c \rho^d. \]
\end{prop}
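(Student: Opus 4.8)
The plan is to combine an easy reduction to the known interior of $R_n$, valid at all but the smallest scales $\rho$, with a potential‑theoretic argument in the spirit of Lemma~A of \cite{JLS1} for the rest. Throughout write $u=u_n\colon\Z^d\to\N$ for the \textbf{rotor odometer} of $R_n$, i.e.\ $u(x)$ is the number of walkers emitted from $x$. Since the rotor mechanism is simple, the number of walkers sent from $x$ across a given incident edge differs from $u(x)/(2d)$ by less than $1$, so conservation of walkers yields
\[
\tfrac{1}{2d}\,\Delta u \;=\; \mathbbm 1_{R_n} \;-\; n\,\delta_{\zero} \;+\; e,\qquad \|e\|_\infty\le C_1(d),
\]
with $u\ge 0$ everywhere and $u\equiv 0$ off $R_n$ (a site that emits a walker has been visited, hence lies in $R_n$); also $R_n$ is connected. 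By the abelian property $R_k\subseteq R_n$ for $k\le n$, so it is enough to prove the bound when $z_0$ is the site occupied by the last walker, and we may assume $\rho$ exceeds a dimensional constant (else $\#(B(z_0,\rho)\cap R_n)\ge 1\ge c\rho^d$).

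First I would handle the case where $z_0$ is not too deep into the frontier of $R_n$. Recall from \cite{LP09} the inner bound $B(\zero,r-C\log r)\cap\Z^d\subseteq R_n$ and the weak outer bound $R_n\subseteq B(\zero,\,r+Cr^{(d-1)/d}\log r)$. If $B(z_0,\rho)\subseteq B(\zero,r-C\log r)$ then $\#(B(z_0,\rho)\cap R_n)=\#(B(z_0,\rho)\cap\Z^d)\gtrsim\rho^d$ and we are done. A short Euclidean estimate gives the same conclusion whenever $\rho$ exceeds a fixed multiple of $r^{(d-1)/d}\log r$: then $\rho<|z_0|\le r+Cr^{(d-1)/d}\log r$ forces $B(z_0,\rho)$ to meet $B(\zero,r-C\log r)$ in a spherical cap carrying $\gtrsim\rho^d$ lattice points. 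The same comparison lets us also assume $\rho<|z_0|/3$, so that $B(z_0,2\rho)$ avoids the origin; what remains is the regime where $z_0$ lies within $O(r^{(d-1)/d}\log r)$ of the sphere of radius $r$ and $1\ll\rho\lesssim r^{(d-1)/d}\log r$ --- a thin protrusion that the inner bound cannot rule out.

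For that regime the plan is to quantify the heuristic that the rotor mechanism spreads walkers and so cannot sustain a thin tentacle. On $B(z_0,2\rho)$ (which avoids $\zero$) the identity above gives $|\Delta u|\le 2d(1+C_1)$, so $u$ is almost harmonic there; but $u(z_0)$ itself may be $0$, so one works instead with the walker‑visit function $h(w)=$ the number of the $n$ walkers whose path passes through $w$, which satisfies $h\ge 0$, $h\equiv 0$ off $R_n$, and $h(z_0)\ge 1$ because $z_0$ is occupied. By the Holroyd--Propp comparison of rotor walk with simple random walk \cite{HP10}, inside $R_n\cap B(z_0,2\rho)$ the function $h$ stays within controlled error of $n$ times a genuinely harmonic quantity (a hitting probability for random walk killed on leaving $R_n$), which vanishes on the holes $\Z^d\setminus R_n$. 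Assuming for contradiction that $\#(B(z_0,\rho)\cap R_n)<c\rho^d$, one runs this comparison down the dyadic balls $B(z_0,2^{-j}\rho)$, $j=0,1,2,\dots$: if the occupied density stayed below a fixed threshold at every scale, then a random walk from $z_0$ stopped on leaving $B(z_0,\rho)$ would hit the unoccupied part of some $B(z_0,2^{-j}\rho)$ --- where $h=0$ --- with high probability and only $O(1)$ passages through each scale, and telescoping the (almost) mean‑value property of $h$ over the $\sim\log\rho$ scales would drive $h(z_0)$ well below $1$, a contradiction. Hence $\#(B(z_0,\rho)\cap R_n)\ge c\rho^d$.

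The main obstacle is exactly this dyadic step: converting ``$R_n$ has density below the threshold in $B(z_0,\rho)$'' into quantitative control on the harmonic measure of the occupied set seen from $z_0$, with errors small enough to beat the factor $n$ implicit in $h$, and with all rotor‑to‑random‑walk discrepancies controlled uniformly in $n$. Mere low density is not enough (a low‑density set can still trap a bounded‑below harmonic value at an interior point, e.g.\ by omitting a ball around $z_0$); what makes the estimate go through is that $z_0$ is the last site added, so an actual walker threaded $R_n\cap B(z_0,\rho)$ to reach $z_0$, which forces the occupied set near $z_0$ to be escapable at every dyadic scale --- and the far‑field constraint $R_n\supseteq B(\zero,r-C\log r)$ pins down the rest of the geometry. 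Carrying this out is the bulk of the work and runs parallel to Lemma~A of \cite{JLS1} for internal DLA; it is the estimate that ultimately feeds into the outer bound of Theorem~\ref{t.rotorlog}.
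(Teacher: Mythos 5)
The proposal takes a genuinely different route from the paper's proof, and the route has a gap you have partly flagged yourself. The paper does not work with a ``walker-visit'' function $h$ or a dyadic harmonic-measure argument at all. Its proof is built from the odometer $u$ and proceeds in three steps: (i) a shell-increase inequality (Lemma~\ref{l.shellincrease}) is iterated to produce, within distance $\rho_0(d)$ of any $z_0\in R_n$, a point $z_1$ where the smoothed odometer $S_k u$ is large (Lemma~\ref{l.foam}); (ii) because $\frac{1}{2d}\Delta S_k u\approx 1$ inside $R_n$ (Lemma~\ref{l.smoothing}), the function $S_k u(x)-\tfrac12|x-z_1|^2$ is subharmonic on $\dball(z_1,\rho)\cap R_{n,k}$, and Caffarelli's maximum-principle argument (Lemma~\ref{l.caff}) forces a point $z_2$ with $|z_2-z_1|\ge\rho$ and $S_k u(z_2)>\tfrac12\rho^2$; (iii) a Harnack-type bound (Lemma~\ref{l.smoothsmoosh}) then shows that a large value of $S_k u$ forces $R_n$ to contain a ball of radius $\sim\rho$ around $z_2$. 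No random-walk comparison and no dyadic decomposition appear.

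The gap in your sketch is in the central step, and you name it yourself: converting ``low density of $R_n$ at all dyadic scales around $z_0$'' into the conclusion $h(z_0)<1$. Two problems make this not a routine omission. First, the quantity $h(w)=\#\{\text{walkers whose path passes through }w\}$ has no clean discrete Laplacian or mean-value property, unlike the odometer $u$; the Holroyd--Propp bound (Lemma~\ref{l.HP}) controls sums $h[R_n]$ of a \emph{fixed harmonic test function} over the cluster, not pointwise deviations of a rotor-walk visit count from a hitting probability, and the error term there scales like a sum of gradients over all of $R_n$, which is not small at the scale of a single point $z_0$. Second, even granting such a comparison, low density of $R_n\cap B(z_0,2^{-j}\rho)$ does not by itself force random walk from $z_0$ to hit the complement of $R_n$ before leaving $B(z_0,\rho)$: you note this (``mere low density is not enough'') and appeal to the fact that a walker actually threaded the tentacle to reach $z_0$, but you do not say how to turn that qualitative fact into the quantitative harmonic-measure bound needed. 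That is exactly the estimate the paper's Lemmas~\ref{l.foam}--\ref{l.caff} replace with a deterministic maximum-principle argument, avoiding probabilistic comparison entirely. Your opening reduction (monotonicity $R_k\subseteq R_n$, the range restriction on $\rho$, and the pointwise Laplacian bound $|\frac{1}{2d}\Delta u-\mathbbm 1_{R_n}+n\delta_\zero|\le C_1(d)$ away from $\zero$) is correct and matches ingredients the paper also uses; but the core argument as written would not compile into a proof.
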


The proof of Proposition~\ref{p.nothintentacles} uses the \textbf{odometer function} $u = u_n : \Z^d \to \N$ defined by
	\[ u(x) := \text{total number of exits from $x$ by all $n$ particles during rotor aggregation}. \]
If the same particle exits $x$ several times then we include all of its exits in the count.

We first compare the gradient of $u$ with the net number of crossings of an edge. For each directed edge $(x,y)$ of $\Z^d$ let
	\[ \theta(x,y) := N(x,y) - N(y,x) \]
where $N(x,y)$ is the total number of exits from $x$ to $y$ by all $n$ particles in rotor-router aggregation.

\begin{lemma}
\label{l.odomflow}
For all directed edges $(x,y)$ of $\Z^d$,
	\begin{equation} \label{gradodom} u(x) - u(y) = 2d\, \theta(x,y) + \beta (x,y)
\end{equation}
for a function $\beta$ on directed edges of $\Z^d$ which satisfies
	\[ |\beta(x,y)| \leq 4d-2. \]
\end{lemma}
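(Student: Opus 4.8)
The plan is to exploit the one property we are given about the rotor mechanism, namely that it is \emph{simple}: at every vertex $x$ the exits cycle through the $2d$ neighbors of $x$ one at a time, each neighbor getting served once per period. Since the rotors are never reset, the rotor at $x$ witnesses, over the course of the entire aggregation, a single uninterrupted cyclic sequence of $u(x)$ advances --- one per exit from $x$ --- starting from whatever direction it was initialized to. Consequently the exits from $x$ are spread among its neighbors as evenly as possible, and the proof reduces to turning this into an estimate for a single edge and then differencing.

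Making the equidistribution precise is the first step. Write $u(x) = 2dk + s$ with $0 \le s \le 2d-1$. After $u(x)$ exits from $x$, exactly $s$ of the neighbors of $x$ have received $k+1$ exits and the remaining $2d-s$ have received $k$ exits (which neighbors lie in which class depends on the initial rotor at $x$, but this is irrelevant). Hence for every directed edge $(x,y)$ of $\Z^d$,
\[ N(x,y) = \frac{u(x)}{2d} + a(x,y), \qquad \text{where}\ \ |2d\, a(x,y)| \le 2d-1, \]
the extreme values $\pm(2d-1)$ being attained at $s=1$ and $s=2d-1$. (When $u(x)=0$ we simply have $N(x,y)=0=a(x,y)$.)

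The second step is to subtract the analogous identity for the reversed edge. Using $\theta(x,y) = N(x,y) - N(y,x)$,
\[ 2d\, \theta(x,y) = 2d\, N(x,y) - 2d\, N(y,x) = \bigl(u(x) + 2d\, a(x,y)\bigr) - \bigl(u(y) + 2d\, a(y,x)\bigr), \]
so that \eqref{gradodom} holds with $\beta(x,y) := 2d\, a(y,x) - 2d\, a(x,y)$, and the triangle inequality gives $|\beta(x,y)| \le (2d-1)+(2d-1) = 4d-2$.

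I do not anticipate a serious obstacle; the only point needing care is the equidistribution claim, i.e.\ confirming that simplicity of the mechanism together with the no-reset rule really does pin each per-neighbor exit count to within $1$ of the fair share $u(x)/(2d)$. For this one must observe that all $u(x)$ advances of the rotor at $x$ are pooled into one long cyclic run, so at most one partial period is left over and it contributes at most one extra exit in each direction --- the count $N(x,y)$ is therefore either $\lfloor u(x)/(2d)\rfloor$ or $\lceil u(x)/(2d)\rceil$, which is exactly what the displayed bound on $a(x,y)$ records.
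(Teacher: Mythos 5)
Your proof is correct and follows essentially the same route as the paper's: the key observation in both is that a simple rotor mechanism distributes the $u(x)$ exits from $x$ as evenly as possible among its $2d$ neighbors, giving $|u(x) - 2d\,N(x,y)| \le 2d-1$, and then a triangle inequality on $\theta(x,y) = N(x,y) - N(y,x)$ finishes the job. The paper reaches the single-vertex estimate by noting $u(x)=\sum_{y\sim x}N(x,y)$ with any two summands differing by at most $1$, whereas you compute $N(x,y)\in\{\lfloor u(x)/(2d)\rfloor,\lceil u(x)/(2d)\rceil\}$ directly; these are equivalent bookkeeping choices.
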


\begin{proof}
Note that $u(x) = \sum_{y \sim x} N(x,y)$. There are $2d$ terms in this sum, and the definition of a simple rotor mechanism implies that any two of them differ by at at most $1$, so for $x \sim y$,
	\[ |u(x)-2d\, N(x,y)| \leq 2d-1 \, . \]
By the triangle inequality,
	\begin{align*}  | u(x)-u(y) - 2d\, \theta(x,y) | &\leq |u(x) - 2d\, N(x,y)| + |u(y) - 2d\, N(y,x)|
	\nonumber \\ &
	\leq 4d - 2. \label{eq.4d-2} \qed \end{align*}
\renewcommand{\qedsymbol}{}
\end{proof}

Next consider the discrete Laplacian of the odometer function $u$.  Since the net effect of all rotor moves is to transport $n$ particles at the origin to one particle at each site of $R_n$, we would like to say that ``$\frac{1}{2d} \Delta u = 1_{R_n} - n\delta_\zero$ on average''. (This equality holds exactly in a special case: namely, if every rotor makes an integer number of full turns, so that $N(x,y)$ depends only on $x$, then $\frac{1}{2d} \Delta u (x)$ equals the total number of entries to $x$ minus the total number of exits from $x$.) One way to make a precise statement of this form is to smooth $u$ by averaging its values over a small ball
	\[ \dball(x,k) := \oball(x,k) \cap \Z^d. \]
The proof of the next lemma is a discrete version of $\Delta = \div \nabla$.

\begin{lemma}
\label{l.smoothing}
Fix an integer $k>1$. For $f : \Z^d \to \R$, write
	\[ S_k f (x) := \frac{1}{\# \dball(x,k)} \sum_{y \in \dball(x,k)} f(y). \]
For all $x$ such that $\dball(x,k) \subset R_n -\{\zero\}$, the odometer $u=u_n$ satisfies
	\[ \frac{1}{2d} \Delta S_k u (x) = 1 + O(\frac{1}{k})  \]
where the implied constant depends only on the dimension $d$.
\end{lemma}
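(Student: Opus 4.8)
The strategy is to pass from the pointwise identity $\frac{1}{2d}\Delta u = 1_{R_n}-n\delta_\zero$ (which holds only up to a bounded, i.e.\ $O(1)$, error) to its smoothed version, where the error improves to $O(1/k)$ because averaging converts a volume-sized error into a surface-sized one. First I would observe that $S_k$ commutes with $\Delta$: since $\dball(x,k)$ is a translate of $\dball(\zero,k)$, the denominator $M:=\#\dball(x,k)$ does not depend on $x$, so $S_k$ and $\Delta$ are both convolutions with fixed finitely supported kernels. Hence $\Delta S_k u = S_k(\Delta u)$, and it suffices to understand $\Delta u$ pointwise and then average.

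For the pointwise identity, write $\div\psi(x):=\sum_{y\sim x}\psi(x,y)$ for a function $\psi$ on directed edges. Summing the identity of Lemma~\ref{l.odomflow} over the $2d$ neighbours $y$ of $x$ gives $-\Delta u(x)=2d\,\div\theta(x)+\div\beta(x)$; this is the discrete $\Delta=\div\nabla$, with $\theta$ playing the role of the flow $\frac{1}{2d}\nabla u$ up to the bounded correction $\frac{1}{2d}\beta$. Now $\div\theta$ is computed by conservation of mass: no site outside $R_n$ is ever visited (the $j$-th walker moves only within $R_{j-1}$ together with its stopping site), exactly one particle comes to rest at each site of $R_n$, and $n$ particles originate at $\zero$ (one of which stays). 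Counting exits minus entries at $x$ therefore yields $\div\theta=n\delta_\zero-1_{R_n}$, so that
\[ \tfrac{1}{2d}\Delta u = 1_{R_n}-n\delta_\zero-\tfrac{1}{2d}\div\beta . \]

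Applying $S_k$ and using the hypothesis $\dball(x,k)\subset R_n\setminus\{\zero\}$, we get $S_k 1_{R_n}(x)=1$ and $S_k\delta_\zero(x)=0$, so $\frac{1}{2d}\Delta S_k u(x)=1-\frac{1}{2d}S_k(\div\beta)(x)$. The remaining term is $S_k(\div\beta)(x)=\frac{1}{M}\sum_{z\in\dball(x,k)}\sum_{y\sim z}\beta(z,y)$. The key point is that $\beta$ is antisymmetric, $\beta(y,x)=-\beta(x,y)$, which follows at once by adding the identity of Lemma~\ref{l.odomflow} for $(x,y)$ to the one for $(y,x)$ and using $\theta(x,y)=-\theta(y,x)$. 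Consequently every directed edge with both endpoints in $\dball(x,k)$ cancels against its reverse, and only the edges from $\dball(x,k)$ to its complement survive. Each such edge has its tail within lattice distance $1$ of the sphere of radius $k$ centred at $x$, so there are only $O(k^{d-1})$ of them, each contributing at most $4d-2$; since $M$ is of order $k^d$, we conclude $|S_k(\div\beta)(x)|=O(k^{d-1})/\,O(k^d)=O(1/k)$, with all constants depending only on $d$. This proves the lemma.

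I expect the only real obstacle to be this last step. The naive bound $|\div\beta|\le 2d(4d-2)$ is merely $O(1)$ and dwarfs the target $O(1/k)$; the gain comes entirely from recognizing that smoothing over $\dball(x,k)$ annihilates the ``interior'' part of $\div\beta$ by antisymmetry, leaving a boundary sum of size $O(k^{d-1})$ against a normalization of order $k^d$. The supporting ingredients --- that $S_k$ commutes with $\Delta$, the conservation computation of $\div\theta$, and the estimate that a lattice sphere of radius $k$ meets $O(k^{d-1})$ lattice points --- are routine.
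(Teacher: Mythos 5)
Your proof is correct and follows essentially the same route as the paper's: commute $S_k$ with $\Delta$, invoke conservation of mass to identify the $\theta$ (flow) contribution, and reduce the $\beta$ (error) contribution to a boundary sum of size $O(k^{d-1})$ against the normalization $\#\dball(x,k) \asymp k^d$. The only cosmetic difference is that you first establish the pointwise identity $\frac{1}{2d}\Delta u = 1_{R_n}-n\delta_\zero-\frac{1}{2d}\mathrm{div}\,\beta$ and then telescope only the $\beta$ part via its antisymmetry, whereas the paper telescopes $\sum_{y\in\dball(x,k)}\Delta u(y)$ directly to boundary edges before invoking Lemma~\ref{l.odomflow}; these are the same cancellations in different order.
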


\begin{proof}
Note that $\Delta S_k = S_k \Delta$. Hence
	\[ \frac{1}{2d} \Delta S_k u(x)
	= \frac{1}{\# \dball(x,k)} \sum_{y \in \dball(x,k)} \frac{1}{2d} \Delta u(y). \]
Writing the Laplacian $\Delta u(y)$ as $\sum_{z \sim y} (u(z)-u(y))$, we see that the interior terms with $y,z \in Q$ cancel, leaving (by Lemma~\ref{l.odomflow})
	\[ \frac{1}{\# \dball(x,k)} \sum_{y \in \dball(x,k), \, z \notin \dball(x,k), \, z \sim y} (\theta(z,y) + \frac{1}{2d} \beta(z,y)). \]
The sum of the $\theta$ terms is the net number of particles entering $\dball(x,k)$, which is exactly $\# \dball(x,k)$: the ball starts empty and ends with exactly one particle per site, since $\dball(x,k) \subset R_n - \{\zero\}$.  Each $\beta$ term is $O(1)$ by Lemma~\ref{l.odomflow}, and there are $O(k^{d-1})$ terms in all (one for each boundary edge of $\dball(x,k)$). Dividing by $\#\dball(x,k)$ leaves $1+O(1/k)$.	
%
\end{proof}

The above proof \emph{without} any smoothing gives something much cruder. Namely, if $x \in \Z^d - \{\zero\}$ then the net number of particles entering $x$ is $\sum_y \theta(y,x) = 1_{x \in R_n}$, so
	\[ \left| \frac{1}{2d} \Delta u(x) - 1_{x \in R_n} \right| = \left| \frac{1}{2d} \sum_{y \sim x} (u(y)-u(x) - 2d\, \theta(y,x))  \right| \leq 4d-2 \]
by Lemma~\ref{l.odomflow}. In particular,
	\begin{equation} \label{e.8d^2} \left| \Delta u \right| \leq 8d^2 \quad \text{on } \Z^d-\{\zero\} \end{equation}
We will use Lemma~\ref{l.smoothing} for a fixed $k=k(d)$, chosen large enough so that
	\begin{equation} \label{e.taketheedgeoff}  \left| \frac{1}{2d} \Delta S_k u(x) - 1 \right| < \frac14 \end{equation}
for all $x$ such that $\dball(x,k) \subset R_n -\{\zero\}$.

Next we record the following lemma, which is proved by applying the Harnack inequality to the harmonic extensions of the functions $f(x) \pm |x|^2$ in $\dball (\zero,r)$; see the proof of Lemma 2.17 in section 7 of \cite{LP10}.

\begin{lemma}
\label{l.atmostquadratic}
Fix $r>1$, and let $f$ be a nonnegative function on $\Z^d$ satisfying $f(\zero)=0$ and $|\Delta f| \leq 1$ on $\dball (\zero,2r)$. There is a constant $A_0$ depending only on $d$, such that
	\begin{align*} f(x) &\leq A_0 |x|^2  
	\end{align*}
for all $x \in \dball (\zero,r) \cap \Z^d$.
\end{lemma}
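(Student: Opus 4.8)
The plan is to pass to continuous harmonic extensions and invoke the Harnack inequality, exactly as the preamble to the lemma suggests. First I would introduce the two auxiliary functions $f^\pm(x) = f(x) \pm |x|^2$ on $\dball(\zero,2r)$. Since $\frac{1}{2d}\Delta q \equiv 1$ with $q(x)=|x|^2$, the hypothesis $|\Delta f|\le 1$ on $\dball(\zero,2r)$ gives $\Delta f^+ = \Delta f + 2d \ge 2d - 1 \ge 0$ (for $d\ge 1$) and $\Delta f^- = \Delta f - 2d \le 0$ there; so $f^+$ is subharmonic and $f^-$ is superharmonic on the discrete ball of radius $2r$. The point of this step is that a function with bounded Laplacian is ``nearly harmonic'' up to a quadratic error, and subtracting off $|x|^2$ converts ``bounded Laplacian'' into an honest sub/superharmonicity that we can feed to maximum-principle and Harnack-type arguments.

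Next I would bound $f$ on a slightly smaller ball, say $\dball(\zero, \tfrac{3r}{2})$, in terms of its value at a single point, using the Harnack inequality for nonnegative (sub/super)harmonic functions. Concretely: $f^- $ is superharmonic and, since $f\ge 0$, we have $f^-(x) = f(x) - |x|^2 \ge -|x|^2 \ge -4r^2$ on $\dball(\zero,2r)$, so $f^- + 4r^2$ is a nonnegative superharmonic function on $\dball(\zero,2r)$; similarly one controls $f^+$ from below. Applying the discrete Harnack inequality on $\dball(\zero,2r)$ to these nonnegative functions, comparing the value at a generic $x\in\dball(\zero,r)$ with the value at $\zero$ (where $f(\zero)=0$, hence $f^\pm(\zero)=0$), yields $f^-(x) + 4r^2 \le A_H\,(f^-(\zero)+4r^2) = A_H\cdot 4r^2$ and $f^+(\zero)+4r^2 \ge A_H^{-1}(f^+(x)+4r^2)$, where $A_H=A_H(d)$ is the Harnack constant. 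The first of these already gives $f(x) = f^-(x)+|x|^2 \le (A_H-1)4r^2 + |x|^2 \le ((A_H-1)\cdot 4 + 1)\,r^2$ for $x\in\dball(\zero,r)$. Rescaling/rephrasing this as a bound of the form $f(x)\le A_0|x|^2$ (rather than $A_0 r^2$) requires one more observation: apply the same argument with $2r$ replaced by $2|x|$ for each fixed $x$ — the hypothesis $|\Delta f|\le 1$ holds on $\dball(\zero,2r)\supset\dball(\zero,2|x|)$ whenever $|x|\le r$ — so that the Harnack comparison on $\dball(\zero,2|x|)$ between $x$ and $\zero$ produces $f(x)\le A_0|x|^2$ with $A_0 = 4(A_H-1)+1$ depending only on $d$.

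The discrete Harnack inequality I am invoking is standard (see e.g.\ \cite{LP10} or Lawler's book), but if one prefers to keep everything self-contained, the alternative — and this is the route the cited ``proof of Lemma 2.17 in section 7 of \cite{LP10}'' takes — is to replace $f^\pm$ by their \emph{harmonic extensions} $h^\pm$ to the Euclidean ball $B(\zero,2r)$ (solving the continuous Dirichlet problem with boundary data interpolated from the lattice values, or working with the discrete-harmonic extension inside $\dball(\zero,2r)$), use that $h^\pm$ differs from $f^\pm$ by a controlled amount since $\Delta f^\pm$ is bounded, and then apply the classical Harnack inequality for nonnegative harmonic functions on $B(\zero,2r)$ restricted to $B(\zero,r)$. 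The main obstacle — really the only nonroutine point — is bookkeeping the constants so that the final bound is genuinely $A_0|x|^2$ with $A_0=A_0(d)$ and no hidden dependence on $r$; this is why the trick of running the argument on the ball of radius $2|x|$ (legitimate because the Laplacian bound is assumed on the larger ball $\dball(\zero,2r)$) is essential. Everything else — verifying sub/superharmonicity via $\frac{1}{2d}\Delta|x|^2\equiv 1$, checking nonnegativity after adding $4r^2$, and the triangle-inequality estimates relating $f$ to $f^\pm$ — is immediate.
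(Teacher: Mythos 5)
Your overall plan matches the paper's hint: introduce $f^{\pm}(x) = f(x) \pm |x|^2$, use $\frac{1}{2d}\Delta |x|^2 \equiv 1$ to turn the bound $|\Delta f| \le 1$ into subharmonicity of $f^+$ and superharmonicity of $f^-$, invoke a Harnack-type estimate, and run the argument at scale $|x|$ (on $\dball(\zero,2|x|) \subset \dball(\zero,2r)$) to upgrade the bound $A_0 r^2$ to $A_0 |x|^2$. That last observation is essential, and you handle it correctly.

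The gap is in the Harnack step as you first state it. You apply the two-sided Harnack inequality directly to the nonnegative \emph{superharmonic} function $g = f^- + 4r^2$ (and symmetrically to the subharmonic $f^+ + 4r^2$) to conclude $g(x) \le A_H\,g(\zero)$ for $x \in \dball(\zero,r)$. But the two-sided Harnack inequality is a statement about nonnegative \emph{harmonic} functions; the upper bound on $g(x)$ in terms of $g(\zero)$ can fail for a merely superharmonic $g$. For instance, in $d=2$ let $g$ be the discrete Green function of $\dball(\zero,2r)$ with pole at $y_0 = \lfloor r/2 \rfloor\,\basis_1$: it is nonnegative and superharmonic on $\dball(\zero,2r)$, yet $g(y_0)\asymp \log r$ while $g(\zero)=O(1)$, so $g(y_0)/g(\zero)\to\infty$. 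What one does have for supersolutions is a \emph{weak} Harnack inequality controlling an $L^p$ average by the infimum, which points in the opposite direction from what you need. The feature that rescues the argument is that $\Delta g$ is bounded above \emph{and} below, so $g$ is genuinely close to harmonic; this is exactly what the paper's hint (and your ``alternative'' route) exploits: pass to the discrete-harmonic extensions $h^{\pm}$ of $f^{\pm}$ on $\dball(\zero,2r)$, bound $|f^{\pm}-h^{\pm}|=O(r^2)$ via the maximum principle (comparing $f^{\pm}-h^{\pm}$ with a suitable multiple of $|x|^2-4r^2$), and then apply genuine Harnack to the nonnegative harmonic functions $h^{\pm}$. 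Your second route is therefore the one that closes the proof and coincides with the reference the paper cites; the first, as written, should either be dropped or replaced by an explicitly stated Harnack inequality for nonnegative functions with bounded Laplacian.
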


The next lemma shows that if the odometer is large at $x$, then the cluster $R_n$ contains a ball centered at $x$.

\begin{lemma}
\label{l.smoosh}
Let $A=8d^2 A_0$.
If $u(x)> A k^2$ and $|x|>3k$, then $\dball (x,k) \subset R_n$.
\end{lemma}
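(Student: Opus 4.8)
The plan is to argue by contradiction: suppose $\dball(x,k) \not\subset R_n$, so there is a site $y \in \dball(x,k)$ with $y \notin R_n$. The first thing to extract is that such a $y$ has vanishing odometer, $u(y)=0$. Indeed, in rotor aggregation a walker stops the first time it reaches a site not occupied by another walker, so if any walker ever visited $y$ it would have stopped there and we would have $y \in R_n$; hence no walker visits $y$, and in particular none exits it.

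Next I would use the hypothesis $|x|>3k$ to control the geometry. Every $z \in \dball(x,k)$ satisfies $|z| \ge |x|-k > 2k$; in particular $y \ne \zero$, and moreover the ball $\dball(y,2k)$ is disjoint from $\{\zero\}$ because $|y|>2k$. This is what makes the estimate usable: the clean bound $|\Delta u| \le 8d^2$ of \eqref{e.8d^2} is only available on $\Z^d-\{\zero\}$, and we have just arranged that $\dball(y,2k)\subset \Z^d-\{\zero\}$.

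Now apply Lemma~\ref{l.atmostquadratic}, translated so that its center sits at $y$, to the function $f := \frac{1}{8d^2}u$ with radius $r=k$. This $f$ is nonnegative on all of $\Z^d$, it vanishes at the center since $f(y)=u(y)/(8d^2)=0$, and $|\Delta f| \le 1$ on $\dball(y,2k)$ by the previous paragraph; the standing assumption $k>1$ supplies the requirement $r>1$. The lemma then gives $f(z) \le A_0\,|z-y|^2$ for every $z \in \dball(y,k)$. Since $y \in \dball(x,k)$ we have $|x-y|<k$, so $x$ lies in $\dball(y,k)$ and hence
\[ u(x) \;\le\; 8d^2 A_0\,|x-y|^2 \;<\; 8d^2 A_0\,k^2 \;=\; A k^2, \]
contradicting the hypothesis $u(x) > A k^2$. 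Therefore no such $y$ exists and $\dball(x,k)\subset R_n$.

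I do not expect a serious obstacle: the argument is a short pairing of \eqref{e.8d^2} with Lemma~\ref{l.atmostquadratic}. The two points that deserve care are the dynamical observation that a site outside $R_n$ and distinct from the origin carries zero odometer, and the elementary but essential check that $|x|>3k$ is exactly what keeps the entire ball $\dball(y,2k)$ — the domain on which we invoke the Laplacian bound — away from the origin, the single site where that bound degrades.
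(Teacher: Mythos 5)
Your proof is correct and takes essentially the same route as the paper: pass to the contrapositive, observe that a site $y \in \dball(x,k)$ outside $R_n$ has $u(y)=0$, note that $|x|>3k$ keeps $\dball(y,2k)$ away from the origin so that \eqref{e.8d^2} applies there, and then invoke Lemma~\ref{l.atmostquadratic} (centered at $y$, radius $k$) to bound $u(x)\le Ak^2$. The paper's version is terser but identical in structure; your write-up just makes the geometry and the choice of $f=u/(8d^2)$ explicit.
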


\begin{proof}
We prove the contrapositive: if $\dball (x,k) \not \subset R_n$, then there is a $y \in \dball (x,k)$ such that $u(y)=0$.  Since $u \geq 0$, and $\Delta u$ is bounded by $8d^2$ on $\dball (y,2k)$ (here we use that $\zero \notin \dball (x,3k)$), it follows from Lemma~\ref{l.atmostquadratic} that $u(x) \leq Ak^2$.
\end{proof}

\begin{lemma}
\label{l.smoothsmoosh}
If $S_k u(x)>4Ak^2$ and $|x|>7k$, then $\dball (x,k) \subset R_n$.
\end{lemma}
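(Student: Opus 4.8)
The plan is to deduce this from Lemma~\ref{l.smoosh} by a one-line averaging argument together with a rescaling of the radius. Since $S_k u(x)$ is by definition the average of $u$ over the lattice ball $\dball(x,k)$, the maximum of $u$ on $\dball(x,k)$ is at least this average, so the hypothesis $S_k u(x) > 4Ak^2$ produces a site $y \in \dball(x,k)$ with $u(y) > 4Ak^2 = A(2k)^2$. The idea is now to invoke Lemma~\ref{l.smoosh} at the point $y$, but with $k$ replaced throughout by $2k$.

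First I would check the two hypotheses of Lemma~\ref{l.smoosh} (in its $2k$-form) at $y$. The odometer bound $u(y) > A(2k)^2$ is exactly what we just extracted. For the distance bound, since $y \in \dball(x,k)$ we have $|y-x| < k$, hence $|y| \ge |x| - |y-x| > 7k - k = 6k = 3\cdot(2k)$. Lemma~\ref{l.smoosh} therefore gives $\dball(y,2k) \subset R_n$. It then remains to observe $\dball(x,k) \subset \dball(y,2k)$: any lattice point $z$ with $|z-x| < k$ satisfies $|z-y| \le |z-x| + |x-y| < 2k$, so $z \in \dball(y,2k)$. Combining, $\dball(x,k) \subset \dball(y,2k) \subset R_n$, which is the claim.

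There is no genuine obstacle here; the only thing to watch is the bookkeeping of constants, which is in fact why the hypothesis is stated with $7k$ and $4Ak^2$ rather than anything smaller. The factor $4 = 2^2$ in $S_k u(x) > 4Ak^2$ is precisely what survives passing from the average to a single value $u(y)$ and then rescaling $k \mapsto 2k$ in Lemma~\ref{l.smoosh}, so that the threshold $A(2k)^2$ is met; and the constant $7$ in $|x| > 7k$ is chosen so that $|y| > 6k = 3\cdot 2k$, keeping the enlarged ball $\dball(y,2k)$ clear of the origin as Lemma~\ref{l.smoosh} requires. These constants could be optimized, but since only their existence enters Theorem~\ref{t.rotorlog} it is cleanest to keep them as stated.
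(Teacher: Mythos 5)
Your proof is correct and follows the same strategy as the paper's: pick $y \in \dball(x,k)$ achieving at least the average $S_k u(x)$, note $u(y) > A(2k)^2$ and $|y| > 6k$, apply Lemma~\ref{l.smoosh} at radius $2k$, and absorb $\dball(x,k)$ into $\dball(y,2k)$. Your bookkeeping is in fact slightly tighter than the published version, which writes $\dball(x',3k)$ where $2k$ is what the constants actually support.
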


\begin{proof}
Since $S_k u(x)$ is the average of $u$ over $\dball(x,k)$, there exists $x' \in \dball(x,k)$ with $u(x') \geq S_k u(x)$. Now   Lemma~\ref{l.smoosh} implies that $\dball(x,k) \subset \dball(x',3k) \subset R_n$.
\end{proof}

The next two lemmas follow \cite[Lemmas 4.9 and 4.10]{LP10}.
For $x \in \Z^d$ and $\rho > 0$, let	
	\[ N(x,\rho) := \# ( \dball (x,\rho) \cap R_n) \]
be the number of occupied sites within distance $\rho$ of $x$.

\begin{lemma}
\label{l.shellincrease}
Fix $x \in R_n$ and $\rho < |x|$. Let $m := \max_{\partial \dball (x,\rho)} u$. Then
	$ N(x,\rho+1) \geq (1+\frac{1}{m}) N(x,\rho). $
\end{lemma}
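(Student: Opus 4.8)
The plan is to run a mass-transport / flow argument on the odometer, exploiting the fact that all the mass emitted from the ball $\dball(x,\rho)$ must land on occupied sites, combined with the Harnack-type bound that $u$ cannot grow too fast within the occupied region. More precisely, I would consider the total amount of "odometer flux" leaving the ball $\dball(x,\rho)$ through its boundary edges. On one hand, by Lemma~\ref{l.odomflow} the net edge-crossing count $\theta$ across $\partial\dball(x,\rho)$ is (up to the bounded $\beta$-corrections) proportional to the gradient of $u$; on the other hand, summing $\theta$ over all boundary edges gives the net number of particles that entered the ball, which — since the ball contains $\zero$ is false here, as $\rho<|x|$ means $\zero\notin\dball(x,\rho)$ — is exactly $N(x,\rho)$, the number of occupied sites inside (each such site retains one particle, and no particles start inside). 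So the "outflow" of odometer across the boundary is comparable to $N(x,\rho)$.

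Next I would quantify how that outflow forces occupied sites in the next shell. Every boundary edge $(y,z)$ with $y\in\dball(x,\rho)$, $z\notin\dball(x,\rho)$ along which there is positive net flux forces $u(z)>0$, hence $z\in R_n$; moreover $u(z)\le m$ is false in general, but the right bound is $u(y)\le m$ for $y\in\partial\dball(x,\rho)$ by definition of $m$, and then $u(z)\ge u(y) - (\text{bounded gradient})$... The cleaner route: the total mass emitted from sites in $\dball(x,\rho)$ is $\sum_{y\in\dball(x,\rho)} u(y)$, a fraction of which must be delivered to and absorbed by sites in the shell $\dball(x,\rho+1)\setminus\dball(x,\rho)$, and since no site absorbs more than its odometer allows, one gets $N(x,\rho+1)-N(x,\rho)$ bounded below by (total relevant outflow)$/m$, because each newly occupied site in the shell carries odometer at most $m$ plus $O(1)$, i.e. at most $O(m)$. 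Combining with the lower bound outflow $\gtrsim N(x,\rho)$ yields $N(x,\rho+1) \geq N(x,\rho) + c\,N(x,\rho)/m$; absorbing constants (and using $m\ge 1$, so $c/m$ can be replaced by $1/m$ after adjusting) gives the stated $N(x,\rho+1)\ge (1+\tfrac1m)N(x,\rho)$.

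The main obstacle I anticipate is bookkeeping the boundary corrections cleanly: Lemma~\ref{l.odomflow} only gives $u(x)-u(y) = 2d\,\theta(x,y) + \beta(x,y)$ with $|\beta|\le 4d-2$, so the identification of $\sum_{\partial}\theta$ with $N(x,\rho)$ is exact but converting it into a statement about $\sum u$ on the shell loses additive $O(k^{d-1})$-type terms, which must be shown negligible compared to the main term — presumably by noting $N(x,\rho)\ge c\rho^d$ via Proposition~\ref{p.nothintentacles} (since $x\in R_n$), so the boundary error $O(\rho^{d-1})$ is lower order and can be absorbed into the constant by adjusting $1/m$ downward (and $m$ is itself $\gtrsim \rho^2$-ish when $\rho$ is large, from Lemma~\ref{l.smoosh}-type reasoning, making the correction even more harmless). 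So the real work is checking that the "$+1$" in $(1+\tfrac1m)$ survives these adjustments; I'd expect the cited argument in \cite[Lemma 4.9]{LP10} handles exactly this, and I would follow its structure.
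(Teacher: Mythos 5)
Your proposal takes a much more complicated route than the paper's, and it contains two genuine gaps.

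First, you invoke Proposition~\ref{p.nothintentacles} to control boundary error terms (``noting $N(x,\rho)\ge c\rho^d$ via Proposition~\ref{p.nothintentacles}''). But Lemma~\ref{l.shellincrease} is a stepping stone \emph{towards} Proposition~\ref{p.nothintentacles}: it feeds into Lemma~\ref{l.foam}, which is then used in the proof of the proposition. Using No Thin Tentacles here is circular, and without it your error-absorption argument has nowhere to go.

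Second, you never actually establish the exact factor $(1+\tfrac1m)$; you acknowledge this yourself (``absorbing constants... the real work is checking that the `$+1$' survives''). Your route through $\theta$, $\beta$, and Lemma~\ref{l.odomflow} introduces additive $O(\rho^{d-1})$ corrections that there is no clean way to remove, and replacing $c/m$ by $1/m$ does not ``adjust'' --- it strengthens, which you cannot do for free. The paper avoids all of this by sidestepping the odometer flow entirely. It argues about \emph{particles} rather than flux: since $\rho<|x|$, no particle starts in $\dball(x,\rho)$, so each of the $N(x,\rho)$ particles that finishes there must at some point exit a site $y\in\partial\dball(x,\rho)$ into the ball. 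Hence $N(x,\rho) \le \sum_{y\in\partial\dball(x,\rho)} u(y)$. Now each nonzero summand corresponds to a $y\in R_n$ lying in the shell $\dball(x,\rho+1)\setminus\dball(x,\rho)$, so there are at most $N(x,\rho+1)-N(x,\rho)$ of them, and each is $\le m$ by definition of $m$. That gives $N(x,\rho)\le m\bigl(N(x,\rho+1)-N(x,\rho)\bigr)$ directly, with no error terms, no constants, and no dependence on edge-crossing bookkeeping. You had the right ingredient in your ``cleaner route'' paragraph (each shell site carries odometer at most $m$), but you framed it in terms of mass emitted from inside the ball rather than particles entering it, and then pursued the flux computation, which is where the trouble arose.
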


\begin{proof}
Note that $m \geq 1$ since $x \in R_n$. Since $\rho < |x|$, no particles start in $\dball (x,\rho)$. Each particle entering $\dball (x,\rho)$ must pass through the external vertex boundary $\partial \dball(x,\rho)$, so
	\[ N(x,\rho) \leq \sum_{y \in \partial \dball(x,\rho)} u(y). \]
The sum on the right has at most $N(x,\rho+1)-N(x,\rho)$ nonzero terms, since $\partial \dball(x,\rho) \subset \dball(x,\rho+1)-\dball(x,\rho)$. Each term is at most $m$, so
	\[ N(x,\rho) \leq m N(x,\rho+1)- mN(x,\rho). \qedhere \]
\end{proof}

Fix $k$ large enough that \eqref{e.taketheedgeoff} holds, and let
	\[ R_{n,k} = \{ x \in \Z^d \,:\, \dball (x,k) \subset R_n \}. \]

\begin{lemma}
\label{l.foam}
There are constants $n_0$ and $\rho_0$ depending only on $d$, such that for all $n>n_0$ the following holds.
For each $z_0 \in R_n$ there exists $z_1 \in R_{n,k}$ with $|z_0-z_1| < \rho_0$ and $S_k u (z_1) > 4Ak^2$. (Recall that $u=u_n$.)
\end{lemma}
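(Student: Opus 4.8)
The plan is to argue by contradiction: suppose that for some $n$ and some $z_0\in R_n$ there is no $z_1\in R_{n,k}$ with $|z_0-z_1|<\rho_0$ and $S_k u(z_1)>4Ak^2$, and derive a contradiction once $\rho_0=\rho_0(d)$ and $n_0=n_0(d)$ are large enough. The first step is to deduce from this assumption that $u=u_n$ is bounded by a dimensional constant throughout $\dball(z_0,\rho_0)$; then I would rule that out in two ways, according to whether $z_0$ is far from or close to the origin.

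\emph{Bounding $u$ on the ball.} Choose $n_0$ large enough that the inner bound $B(\zero,r-C\log r)\cap\Z^d\subset R_n$ of \cite{LP09} gives $\dball(\zero,8k)\subset R_n$ for all $n>n_0$. I claim $S_k u(x)\le 4Ak^2$ for every $x$ with $|x-z_0|<\rho_0$. Indeed, if $|x|>7k$ then either $x\in R_{n,k}$, in which case the contradiction hypothesis applied with $z_1=x$ gives this, or $\dball(x,k)\not\subset R_n$, in which case the contrapositive of Lemma~\ref{l.smoothsmoosh} gives this; and if $|x|\le 7k$ then $\dball(x,k)\subset\dball(\zero,8k)\subset R_n$, so $x\in R_{n,k}$ and the hypothesis again gives this. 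Since $u\ge 0$ and $x\in\dball(x,k)$, it follows that $u(x)\le\#\dball(x,k)\,S_k u(x)\le\kappa\cdot 4Ak^2=:M$, where $\kappa=\kappa(d)$ bounds $\#\dball(x,k)$; note $M=M(d)$ since $A$, $k$, $\kappa$ depend only on $d$.

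\emph{The two cases.} If $|z_0|\ge\rho_0$ then $\zero\notin\dball(z_0,\rho_0)$, and for each integer $\rho$ with $1\le\rho\le\rho_0-2$ one has $\rho<|z_0|$ and $\partial\dball(z_0,\rho)\subset\dball(z_0,\rho_0)$, hence $\max_{\partial\dball(z_0,\rho)}u\le M$. Feeding this into Lemma~\ref{l.shellincrease} gives $N(z_0,\rho+1)\ge(1+1/M)N(z_0,\rho)$, and iterating from $\rho=1$ (where $N(z_0,1)\ge 1$ since $z_0\in R_n$) yields $N(z_0,\rho_0-1)\ge(1+1/M)^{\rho_0-2}$. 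This contradicts the trivial bound $N(z_0,\rho_0-1)\le\#\dball(z_0,\rho_0)\le\kappa'\rho_0^d$ once $\rho_0=\rho_0(d)$ is chosen large, as the exponential outgrows the polynomial and $M,\kappa'$ depend only on $d$. If instead $|z_0|<\rho_0$ then $\zero\in\dball(z_0,\rho_0)$, so the bound above forces $u_n(\zero)\le M$; but each of the $n-1$ particles after the first, starting at the already-occupied origin, must exit it at least once, so $u_n(\zero)\ge n-1$, contradicting the previous line once $n_0\ge M+1$. Either way we reach a contradiction, so a valid $z_1$ exists.

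\emph{Where the difficulty lies.} There is no genuinely new ingredient here --- the engine is the geometric-growth step via Lemma~\ref{l.shellincrease}, the quantitative form of ``no thin tentacles'': a bounded odometer near $z_0$ would force the cluster to swell by a fixed factor per unit of radius, which is impossible. The only delicate points are (i) obtaining $S_k u\le 4Ak^2$ on \emph{all} of $\dball(z_0,\rho_0)$, in particular on the region $|x|\le 7k$ where Lemma~\ref{l.smoothsmoosh} says nothing --- this is exactly what requires $n$ to be large and uses the inner bound --- and (ii) the near-origin case, dispatched painlessly by the trivial estimate $u_n(\zero)\ge n-1$.
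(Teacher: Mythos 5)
Your proof is correct and is essentially the contrapositive of the paper's argument: the paper directly bounds $m=\max_{\dball(z_0,\rho_0)}u$ from below via the geometric growth of $N(z_0,\cdot)$ in Lemma~\ref{l.shellincrease} and takes $z_1=\zero$ when $|z_0|\le\rho_0$, whereas you assume all smoothed odometer values in the $\rho_0$-ball are small and derive the same exponential-versus-polynomial contradiction from the same lemma, handling the near-origin case via $u(\zero)\ge n-1$. Your version is if anything a bit more careful than the paper's, since you explicitly invoke the inner bound of \cite{LP09} to cover the region $|x|\le 7k$ (equivalently, to ensure $\zero\in R_{n,k}$), a point the paper's proof leaves implicit.
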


\begin{proof}
Since $n$ particles start at $\zero$, we have $u(\zero) \geq n-1$. Choose $n_0$ large enough that $S_k u (\zero) > 4Ak^2$.  Take $\rho_0 := D k^{d+3}$ where the constant $D$ is chosen below.
If $|z_0| \leq \rho_0$ then take $z_1=\zero$. Otherwise, setting $m := \max_{\dball(z_0,\rho_0)} u$, we iteratively apply Lemma~\ref{l.shellincrease} to obtain
	\[ \left( 1+\frac{1}{m} \right)^{\rho_0} \leq N(z_0,\rho_0) \]
(here we have used that $z_0 \in R_n$ so that $N(z_0,0)=1$ and $m \geq 1$). By definition, the right side is at most $\# \dball(z_0,\rho_0)$. Taking the logarithm of both sides yields
	\[ \frac{\rho_0}{m} \leq C \log \rho_0 \]
for a constant $C$ depending only on $d$. Taking  $D$ large enough in the choice of $\rho_0$ above, it follows that $m > 4Ak^2 \# \dball (z_0,k)$, so there exists $z_1 \in \dball (z_0,\rho)$ with $S_k u(z_1)>4Ak^2$. By Lemma~\ref{l.smoosh} it follows that $z_1 \in R_{n,k}$.
\end{proof}

Lemma~\ref{l.foam} shows that near each point in $R_n$ is a point in $R_{n,k}$.
Next we show that for each point in $R_{n,k}$ there is a nearby point with large
odometer. The proof is by the maximum principle, using an idea of Caffarelli \cite{Caffarelli}.

\begin{lemma}
\label{l.caff}
If
$S_k u (z_1) > 4A k^2$,
then for every $\rho$ satisfying $k<\rho < |z_1|-4k$, there exists $z_2 \in \dball (z_1,\rho)$ such that
	\[ S_k u(z_2) > \frac12 \rho^2. \]
\end{lemma}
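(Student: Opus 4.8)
The plan is to compare the smoothed odometer with a quadratic, in the spirit of Caffarelli's proof of regularity for the obstacle problem already invoked above. Set
\[ v(x) := S_k u(x) - \tfrac12\,|x-z_1|^2 . \]
Since $\tfrac{1}{2d}\Delta|x-z_1|^2 \equiv 1$ (translate of $q(x)=|x|^2$), the estimate \eqref{e.taketheedgeoff} gives
\[ \tfrac{1}{2d}\Delta v(x) > \tfrac34 - \tfrac12 = \tfrac14 \]
at every $x$ with $\dball(x,k)\subset R_n-\{\zero\}$. Thus $v$ is strictly subharmonic on that region, so no point of it is a local maximum of $v$ on $\Z^d$.

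Now apply the discrete maximum principle to $v$ on the lattice ball $\dball(z_1,\rho)$ together with its outer boundary, and let $x^*$ be a point attaining the maximum. If $x^*$ is interior, then $x^*$ is a local maximum of $v$ on $\Z^d$, so $\Delta v(x^*)\le 0$; by the previous paragraph this forces $\dball(x^*,k)\not\subset R_n-\{\zero\}$, and since $\rho<|z_1|-4k$ keeps $|x^*|$ well above $k$, this means $\dball(x^*,k)\not\subset R_n$. The contrapositive of Lemma~\ref{l.smoothsmoosh} (applicable once $|x^*|>7k$, after absorbing the $O(k)$-sized discrepancies between the radii into the constants) then gives $S_k u(x^*)\le 4Ak^2$, and since $\tfrac12|x^*-z_1|^2\ge 0$ we get $v(x^*)\le 4Ak^2$. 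But $z_1$ is interior with $v(z_1)=S_k u(z_1)>4Ak^2$ by hypothesis, so $v(x^*)\ge v(z_1)>4Ak^2$ --- a contradiction. Hence the maximum of $v$ is attained at a boundary point $z_2$, where $|z_2-z_1|\ge\rho$ and $v(z_2)\ge v(z_1)>4Ak^2>0$, so
\[ S_k u(z_2) = v(z_2) + \tfrac12|z_2-z_1|^2 > \tfrac12\rho^2 . \]
(The point $z_2$ lies within distance $\rho+O(1)$ of $z_1$; to place it inside $\dball(z_1,\rho)$ exactly, run the argument on a ball of radius $\rho-O(1)$, which costs only an $O(1)$ in the conclusion.)

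The one delicate point is the interior-maximum dichotomy: one must know that wherever $v$ fails to be subharmonic inside $\dball(z_1,\rho)$ --- i.e.\ at sites $x$ with $\dball(x,k)\not\subset R_n$ --- the value $v(x)$ is nevertheless bounded by the harmless quantity $4Ak^2$. This is exactly where Lemmas~\ref{l.smoosh}--\ref{l.smoothsmoosh} enter, together with the hypothesis $\rho<|z_1|-4k$, whose role is to confine the entire ball to the region $\{|x|\gtrsim k\}$ where those lemmas apply. Matching up the various radii there ($3k$ in Lemma~\ref{l.smoosh}, $7k$ in Lemma~\ref{l.smoothsmoosh}, the $4k$ in the hypothesis, and the outer-boundary shift) is the only bookkeeping that needs care; the Laplacian computation for $v$, the maximum principle, and the final rearrangement are routine.
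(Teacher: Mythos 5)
Your proof is correct and is essentially the same argument as the paper's: you compare $S_k u$ with the quadratic $\tfrac12|x-z_1|^2$, note strict subharmonicity of the difference on $R_{n,k}$ via \eqref{e.taketheedgeoff}, apply the discrete maximum principle, and use Lemma~\ref{l.smoothsmoosh} to discard candidate maxima where subharmonicity fails. The paper organizes the maximum principle over $\mathcal{A}=\dball(z_1,\rho)\cap R_{n,k}$ (so subharmonicity holds everywhere in the domain, and the non-$R_{n,k}$ points become boundary points handled by Lemma~\ref{l.smoothsmoosh}), whereas you run it over the full ball and rule out interior maxima by the same case split --- a cosmetic difference; your parenthetical caveats about the $O(k)$ radius bookkeeping match imprecisions already present in the paper's own proof.
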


\begin{proof}
Note that for every $x \in \mathcal{A} := \dball (z_1,\rho) \cap R_{n,k}$ we have $\dball (x,k) \subset R_n - \{\zero\}$,
so by \eqref{e.taketheedgeoff} the function
	\[ f(x) := S_k u(x) - \frac12 |x-z_1|^2. \]
is subharmonic in $\mathcal{A}$. By the maximum principle,
	\begin{equation} \label{e.themax} \max_{x \in \mathcal{A} \cup \partial \mathcal{A}} f(x) = \max_{x \in \partial \mathcal{A}} f(x) \end{equation}
where $\partial \mathcal{A} := \{x \in \mathcal{A}^c \,: \, x \sim y \text{ for some } y \in \mathcal{A}\}$.
Note that $z_1 \in \mathcal{A}$, and for all $x \in R_{n,k}^c \cap \partial \mathcal{A}$, Lemma~\ref{l.smoothsmoosh}
implies that
	\[  f(x) \leq S_ku(x) \le  4Ak^2 < f(z_1) \, ,  \]
so the maximum \eqref{e.themax} must be attained at some $x \in R_{n,k} \cap \partial \mathcal{A}$. It follows that $x \notin \dball (z_1,\rho)$, so
	\[ S_k u (x) > \frac12 |x-z_1|^2 > \frac12 \rho^2. \qedhere \]
\end{proof}

\begin{proof}[Proof of Proposition~\ref{p.nothintentacles}]
Fix $k=k(d)$ so that \eqref{e.taketheedgeoff} holds. By taking $c$ sufficiently small, we may assume that $\rho>\rho_0$, where $\rho_0$ was defined in Lemma~\ref{l.foam}.
By that lemma, there is a point $z_1\in R_{n,k}$ with $|z_1-z_0| < \rho/4$.
Now by Lemma~\ref{l.caff} there is a point $z_2$ with $S_k u(z_2) > \frac{1}{2} \rho^2$ and $|z_2-z_1| < \rho/2$.
It follows by Lemma~\ref{l.smoothsmoosh} that $R_n$ contains a ball $\dball (z_2, \rho/C)$. Since this ball has volume $c \rho^d$, and $|z_2 - z_0|< 3\rho/4$, the proof is complete.
 \end{proof}

\subsection{Holroyd-Propp bound; Probe functions}

If $h$ is a function on $\Z^d$ and $A \subset \Z^d$ is a finite set, write $h[A] := \sum_{x \in A} h(x)$.
\begin{lemma}
\label{l.HP}
\moniker{Holroyd-Propp Bound}
\cite{HP10}
For any initial rotor configuration and any simple rotor mechanism on $\Z^d$, if $h : \Z^d \to \R$ is discrete harmonic on $R_n$, then
 	\begin{equation} \label{e.HP} \left| h[R_n] - nh(\zero) \right| \leq \sum_{x \in R_n} \sum_{y \sim x} |h(x) - h(y)|. \end{equation}
\end{lemma}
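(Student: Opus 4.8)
The plan is to track the value of $h$ along the trajectory of each particle and sum. Write the walk of the $i$-th particle as $\zero = v_0^{(i)} \to v_1^{(i)} \to \cdots \to v_{L_i}^{(i)} = x_i$, where $x_i$ is the site where it stops; the sites $x_1,\ldots,x_n$ are precisely the $n$ elements of $R_n$, so $\sum_i h(x_i) = h[R_n]$. Telescoping along each trajectory gives $h(x_i)-h(\zero) = \sum_{t=1}^{L_i}\bigl(h(v_t^{(i)})-h(v_{t-1}^{(i)})\bigr)$; summing over $i$ and collecting the increments by directed edge yields
	\[ h[R_n] - n\,h(\zero) = \sum_{(x,y)} N(x,y)\,\bigl(h(y)-h(x)\bigr), \]
a finite sum over ordered pairs of neighbors, with $N(x,y)$ as defined just before Lemma~\ref{l.odomflow}. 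A particle only ever steps between currently occupied sites and its final (newly occupied) site, so $N(x,y)>0$ forces both $x,y\in R_n$; in particular $N(x,y)=0$ whenever $x\notin R_n$, and $u(x)=\sum_{y\sim x}N(x,y)=0$ there as well.

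Next I would exploit that the rotor mechanism is \emph{simple}. Over the course of rotor aggregation the rotor at $x$ is advanced exactly $u(x)$ times, and since each of the $2d$ neighbors occurs once per period, in any run of $u(x)$ consecutive advances each neighbor is selected $\lfloor u(x)/2d\rfloor$ or $\lceil u(x)/2d\rceil$ times. Hence $N(x,y) = \tfrac{1}{2d}u(x) + \epsilon(x,y)$ with $|\epsilon(x,y)|<1$ on every directed edge (and $\epsilon(x,y)=0$ for $x\notin R_n$). Substituting and regrouping the leading term at each vertex,
	\[ h[R_n] - n\,h(\zero) = \sum_{x} \frac{u(x)}{2d}\,\Delta h(x) \;+\; \sum_{x \in R_n}\sum_{y\sim x} \epsilon(x,y)\,\bigl(h(y)-h(x)\bigr). \]
The first sum vanishes: $u$ is supported on $R_n$ and $h$ is harmonic there. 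Taking absolute values in the second sum and using $|\epsilon(x,y)|<1$ gives exactly \eqref{e.HP}.

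The bookkeeping I expect to be most delicate is pinning the coefficient at $1$. This forces the outer sum to range over $x\in R_n$ rather than all of $\Z^d$ — legitimate precisely because $\epsilon(x,y)$ vanishes for $x\notin R_n$ — and it is the \emph{simple} hypothesis that upgrades the crude bound $|N(x,y)-u(x)/2d|=O(d)$ to $|\epsilon(x,y)|<1$. An alternative, slightly lossier route uses only facts already proved: from the discrete Green identity $\sum_x h(x)\Delta u(x) = \sum_x u(x)\Delta h(x) = 0$ (valid since $u$ has finite support and $h$ is harmonic on $\mathrm{supp}\,u\subseteq R_n$), expand $\Delta u = 2d(1_{R_n}-n\delta_\zero)+b$ using Lemma~\ref{l.odomflow}, where $b(x)=\sum_{y\sim x}\beta(y,x)$, and bound the error term edge by edge; here one needs the extra observation that $|\beta(x,y)|\le 2d$ on edges with $x\in R_n$, $y\notin R_n$ (so that $N(x,y)=0$ and all $N(x,z)\le 1$), which again is exactly what makes the constant come out to $1$ after accounting for interior edges being counted twice and boundary edges once.
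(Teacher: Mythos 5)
Your first argument is correct and takes a genuinely different route from the paper's. The paper gives a conservation-law proof: each rotor at $v$ is assigned a weight equal to a partial sum of the increments $h(v)-h(v_j)$ along its cyclic order (consistent precisely because $h$ is harmonic), the total rotor-plus-walker weight is invariant under every rotor step, and the net change of rotor weight at $v$ is bounded by the range of those partial sums, at most $\sum_{y\sim v}|h(v)-h(y)|$. You instead telescope $h$ along trajectories to get the exact identity $h[R_n]-nh(\zero)=\sum_{(x,y)}N(x,y)(h(y)-h(x))$, then exploit the simple-mechanism quasi-uniformity $N(x,y)=u(x)/2d+\epsilon(x,y)$ with $|\epsilon(x,y)|<1$ to split off the vanishing term $\frac{1}{2d}\sum_x u(x)\Delta h(x)$ and bound what remains. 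The two are morally dual: your $\epsilon(x,y)$'s encode exactly the cyclic partial-sum fluctuation behind the paper's rotor weights, and harmonicity enters at the same spot (consistency of the weights there, vanishing of $\sum u\,\Delta h$ here). Yours makes the odometer and the $|\epsilon|<1$ estimate explicit; the paper's invariant is slicker and carries over verbatim to arbitrary graphs and mechanisms. Your sketched second route via Lemma~\ref{l.odomflow} and Green's identity is also salvageable, but as you note it requires the sharper boundary bound on $\beta$ to recover the constant $1$, and is the less economical of your two options.
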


\begin{proof}
For each vertex $v \in R_n$, let $v_1, \ldots, v_{2d}$ be the neighbors of $v$ in $\Z^d$ in the order they appear in $v$'s rotor mechanism. We assign a ``weight'' $w_i(v) \in \R$ to a rotor pointing from $v$ to $v_i$, so that $w_1(v)=0$ and
	\[ w_i(v) - w_{i-1}(v) = h(v) - h(v_i) \]
for each $i=1,\ldots,2d$ (taking indices modulo $2d$). These assignments are consistent since $h$ is discrete harmonic: $\sum_i (h(v) - h(v_i)) = 0$.  We also assign weight $h(v)$ to a walker located at $v$.  The sum of rotor and walker weights is unchanged by each step of rotor walk.  Initially, the sum of all walker weights is $nh(\zero)$.  After all walkers have stopped, the sum of all walker weights is $h[R_n]$.  Their difference is thus at most the change in rotor weights, which is bounded above by the right side of \eqref{e.HP}.
\end{proof}

Next we describe our choice of discrete harmonic function $h$, which is a variant of the ones used in \cite{JLS1,JLS2}. The idea is that for each point $y$ slightly outside $\dball (\zero,r)$ we can build a discrete harmonic ``probe function'' $h=h_y$ whose sum $h[R_n]$ measures how close the cluster $R_n$ comes to $y$.  A good choice of $h$ turns out to be a discrete derivative of Green's function in the radial direction: the essential properties of this $h$ are that it is
nonnegative in a neighborhood of the ball $\dball (\zero,r)$ and it decays rapidly away from $y$.

In the next lemma, $J,K,L,M$ are constants depending only on the dimension $d$.

\begin{lemma}
\label{l.probe}
\moniker{Probe Function}
Fix $r > \rho > J$.
For each $y \in \Z^d$ with $r+2\rho < |y|< r+3\rho$ there exists a function $h : \Z^d \to \R$ with the following properties.
\begin{enumerate}
\item[(i)] $h$ is discrete harmonic and nonnegative on $\dball (\zero,r+\rho)$.
\item[(ii)] $h(x) < K|x-y|^{1-d}$.
\item[(iii)] $h(x) > \frac14 \rho^{1-d}$ for all $x \in \dball (\zero,r+\rho) \cap \dball (y,2\rho)$.
\item[(iv)] $\sum_{x \in \dball (\zero, r+\rho)} \sum_{z \sim x} |h(x) - h(z)| < L \log r$.
\item[(v)] $h[\dball (\zero, r)] > (\# \dball(\zero,r)) h(\zero) - M \log r$.
\end{enumerate}
\end{lemma}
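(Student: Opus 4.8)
The plan is to construct $h$ explicitly as a discrete directional derivative of the Green function and then verify properties (i)--(v) one at a time. Write $y = |y| \basis$ for a unit vector $\basis$ (approximately a lattice direction), and set
\[
h(x) := g(x - y) - g(x - y - \basis^\ast)
\]
where $\basis^\ast$ is the nearest lattice vector to $\basis$ pointing outward (away from the origin), and $g$ is the discrete Green function of $\Z^d$ from \eqref{e.dirac}. Since $g$ is harmonic away from its singularity, $h$ is discrete harmonic except at the two points $y$ and $y+\basis^\ast$, both of which have norm $> r + 2\rho > r+\rho$; hence $h$ is harmonic on $\dball(\zero, r+\rho)$. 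For nonnegativity on this ball, I would use the asymptotic expansion \eqref{e.greenasymp}--\eqref{e.greenformula}: the difference $g(x-y) - g(x-y-\basis^\ast)$ is, up to $O(|x-y|^{-d})$ error, the radial derivative $-\partial_\basis \green(x-y) = c_d |x - y|^{-d}\, \langle \basis, \text{(unit vector from $y$ to $x$)} \rangle$ times a positive constant, and for $x$ in the ball $\dball(\zero, r+\rho)$ while $|y| > r+2\rho$, the inner product $\langle \basis, x - y\rangle$ is bounded away from zero (negatively, in the direction that makes $-\partial_\basis\green > 0$), so the main term dominates the error term once $|x-y| \gtrsim \rho > J$ for $J$ large. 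This simultaneously gives (i), the upper bound (ii) $h(x) \le K|x-y|^{1-d}$ (the directional derivative of $|z|^{2-d}$ is of order $|z|^{1-d}$), and the lower bound (iii) on the smaller ball $\dball(y,2\rho)$, where $|x-y| \le 2\rho$ forces $h(x) \ge \frac14 \rho^{1-d}$.

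For property (iv), the total variation bound, I would split the sum $\sum_{x \in \dball(\zero,r+\rho)}\sum_{z\sim x} |h(x)-h(z)|$ into dyadic shells according to the distance $t = |x - y|$. On the shell $t \asymp 2^j$, the gradient $|h(x) - h(z)|$ is a second difference of $g$, hence of order $|x-y|^{-d} \asymp 2^{-jd}$ by differentiating the asymptotics twice (second derivatives of $\green$ are $O(|z|^{-d})$, and the $O(|z|^{-d})$ error term in \eqref{e.greenasymp} contributes a comparable or smaller amount after differencing --- one should be slightly careful here and possibly cite the refined gradient estimates for $g$ in \cite{FU,KS,Uchiyama}, exactly as \cite{JLS1,JLS2} do). The number of lattice points $x \in \dball(\zero,r+\rho)$ with $|x-y|\asymp 2^j$ is $O(\min(2^{jd}, r^{d-1} \cdot 2^j))$ --- the ball is $d$-dimensional but also contained in a slab of width $O(r)$ near distance $2^j$ from an exterior point --- and summing $2^{-jd} \cdot \min(2^{jd}, r^{d-1}2^j)$ over $j$ from $0$ to $\log_2(2r)$ yields $O(\log r)$: each dyadic shell contributes $O(1)$ and there are $O(\log r)$ of them. (The relevant range of $j$ is bounded since all $x$ in the ball have $|x-y| \le |x| + |y| = O(r)$.)

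For property (v), I would integrate the harmonicity of $h$ against the mass distribution $\one_{\dball(\zero,r)} - (\#\dball(\zero,r))\delta_\zero$. Precisely, let $G_r$ be the Green function of simple random walk killed on exiting $\dball(\zero,r)$; then $h[\dball(\zero,r)] - (\#\dball(\zero,r)) h(\zero)$ equals a boundary term expressible via the normal derivative of $h$ on $\partial \dball(\zero,r)$ against the appropriate discrete harmonic measure / capacity weights, which is controlled by $\sum_{x \in \partial\dball(\zero,r)} (\text{something} \le u_\infty$-type quadratic weight$) \cdot |h(x) - h(z)|$. Cleanest is to mimic \cite[Lemmas 4.9--4.10]{LP10}: use the identity $h[\dball(\zero,r)] - (\#\dball(\zero,r))h(\zero) = -\sum_{x \in \partial \dball(\zero,r)} \phi(x)(h(x)-h(\text{inward neighbor}))$ where $\phi \ge 0$ is the odometer of the divisible sandpile from mass $\#\dball(\zero,r)$ at $\zero$, which by Theorem~\ref{t.divsand} satisfies $\phi = O(r^2)$ on the boundary shell; combined with the pointwise gradient bound $|h(x)-h(z)| = O(|x-y|^{-d}) = O(\rho^{-d})$ for $x \in \partial\dball(\zero,r)$ (since such $x$ has $|x-y| \ge 2\rho$) and the $O(r^{d-1})$ boundary points, this gives an error $O(r^2 \cdot r^{d-1} \cdot \rho^{-d})$, which is far better than $O(\log r)$ when $\rho$ is a suitable power of $r$ --- but since the lemma only requires $\rho > J$ a constant, I would instead keep the shell-by-shell estimate as in (iv), which already gives the $O(\log r)$ bound directly, and note (v) follows from (iv) via this boundary-term identity. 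The main obstacle is property (iv): getting the constant in front of $\log r$ to depend only on $d$ requires honest control of second differences of the Green function uniformly in all the relevant scales, and the dyadic bookkeeping (that the ball of radius $r+\rho$ meets each dyadic annulus around the exterior point $y$ in only $O(r^{d-1} 2^j)$ points) is the crux --- everything else is a fairly mechanical consequence of the Green function asymptotics \eqref{e.greenasymp}.
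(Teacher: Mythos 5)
Your overall plan---take a discrete radial derivative of the Green function as a probe, then read off (i)--(v) from the asymptotics \eqref{e.greenasymp}--\eqref{e.greenformula}---is exactly the paper's approach, and your treatment of (ii), (iii), (iv) is essentially sound. But there is a genuine gap in the definition of $h$.

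You set $h(x) = g(x-y) - g(x-y-\basis^\ast)$ where $\basis^\ast$ is a single lattice vector nearest to $y/|y|$. Applying Taylor expansion, this $h$ is approximately $c_d\,\inner{\basis^\ast, y-x}\,|x-y|^{-d}$, \emph{not} $c_d\,\inner{y/|y|, y-x}\,|x-y|^{-d}$ as your subsequent analysis uses. Nonnegativity of $h$ on $\dball(\zero,r+\rho)$ requires $\inner{\basis^\ast, y-x}>0$ for every $x$ with $|x|<r+\rho$, i.e.\ $\inner{\basis^\ast, y}>r+\rho$. If $d\ge 2$ and $y/|y|$ sits at a $45^\circ$ angle to the axes, then $\inner{\basis^\ast,y}\le |y|/\sqrt 2 < (r+3\rho)/\sqrt 2$, and this is $\le r+\rho$ whenever $\rho \lesssim r$ --- which is precisely the regime that matters in the application, where $\rho$ is of order $\log r$. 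So your $h$ fails (i) in general, and the bound (iii) (which leans on the same positivity) fails with it. The paper avoids this by taking the \emph{weighted} combination $h(x) = b\sum_{i=1}^d \frac{y_i}{|y|}[g(x-y+\basis_i)-g(x-y)]$, whose Taylor expansion reproduces the true directional derivative in the direction $y/|y|$ and yields the prefactor $\inner{y,y-x}/(|y|\,|y-x|)$, which is $>\rho/|y-x|>0$ uniformly on $\dball(\zero,r+\rho)$. This is not a cosmetic choice: a single rounded lattice direction cannot give the needed sign condition.

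Two smaller points. For (iv) your dyadic bookkeeping is more complicated than necessary: since $\dist(y,\dball(\zero,r+\rho))>\rho$ and the ball is contained in $\dball(y,4r)$, one simply bounds the sum by $\sum_{\rho\le |x-y|\le 4r} O(|x-y|^{-d}) = O(\log(r/\rho)) = O(\log r)$; the slab counts you sketched are not needed. For (v), your Green-identity/odometer route is a viable alternative to the paper's argument (which compares $h[\dball(\zero,r)]$ to $\int_{B(\zero,r)} H$ and uses the continuum mean value property for $H$), but as written it has an arithmetic slip: by \eqref{e.pointsourceodomest} the divisible sandpile odometer on the annulus $|x|\in[r-c,r+c]$ is $O(1)$, not $O(r^2)$, and the boundary sum you would then estimate is $\sum_{|x|\approx r}|x-y|^{1-d} = O(\log r)$ by the same radial integral --- so the conclusion is still fine, but the intermediate claim needs correcting.
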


\begin{proof}
Let
	\[ h(x) := b \sum_{i=1}^d \frac{y_i}{|y|} [g(x-y+\basis_i) - g(x-y)]. \]
where $g$ is the Green function of $\Z^d$ defined in Section~\ref{s.green}, and $b$ is a small constant we will choose later.

Since $g$ is discrete harmonic on $\Z^d - \{\zero\}$, the function $h$ is discrete harmonic on $\Z^d - \dball (y,1)$. To show that $h$ is nonnegative and prove (ii)-(v), we approximate $g$ by its continuum analogue $G$ of \eqref{e.greenasymp}, which yields
	\begin{equation} \label{e.capitalize}  h(x) = b \sum_{i=1}^d \frac{y_i}{|y|} [G(x-y+\basis_i) - G(x-y)] + O(|x-y|^{-d}). \end{equation}
Next observe that
	\[ \frac{\partial G}{\partial x_i} (x) = - \frac{2}{\omega_d} \frac{x_i}{|x|^d} \]
in all dimensions $d \geq 2$, and the second derivatives of $G$ are $O(|x|^{-d})$. (We remark that $G$ also has a simple physical interpretation as the Newtonian potential of a mass at the origin in $\R^d$. The gradient of $G$ is the gravitational force exerted by a mass at $\zero$ on a mass at $x$.) A first order Taylor expansion of $G$ around $x-y$ gives
	\[ h(x) = b \sum_{i=1}^d \frac{y_i}{|y|} \left[ \frac{2}{\omega_d} \frac{y_i-x_i}{|y-x|^d} \right] + O(|y-x|^{-d}) \]
where the implied constant in the error term depends only on $d$. Take $b = \omega_d/2$ so that
	\begin{equation} \label{e.innerproduct}  h(x) = \frac{ \inner{y, y-x}}{|y| |y-x|} |y-x|^{1-d} + O(|y-x|^{-d}). \end{equation}

(i) Setting $z = (r+\rho)\frac{y}{|y|}$, we have for all $x \in B(\zero,r+ \rho)$
	\begin{equation}
	\label{e.innerproductlowerbound} \inner{y, y-x} \geq  \inner{y, y-z} > \rho |y| \end{equation}
so that
	\[ h(x) > (\rho - O(1)) |y-x|^{-d} > 0 \]
on $B(\zero,r+\rho)$ (take $J$ large enough to beat the $O(1)$).

(ii)  By Cauchy-Schwarz, the prefactor in equation \eqref{e.innerproduct} is at most $1$.

(iii) For $x \in \dball (y,2\rho) \cap \dball (\zero,r+\rho)$ we have by \eqref{e.innerproduct} and \eqref{e.innerproductlowerbound}
	\[ h(x) > \frac{\rho |y|}{2 \rho |y|} \rho^{1-d} - O(\rho^{-d}).
	\]
Take $J>4$ to ensure that (iii) holds.	

(iv) This follows from \eqref{e.greenasymp}.

(v) 
Writing $H(x)$ for the main term of \eqref{e.capitalize}, we have
	\[ \int_{B(\zero,r)} h(x) \d x = \int_{B(\zero,r)} H(x) \d x + O(\log r) = \omega_d r^d H(\zero) + O(\log r). \]
Write $B^\Box$ for the union of closed unit cubes centered at the points of $\dball (\zero,r)$. Then
	\[ \sum_{x \in \dball (\zero,r)} h(s) = \int_{B^\Box} H(x) \d x + O(\log r). \]
Moreover since $h(x) < K|x-y|^{1-d}$ we have $\int_{B^\Box} H - \int_{B(\zero,r)} H = O(\log r)$.
\end{proof}

Now we prove the main result of this section. As in \cite{JLS1,JLS2} the idea is to amplify the ability of $h[R_n]$ to detect fluctuations in $R_n$: The absence of thin tentacles (Proposition~\ref{p.nothintentacles}) implies that if $R_n$ has a point within distance $\rho$ of $y$ then $R_n$ has a substantial number of points (at least $c\rho^d$) within distance $2\rho$ of $y$. Each of these points contributes substantially to the sum $h[R_n]$, but if $\rho$ is on the order of $\log r$ then $c \rho^d$ is very small compared to $n$. To show that their contribution is not swamped by the rest of $R_n$, we use the inner bound of \eqref{e.rotorlog} and the discrete mean value property Lemma~\ref{l.probe}(v).

\begin{proof}[Proof of Theorem~\ref{t.rotorlog}]
The inner bound was proved in \cite{LP09}. To prove the outer bound, let
	\[ \rho := \max(J, \max_{x \in R_n} |x| - r). \]
By \cite{LP09}
we have $\rho < r$ (in fact $\rho < r^{(d-1)/d} \log r$).

We may assume that $\rho>J$. Fix $y \in \partial \dball(\zero, r+2\rho)$ such that $\dball(y, 3\rho/2)$ intersects $R_n$, and let $h$ be as in Lemma~\ref{l.probe}. By Proposition~\ref{p.nothintentacles},
	$ \# (R_n \cap B(y,2\rho)) \geq c \rho^d$.
Since $R_n \subset B(\zero,r+\rho)$ and $|y|>r+2\rho$, it follows that $R_n \cap B(y,2\rho) \subset A := B(y,2\rho) - B(y,\rho)$.  By Lemma~\ref{l.probe}(iii) we have
	\[ h [R_n \cap A] > (\frac14 \rho^{1-d})(c \rho^d) = \frac{c}{4} \rho. \]

By the discrete mean value property, Lemma~\ref{l.probe}(v), writing $n_1 = \# \dball(\zero, r-C\log r)$ we have
	\[  h [ \dball (\zero, r-C\log r) ] > n_1 h(\zero) - M \log r. \]
By Lemma~\ref{l.probe}(i) we have $h \geq 0$ on $R_n$. Now we throw away the terms $h(x)$ for $x \notin A \cup B(\zero, r-C\log r)$ and use the inner bound $\dball (\zero, r-C\log r) \subset R_n$, obtaining	
	\[ h[R_n] - n h(\zero) > h[R_n \cap A] - M \log r - (n-n_1) h(\zero). \]
The left side is at most $L \log r$, by the Holroyd-Propp Lemma~\ref{l.HP} and Lemma~\ref{l.probe}(iv). Using $n_1 > (r- 2C\log r)^d$, we get
	\[ (L+M) \log r > \frac{c}{4} \rho - (2Cd r^{d-1} \log r) h(\zero).  \]
Finally, since $h(\zero) < Kr^{1-d}$ by Lemma~\ref{l.probe}(ii), we conclude that
 	\[ \rho < C' \log r \]
as desired, with $C' = \frac{4}{c} (2CKd+L+M)$.
\end{proof}

\section{Multiple sources; Quadrature domains}

\begin{figure}
\begin{center}
\includegraphics[width=\textwidth]{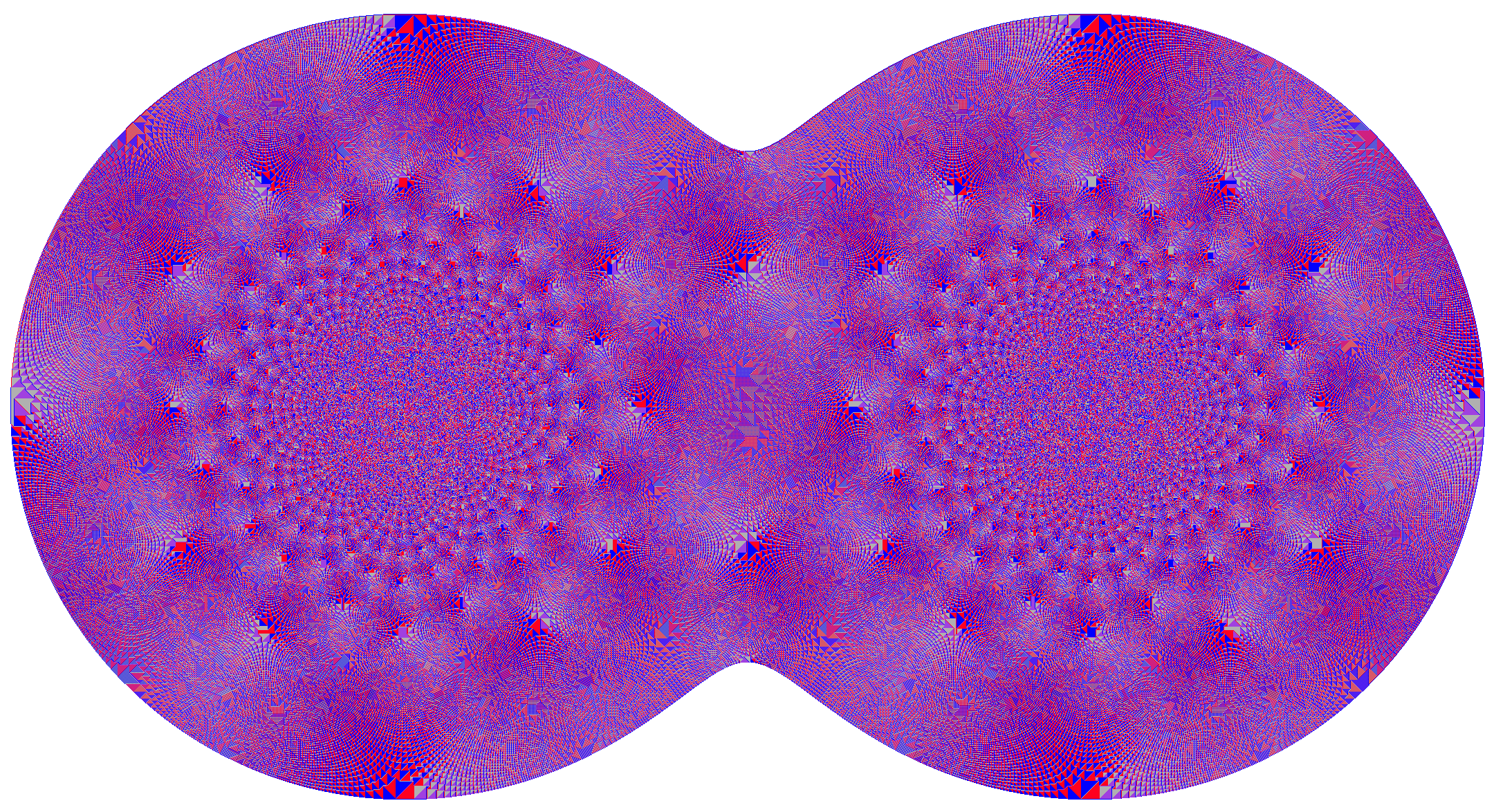}
\end{center}
\caption{Rotor-router aggregation started from two point sources in $\Z^2$.  Its scaling limit is a two-point quadrature domain in $\R^2$, satisfying \eqref{e.quad}.}
\end{figure}

The Euclidean ball as a limiting shape is not too hard to guess.  But what if the particles start at two different points of $\Z^d$?  For example, fix an integer $r \geq 1$ and a positive real number $a$, and start with $m = \floor{\omega_d (ar)^d}$ particles at each of $r\basis_1$ and $-r\basis_1$.  Alternately release a particle from $r\basis_1$ and let it perform simple random walk until it finds an unoccupied site, and then release a particle from $-r\basis_1$ and let it perform simple random walk until it finds an unoccupied site.  The result is a random set $I_{m,m}$ consisting of $2m$ occupied sites in $\Z^d$.

If $a<1$, then the distance between the source points $\pm r \basis_1$ is so large compared to the number of particles that with high probability, the particles starting at $r\basis_1$ do not interact with those starting at $-r\basis_1$.  In this case $I_{m,m}$ is a disjoint union of two ball-shaped clusters each of size~$m$.  On the other hand, if $a \gg 1$, so that the two source points are very close together relative to the number of particles released, then the cluster $I_{m,m}$ will look like a single ball of size~$2m$.  What happens in between these extreme cases?

\begin{theorem} \cite{LP10}
\label{t.twosourceintro}
For each $a>0$ there exists a deterministic domain $D \subset \R^d$ such that with probability $1$
	\begin{equation} \label{eq:twosourcedomainconvergence} \frac{1}{r} I_{m,m} \to D \end{equation}
as $r \to \infty$.
\end{theorem}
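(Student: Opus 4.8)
The plan is to route the statement through the \emph{divisible} sandpile — whose cluster is deterministic and governed by an obstacle problem (Lemma~\ref{l.twofamilies}) — and then to treat internal DLA as a small random perturbation of it. Write $\sigma_0 := m(\delta_{r\basis_1}+\delta_{-r\basis_1})$, with $m=\floor{\omega_d(ar)^d}$, so that $m/r^d\to\omega_d a^d$, and run the divisible sandpile from $\sigma_0$, with odometer $u$ and cluster $D^{\mathrm{div}}_{m,m}:=\{\sigma_\infty=1\}$. By Lemma~\ref{l.twofamilies}, using $\tfrac1{2d}\Delta|x|^2\equiv1$ and \eqref{e.dirac} (cf.\ \eqref{eq:thediscreteobstacle}),
\[ u = s-\gamma,\qquad \gamma(x) = -|x|^2 - m\,g(x-r\basis_1) - m\,g(x+r\basis_1),\qquad s=\inf\{f: f\geq\gamma,\ \Delta f\leq0\}, \]
and by Lemma~\ref{l.stabilize} the cluster $D^{\mathrm{div}}_{m,m}$ agrees with the noncoincidence set $\{s>\gamma\}=\{u>0\}$ up to a one-site boundary layer.

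\emph{Scaling limit and the quadrature domain.} Set $\Gamma_r(y):=r^{-2}\gamma(ry)$ and $S_r(y):=r^{-2}s(ry)$, regarded as functions on $r^{-1}\Z^d$. The Green-function asymptotics \eqref{e.greenasymp}--\eqref{e.greenformula} and $m/r^d\to\omega_d a^d$ give that $\Gamma_r$, after subtracting a constant (relevant only when $d=2$, and immaterial since it does not move the noncoincidence set), converges uniformly on compact subsets of $\R^d$ to
\[ \Gamma(y) := -|y|^2 - \omega_d a^d\bigl(\green(y-\basis_1)+\green(y+\basis_1)\bigr). \]
Because the obstacle-problem map $\gamma\mapsto\inf\{f\geq\gamma:\Delta f\leq0\}$ is a sup-norm contraction, the rescaled discrete Laplacian tends to the continuum Laplacian, and $\tfrac1{2d}\Delta(S-\Gamma)=1$ off the poles $\pm\basis_1$ supplies the nondegeneracy at the free boundary, one gets $S_r\to S$ uniformly on compacts and $r^{-1}\{s>\gamma\}\to D:=\{S>\Gamma\}$ in Hausdorff distance, where $S$ solves the continuum obstacle problem for $\Gamma$. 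Put $V:=S-\Gamma\geq0$. On $D$ we have $\tfrac1{2d}\Delta V = 1-\omega_d a^d(\delta_{\basis_1}+\delta_{-\basis_1})$ (the sources lie inside $D$), while $V$ and $\nabla V$ vanish on $\partial D$ by the $C^{1,1}$-regularity of $S$. Green's second identity applied to $V$ and any $h$ harmonic near $\overline D$ kills all boundary and $V\Delta h$ terms, leaving the quadrature identity $\int_D h\,\d x = \omega_d a^d\bigl(h(\basis_1)+h(-\basis_1)\bigr)$, namely \eqref{e.quad}, which together with mild regularity determines $D$ uniquely; hence the limit shape is deterministic. This is the two-source analogue of the argument behind Theorem~\ref{t.divsand}, the only new ingredient being the two logarithmic (resp.\ pole) singularities of $\Gamma$ instead of one, handled by interior estimates on $V$.

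\emph{Internal DLA tracks the divisible sandpile.} It remains to show $\tfrac1r I_{m,m}\to D$ almost surely, with the \emph{same} $D$. Using the abelian property of IDLA \cite{DF91} to release the walkers in any convenient order, a martingale comparison with independent random walks in the spirit of \cite{LBG92}, refined as in \cite{JLS1,JLS2}, controls the IDLA odometer $u^{\mathrm{IDLA}}$: the gap between the net flux read off from $u^{\mathrm{IDLA}}$ and its ``mean-field'' value $\mathbf1_{I_{m,m}}-\sigma_0$ is governed by a martingale with $O(1)$ increments, so summing over the $O(r^d)$ relevant sites and applying a concentration inequality shows that, with stretched-exponential probability, $r^{-2}u^{\mathrm{IDLA}}(r\,\cdot)$ converges uniformly to the same limit $V$ as above. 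Consequently $\tfrac1r I_{m,m}$ lies between the inner and outer $\epsilon$-neighborhoods of $D=\{V>0\}$; adding the IDLA ``no thin tentacles'' estimate (\cite[Lemma A]{JLS1}, the analogue of Proposition~\ref{p.nothintentacles}) to rule out isolated occupied sites far from the bulk, then passing to a geometric subsequence $r_k=2^k$ via Borel--Cantelli and interpolating by monotonicity of the cluster in the number of particles, yields the almost-sure statement.

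\emph{Expected main obstacle.} The crux is the last step: since $D$ is neither rotationally symmetric nor smooth where its two lobes meet, there is no symmetry to exploit as in the single-source ball case, so the martingale concentration must be made uniform along the entire free boundary, using the quantitative regularity of $D$ (equivalently of the divisible cluster from the previous step) to localize the comparison near $\partial D$. The stability of the obstacle problem under the singular point-mass perturbation is also somewhat delicate, but it is in essence the technique already underlying Theorem~\ref{t.divsand}.
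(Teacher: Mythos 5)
The survey does not prove Theorem~\ref{t.twosourceintro}; it states it and cites \cite{LP10}, where the proof is carried out in full (for general $k$ sources, which is Theorem~\ref{multiplepointsources} here). Your outline is, in broad strokes, the strategy of that paper: prove a scaling limit for the deterministic divisible sandpile via stability of the obstacle problem under discrete-to-continuum scaling, identify the continuum noncoincidence set with the two-point quadrature domain \eqref{e.quad} via Green's identity and free-boundary regularity, and then show that internal DLA concentrates around the divisible sandpile cluster. The obstacle $\gamma(x) = -|x|^2 - m\,g(x - r\basis_1) - m\,g(x + r\basis_1)$, the rescaling argument using \eqref{e.greenasymp}--\eqref{e.greenformula}, the irrelevance of the additive $\log r$ constant when $d=2$, and the $1$-Lipschitz stability of the least-superharmonic-majorant map are all correctly deployed and match \cite{LP10}.

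The part you flag as the crux --- that IDLA tracks the divisible sandpile --- is also where your sketch is thinnest and where it departs from \cite{LP10}. The invocations of the martingale machinery of \cite{JLS1,JLS2} and of the ``no thin tentacles'' lemma of \cite{JLS1} are anachronistic (those papers postdate \cite{LP10}) and unnecessary here: they were built to get logarithmic fluctuation bounds, whereas \eqref{eq:twosourcedomainconvergence} is purely qualitative. The argument in \cite{LP10} instead adapts the inner/outer estimates of \cite{LBG92}, using the abelian property of IDLA to release particles in stages and bounding particle escape by Green-function comparisons near the boundary of the divisible-sandpile cluster. Your heuristic of ``a martingale with $O(1)$ increments summed over $O(r^d)$ relevant sites'' does not deliver the conclusion as stated: such a bound gives deviations of order $r^{d/2}$, which is $o(r^2)$ only when $d<4$, so this crude count cannot close the argument in higher dimensions, and it also does not address how the concentration is made uniform along the non-spherical free boundary (a difficulty you correctly anticipate but leave open). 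These are the technical heart of the proof and cannot be left at the level of a sketch, but the overall architecture --- divisible sandpile and obstacle problem first, then a probabilistic comparison for IDLA --- is the right one and is precisely what \cite{LP10} does.
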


The precise meaning of the convergence of domains in \eref{twosourcedomainconvergence} is the following: given $D_r \subset \frac1r \Z^d$ and $\Omega
 \subset \R^d$, we write $D_r \to \Omega$ if for all $\epsilon>0$ we have
	\begin{equation} \label{e.ioconvintro}
	 \Omega_\epsilon \cap \frac1r \Z^d \subset D_r \subset \Omega^\epsilon
	 \end{equation}
for all sufficiently large $r$, where
	\[ \Omega_\epsilon := \{ x \in \Omega \mid \cball(x,\epsilon) \subset \Omega \} \]
and
	\[ \Omega^\epsilon := \{ x \in \R^d \mid \oball(x,\epsilon) \not\subset \Omega^c \} \]
are the inner and outer $\epsilon$-neighborhoods of~$D$.

The limiting domain $D$ is called a \textbf{quadrature domain} because it satisfies
	\begin{equation} \frac{1}{\omega_d a^d} \label{e.quad} \int_D h \d x  = h(-\basis_1) + h(\basis_1) \end{equation}
for all integrable harmonic functions $h$ on $D$, whre $\d x$ is Lebesgue measure on $\R^d$. This identity is analogous to the mean value property $\int_{B} h \d x = h(\zero)$ for integrable harmonic functions on the ball $B$ of unit volume centered at the origin.

In dimension $d=2$, the domain $D$ has a more explicit description: For $a \geq 1$ its boundary in $\R^2$ is the quartic curve
	\begin{equation} \label{eq:twosourcequartic} \left(x^2+y^2\right)^2 - 2a^2 \left(x^2 + y^2\right) - 2(x^2 - y^2) = 0. \end{equation}
When $a=1$ this factors as
	\[ (x^2 + y^2 - 2x)(x^2 + y^2 + 2x) = 0 \]
which describes the union of two unit circles centered at $\pm \basis_1$ and tangent at the origin.  This case corresponds to two clusters that just barely interact, whose interaction is small enough that we do not see it in the limit.  When $a \gg 1$, the term $2(x^2-y^2)$ is much smaller than the others, so the curve \eref{twosourcequartic} is approximately the circle
	\[ x^2 + y^2 - 2a^2 = 0. \]
This case corresponds to releasing so many particles that the effect of releasing them alternately at $\pm r e_1$ is nearly the same as releasing them all at the origin.

Theorem~\ref{t.twosourceintro} extends to the case of any $k$ point sources in $\R^d$ as follows.

\begin{theorem} \cite{LP10}
\label{multiplepointsources}
Fix $x_1, \ldots, x_k \in \R^d$ and $a_1, \ldots, a_k > 0$.  Let $x_i^\Points$ be a closest site to $x_i$ in the lattice $\frac1n \Z^d$, and let
	\begin{align*} D_n &= \{ \text{occupied sites for the divisible sandpile} \} \\
	 R_n &= \{ \text{occupied sites for rotor aggregation} \} \\
	 I_n &= \{ \text{occupied sites for internal DLA} \} \end{align*}
started in each case from $\floor{a_i n^d}$ particles at each site $x_i^\Points$ in $\frac1n \Z^d$.

Then there is a deterministic set $D \subset \R^d$ such that
	\[ D_n, R_n, I_n \to D \]
where the convergence is in the sense of \eqref{e.ioconvintro}; the convergence for $R_n$ holds for any initial setting of the rotors; and the convergence for $I_n$ is with probability $1$.
\end{theorem}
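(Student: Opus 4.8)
The plan is to identify the limit set $D$ as the noncoincidence set of a single continuum obstacle problem, to prove the convergence first for the divisible sandpile by passing to the limit in the discrete obstacle problem of Lemma~\ref{l.twofamilies}, and then to deduce the rotor and internal DLA statements by controlling how far those models' odometers can stray from the divisible odometer. Rescale everything to $\tfrac1n\Z^d$, so the sources become $\sigma_n = \sum_{i=1}^k\lfloor a_i n^d\rfloor\,\delta_{x_i^\Points}$, a grain occupies volume $n^{-d}$, and $n^{-d}\sigma_n\to\mu := \sum_i a_i\delta_{x_i}$ weakly. In exact parallel with \eqref{eq:thediscreteobstacle}, let the continuum obstacle be $\gamma(x) := -|x|^2 - \sum_i a_i\,\green(x-x_i)$ with $\green$ as in \eqref{e.greenformula}; since $\tfrac1{2d}\Delta|x|^2\equiv1$ and $\tfrac1{2d}\Delta\green(\cdot-x_i) = -\delta_{x_i}$, this solves $\tfrac1{2d}\Delta\gamma = \mu - 1$. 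Set $s(x) := \inf\{f(x)\,:\,f\geq\gamma,\ \Delta f\leq0\}$ and $D := \{s>\gamma\}$, mimicking \eqref{e.majorant}. Since $\gamma$ is bounded above (it tends to $-\infty$ both near each $x_i$ and at infinity), classical obstacle–problem theory makes $s$ well defined, $C^{1,1}$, with $D$ open and bounded and $\tfrac1{2d}\Delta s = 1_D - \mu$; integrating $v := s-\gamma$ (which vanishes off $D$) against an integrable harmonic function on $D$ yields the quadrature identity, so $D$ is a quadrature domain. Caffarelli's theory further gives \emph{non-degeneracy}: $v\gtrsim\mathrm{dist}(\cdot,\partial D)^2$ inside $D$, so $\partial D$ has no inward cusps and $D$ equals, up to a null set, the interior of $\overline D$.

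Next, the divisible sandpile. By Lemma~\ref{l.twofamilies} the discrete odometer is $u_n = s_n - \gamma_n$ with $\gamma_n = -|x|^2 - g*\sigma_n$ the discrete obstacle. Using the Green–function asymptotics \eqref{e.greenasymp}–\eqref{e.greenformula}, after rescaling and subtracting an additive constant (logarithmically divergent when $d=2$, but harmless since adding a constant to $\gamma$ does not change $D$), $\gamma_n\to\gamma$ locally uniformly on $\R^d\setminus\{x_1,\dots,x_k\}$; near the $x_i$ the odometer blows up, so those points lie deep inside $D_n$ and cause no trouble. Because the map ``obstacle $\mapsto$ obstacle–problem solution'' of \eqref{e.majorant} is monotone and $1$-Lipschitz for the sup norm, $s_n\to s$ and hence $u_n\to v$ locally uniformly. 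Then $D_\epsilon\cap\tfrac1n\Z^d\subset D_n$ for large $n$ because $v\gtrsim\epsilon^2$ on $D_\epsilon$ (non-degeneracy), while $D_n\subset D^\epsilon$ follows from the Least Action Principle (Theorem~\ref{divLAP}): a slightly dilated, discretized continuum solution is an admissible $w$ in \eqref{e.divLAP}, forcing $u_n$ to vanish outside $D^\epsilon$. This is the argument behind Theorem~\ref{t.divsand} with a multi-bump obstacle in place of a single one, and it gives $D_n\to D$.

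For the rotor aggregate, Lemma~\ref{l.odomflow} writes the odometer gradient as $2d\,\theta + \beta$ with $\beta$ bounded, so the discrepancy between $\tfrac1{2d}\Delta u_n$ and the exact source balance $1_{R_n}-\sigma_n$ is a discrete divergence of a bounded field; solving the ensuing Poisson equation (which is where the smoothing of Lemma~\ref{l.smoothing} and the no–thin–tentacles machinery of Proposition~\ref{p.nothintentacles} enter, exactly as in the proof of Theorem~\ref{t.rotorlog}) shows that $R_n$ and $D_n$ differ by $o(n)$ in Hausdorff distance — the multi-source version of Theorem~\ref{t.rotorlog}, worked out in \cite{LP10}, with bounds uniform over the initial rotors and the mechanism — so $R_n$ has the same limit $D$. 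Internal DLA has its own abelian property \cite{DF91}; running the Lawler–Bramson–Griffeath martingale argument \cite{LBG92} (and its sharpenings) against harmonic test functions adapted to $D$ shows that the IDLA odometer concentrates around $v_n$ up to $o(n^2)$ with probability $1 - e^{-n^c}$, so by Borel–Cantelli $I_n\to D$ almost surely. Thus all three clusters converge to the same $D$.

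The step I expect to be the main obstacle is the non-degeneracy and boundary regularity of the continuum obstacle problem, together with its quantitative deployment: one must show that the various errors — discretization of $\gamma$, the gap $\lfloor a_i n^d\rfloor\neq a_i n^d$, the bounded rotor Laplacian error, the $o(n^2)$ IDLA fluctuations — each displace $\partial D$ by only $o(n)$ in rescaled coordinates, so that the inner/outer sandwich \eqref{e.ioconvintro} closes. Once the divisible case and this regularity are in hand, the passage to the rotor and internal DLA clusters is a comparatively soft argument resting on their abelian properties and the a priori odometer bounds already in this section.
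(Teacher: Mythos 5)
The survey does not prove this theorem; it only states it and cites \cite{LP10}, so I am comparing your sketch against the strategy of that reference. Your outline is the right one: identify $D$ as the noncoincidence set of the continuum obstacle problem with obstacle $\gamma(x) = -|x|^2 - \sum_i a_i \green(x-x_i)$, prove the divisible sandpile converges by passing the discrete obstacle problem to the continuum one, and then transfer to rotor aggregation and internal DLA by odometer comparison. That is indeed the architecture of \cite{LP10}.

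There is, however, a genuine gap at the step ``Because the map `obstacle $\mapsto$ obstacle-problem solution' of \eqref{e.majorant} is monotone and $1$-Lipschitz for the sup norm, $s_n\to s$.'' The $1$-Lipschitz property compares two solutions of the \emph{same} obstacle problem (same Laplacian, same class of admissible functions) under a change of obstacle. Here $s_n$ is the least discrete-superharmonic majorant on $\frac1n\Z^d$ while $s$ is the least superharmonic majorant on $\R^d$; these are different variational problems, and uniform closeness of $\gamma_n$ to $\gamma$ does not by itself give closeness of $s_n$ to $s$. Bridging discrete and continuum superharmonicity is precisely the technical core of \cite{LP10}: one must show that the discrete solution, suitably interpolated, is close to a continuum superharmonic majorant of $\gamma$ (using, e.g., that the discrete Laplacian of a $C^2$ function is within $O(n^{-1})$ of the continuum one, together with smoothing/mollification of the discrete solution and a careful handling of the singularities at the $x_i$), and conversely that a continuum superharmonic majorant can be discretized with controlled error. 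This two-sided comparison is where most of the work lives. You correctly flag non-degeneracy and free-boundary regularity as needed to convert $\|s_n - s\|_\infty \to 0$ into set convergence in the sense of \eqref{e.ioconvintro}, but the step you should have singled out as the main obstacle is this discrete-to-continuum transfer, which your Lipschitz remark does not supply.

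A smaller point of attribution: the no-thin-tentacles machinery (Proposition~\ref{p.nothintentacles}, Lemma~\ref{l.smoothing}) used in this survey's proof of Theorem~\ref{t.rotorlog} postdates \cite{LP10}; the rotor comparison in \cite{LP10} proceeds via the Least Action Principle and a direct sandwich against the divisible odometer, with a weaker $O(r^{(d-1)/d}\log r)$ outer bound that is nevertheless $o(n)$ and hence sufficient for the scaling limit. Your appeal to the sharper tentacle estimates would also work, but it is not what the cited proof uses. The IDLA portion of your sketch (LBG martingale argument with harmonic test functions, Borel--Cantelli over the sub-exponential tail) is in the spirit of \cite{LP10} and is fine in outline.
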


The limiting set $D$ is called a $k$-point quadrature domain. It is characterized up to measure zero by the inequalities
	\[ \int_D h \d x \leq \sum_{i=1}^k a_i h(x_i) \]
for all integrable superharmonic functions $h$ on $D$, where $\d x$ is Lebesgue measure on $\R^d$.
The subject of quadrature domains in the plane begins with Aharonov and Shapiro \cite{AS76} and was developed by Gustafsson \cite{Gustafsson83}, Sakai \cite{Sakai82,Sakai84} and others.
 The boundary of a $k$-point quadrature domain in the plane lies on an algebraic curve of degree $2k$. In dimensions $d \geq 3$, it is not known whether the boundary of $D$ is an algebraic surface!

\section{Scaling limit of the abelian sandpile on $\Z^2$}

Now that we have seen an example of a universal scaling limit, let us return to our very first example, the abelian sandpile with discrete particles.

Take as our underlying graph the square grid $\Z^2$, start with $n$ particles at the origin and stabilize. The resulting configuration of sand appears to be \emph{non-circular} (Figure~\ref{f.singlesource})---so we do not the scaling limit to be universal like the one in Theorem~\ref{multiplepointsources}.  In a breakthrough work \cite{PS13}, Pegden and Smart proved existence of its scaling limit as $n \to \infty$. To state their result, let
	\[ s_n = n\delta_\zero + \Delta u_n \]
be the sandpile formed from $n$ particles at the origin in $\Z^d$, and consider the rescaled sandpile
	\[ \bar s_n(x) = s_n(n^{1/d} x). \]

\begin{theorem} \cite{PS13} \label{t.PS}
There is a function $s : \R^d \to \R$ such that $\bar s_n \to s$ weakly-$*$ in $L^\infty(\R^d)$.
\end{theorem}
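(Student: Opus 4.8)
The plan is to recast the rescaled odometer as an obstacle-problem solution and show convergence of these solutions, from which weak-$*$ convergence of the Laplacians (the sandpiles) follows. First I would normalize: set $v_n(x) = n^{-2/d} u_n(n^{1/d} x)$, the rescaled sandpile odometer on the lattice $n^{1/d}\Z^d$ rescaled to $\Z^d$-with-mesh-$n^{-1/d}$. By the Least Action Principle of Section~1, $u_n$ is the least nonnegative integer-valued function with $n\delta_\zero + \Delta u_n \le \deg - 1$; dropping the integrality constraint only changes things by $O(1)$ pointwise (the divisible-sandpile odometer of Section~2 dominates and is within a bounded additive error, which vanishes after the $n^{-2/d}$ rescaling). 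So asymptotically $v_n$ solves a discrete obstacle problem: write $u_n = s_n^{\mathrm{obs}} - \gamma_n$ as in Lemma~\ref{l.twofamilies}, with obstacle $\gamma_n$ having $\frac{1}{2d}\Delta\gamma_n = \delta_\zero - 1$ plus the sandpile correction, and rescale. The key point is that after rescaling $v_n$ is uniformly bounded and uniformly Lipschitz on compact sets (the cluster has diameter $\asymp n^{1/d}$, and $|\Delta u_n|$ is bounded away from the origin, giving quadratic-growth control à la Lemma~\ref{l.atmostquadratic}), so by Arzel\`a--Ascoli every subsequence has a locally uniform limit $v$.

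Second, I would characterize the limit $v$ as the solution of a continuum free-boundary problem. The obstacle-problem formulation passes to the limit: the discrete constraint $\frac{1}{2d}\Delta u_n \le 1 - \sigma_n$ with $\sigma_n \ge 0$, together with the complementarity $u_n > 0 \Rightarrow$ (stable), becomes, in the limit, $\Delta v \le 2d$ in the distributional sense, $v \ge 0$, and $v\,\Delta v = \dots$ holds only in a weak/viscosity sense because $s_n$ does \emph{not} converge strongly. This is exactly where the abelian sandpile departs from the divisible one: the limiting PDE is not the clean obstacle problem $\Delta v = 2d\cdot 1_{\{v>0\}}$ but rather an obstacle problem governed by a \emph{modified Laplacian} with an unusual constraint set coming from the allowable values of $s_n$. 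The right framework is that the sets $\{s_n = k\}$ for $k \in \{0,1,\dots,2d-1\}$ equidistribute on small scales into patterns, and the weak-$*$ limit $s$ is the corresponding "averaged" density; the convergence $\bar s_n \rightharpoonup s$ then follows once $v_n \to v$ in $C^0_{\mathrm{loc}}$ and $\Delta$ is applied distributionally, using that $\bar s_n = n\delta_\zero + \Delta u_n$ rescales to $\Delta v_n \to \Delta v$ in the sense of distributions, and $\Delta v_n = \bar s_n + $ (a measure converging to $0$).

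Third, the crux is \textbf{uniqueness of the subsequential limit}, i.e., identifying the limiting obstacle problem well-posedly so that all subsequences agree. The difficulty is that, unlike the divisible case, one cannot write down the limit PDE by inspection; Pegden--Smart's insight is to work with the \emph{least superharmonic majorant} directly at the continuum level. Concretely, let $\Gamma(x) = -|x|^2/2d - g(x)$-type continuum obstacle (the Newtonian potential of a unit point mass minus a paraboloid, as in \eqref{e.pointsourceobstacle} rescaled), let $S$ be its least "$\Delta$-bounded-below-compatible" majorant in an appropriate class of functions whose Laplacian takes values in the fixed integer set $\{0,1,\dots,2d-1\}$ shifted, and show $v = S - \Gamma$. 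Uniqueness comes from the fact that this majorant is characterized by a comparison principle: any two candidate limits are both minimal in the same ordered class, hence equal. I would prove the matching bounds by (a) showing $v_n \le$ (discretization of $S-\Gamma$) $+ o(1)$ using that the discrete odometer is the least majorant, and (b) the reverse inequality by constructing, from $S - \Gamma$, a near-stabilizing configuration for $s_n$, invoking the Least Action Principle to conclude $u_n \ge $ (discretization) $- o(1)$.

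The \textbf{main obstacle} is step three: the limiting free boundary problem has no classical formulation, so one must simultaneously (i) guess the correct continuum variational class — functions with "Laplacian valued in the sandpile spectrum," made precise via the Tutte-polynomial/Apollonian structure that Pegden--Smart later exposed — and (ii) prove a comparison/uniqueness principle in that class strong enough to pin down the limit. The equicontinuity and compactness (step one) are routine given Lemma~\ref{l.atmostquadratic}; the distributional convergence of Laplacians (step two) is soft; but the well-posedness of the limit PDE, which prevents the subsequential limits from depending on the subsequence, is the entire content of the theorem and requires the elliptic-regularity-with-discontinuous-data machinery developed in \cite{PS13}.
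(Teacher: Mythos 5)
Your overall architecture is reasonable --- rescale the odometer, extract subsequential limits by compactness, recognize the limit as the solution of a continuum variational problem, and then prove uniqueness of that solution --- and this does roughly match the structure of the Pegden--Smart argument. However, there is a substantive error at the heart of your step three that would derail the proof.

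You describe the constraint on the limiting odometer as ``functions whose Laplacian takes values in the fixed integer set $\{0,1,\dots,2d-1\}$,'' and more generally frame the limit problem in terms of the Laplacian. This cannot be right. A constraint involving only $\Delta v$ is rotationally invariant and would not remember the lattice at all --- it would give the same scaling limit as the divisible sandpile (a ball), contradicting Figure~\ref{f.singlesource}. The essential discovery of \cite{PS13}, as recorded in the ``Limit of the least action principle'' display \eqref{e.limitLAP} in the survey, is that the constraint must be on the full \emph{Hessian}: $D^2(w+G) \in \Gamma(\Z^d)$, where $\Gamma(\Z^d)$ is the closure of the set of symmetric matrices $A$ for which the discrete sandpile $s_A = \Delta\ceiling{q_A}$, $q_A(x)=\frac12 x^TAx$, stabilizes locally. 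It is the anisotropy of $\Gamma(\Z^d)$ --- a set of $d\times d$ symmetric matrices, not a set of scalars --- that encodes which features of $\Z^d$ survive the scaling limit. Identifying this object and formulating the Hessian constraint in the viscosity sense (since $D^2 u \in \Gamma$ has no pointwise meaning for a merely continuous $u$) is the key idea you are missing. Relatedly, comparison and uniqueness for the limit problem are proved using viscosity-solution machinery for the fully nonlinear constraint $D^2 u \in \Gamma$, not a complementarity condition on $\Delta v$.

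A secondary issue: the Apollonian description of $\Gamma(\Z^2)$ comes from the later papers \cite{LPS1,LPS2}, not from \cite{PS13}, and the Tutte polynomial has nothing to do with this theorem (you may be conflating it with the looping-constant material from a different section). The existence proof in \cite{PS13} works with $\Gamma(\Z^d)$ abstractly, defined only by the local-stabilization criterion; no explicit description of $\Gamma$ is needed or used. So the ``guess the correct continuum class'' step you flag as the main obstacle is indeed the main obstacle, but its resolution --- the definition of $\Gamma(\Z^d)$ via quadratic sandpiles together with the viscosity interpretation of the Hessian constraint --- is precisely what your proposal does not supply.
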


The weak-$*$ convergence of $\bar s_n$ in $L^\infty$ means that for every ball $B(x,r)$,  the average of $s_n$ over $\Z^d \cap n^{1/d} B(x,r)$ tends as $n \to \infty$ to the average of $s$ over $B(x,r)$.

The limiting sandpile $s$ is lattice dependent. Examining the proof in \cite{PS13} reveals that the lattice dependence enters in the following way.
Each real symmetric $d \times d$ matrix $A$ defines a quadratic function $q_A (\xx) = \frac12 \xx^T A \xx$ and an associated sandpile $s_A : \Z^d \to \Z$
	\[ s_A = \Delta \ceiling{q_A} \, . \]
For each matrix $A$, the sandpile $s_A$ either \textbf{stabilizes locally} (that is, every site of $\Z^d$ topples finitely often) or fails to stabilize (in which case every site topples infinitely often). The set of \textbf{allowed Hessians} $\Gamma(\Z^d)$ is defined as the closure (with respect to the Euclidean norm $\|A\|_2^2 = \mathrm{Tr}(A^T A)$) of the set of matrices $A$ such that $s_A$ stabilizes locally.

One can convert the Least Action Principle into an obstacle problem analogous to Lemma~\ref{l.twofamilies} with an additional integrality constraint. The limit of these discrete obstacle problems on $\frac1n \Z^d$ as $n \to \infty$ is the following variational problem on $\R^d$.

\begin{namedtheorem}[Limit of the least action principle]
	\begin{equation} \label{e.limitLAP} u = \inf \big\{ w \in C(\R^d) \mid w \geq -G \text{ and } D^2(w+G) \in \Gamma(\Z^d) \big\}. \end{equation}
\end{namedtheorem}

Here $G$ is the fundamental solution of the Laplacian in $\R^d$. The infimum is pointwise, and the minimizer $u$ is related to the the sandpile odometers $u_n$ by
	\[ \lim_{n \to \infty} \frac{1}{n} u_n(n^{1/2} x ) = u(x) + G(x). \]
The Hessian constraint in \eqref{e.limitLAP} is interpreted \textbf{in the sense of viscosity}:
	\[ D^2 \phi(x) \in \Gamma(\Z^d) \]
whenever $\phi$ is a $C^\infty$ function touching $w+G$ from below at $x$ (that is, $\phi(x)=w(x)+G(x)$ and $\phi-(w+G)$ has a local maximum at $x$).

\begin{figure}
\centering
\begin{tabular}{ccc}
\includegraphics[height=.25\textheight]{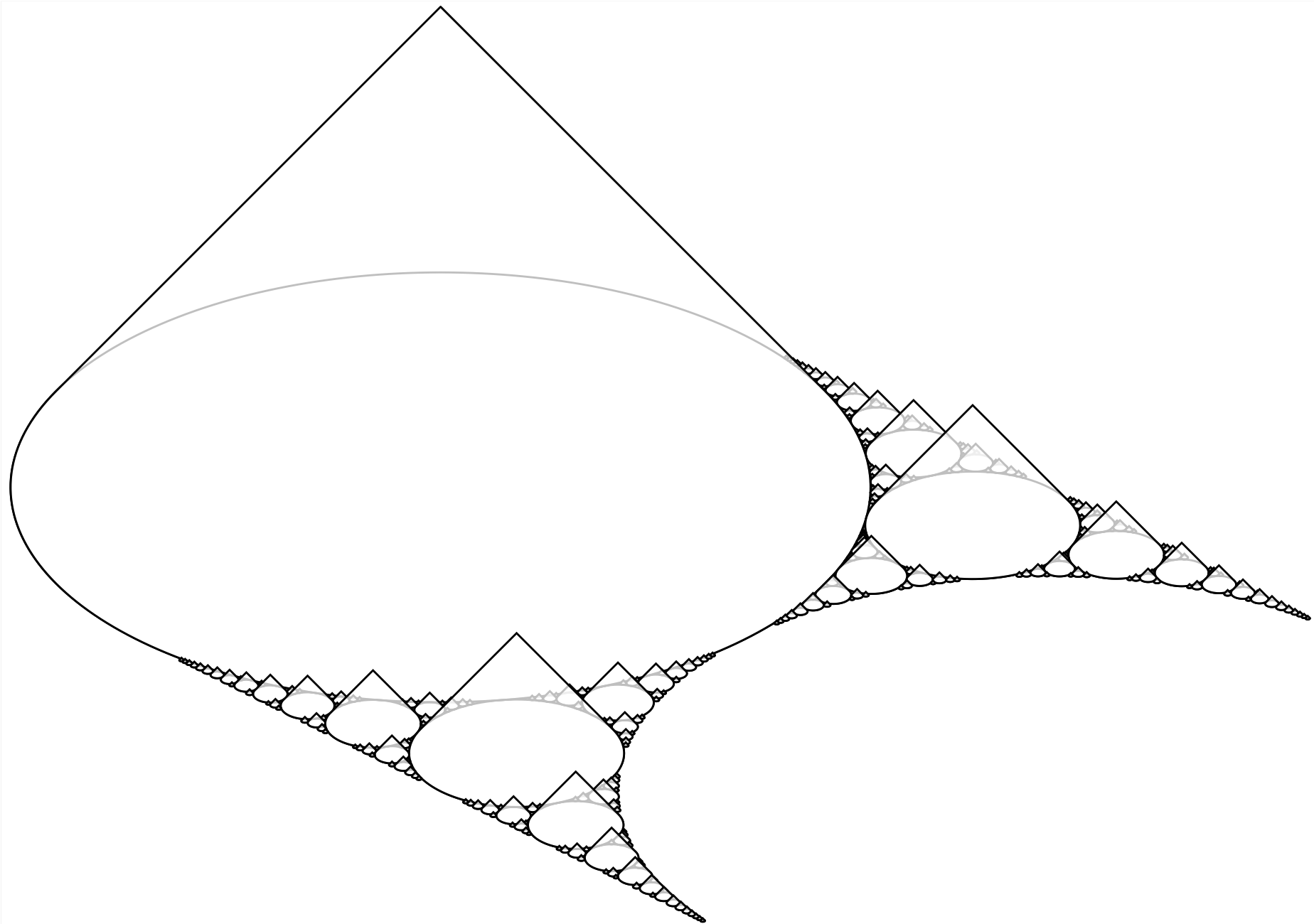}  &
\includegraphics[height=.25\textheight]{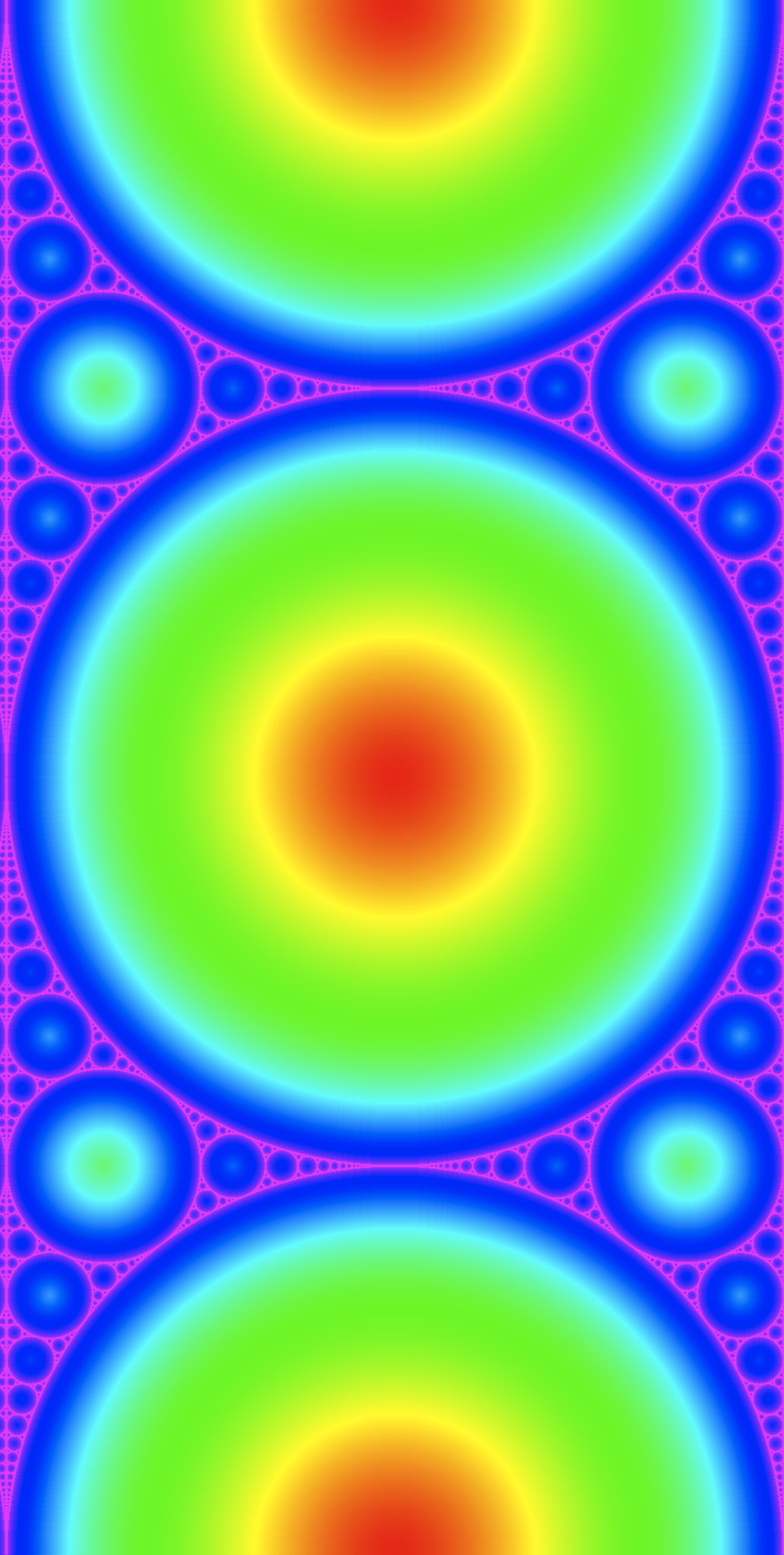} \, & \,
\includegraphics[height=.25\textheight]{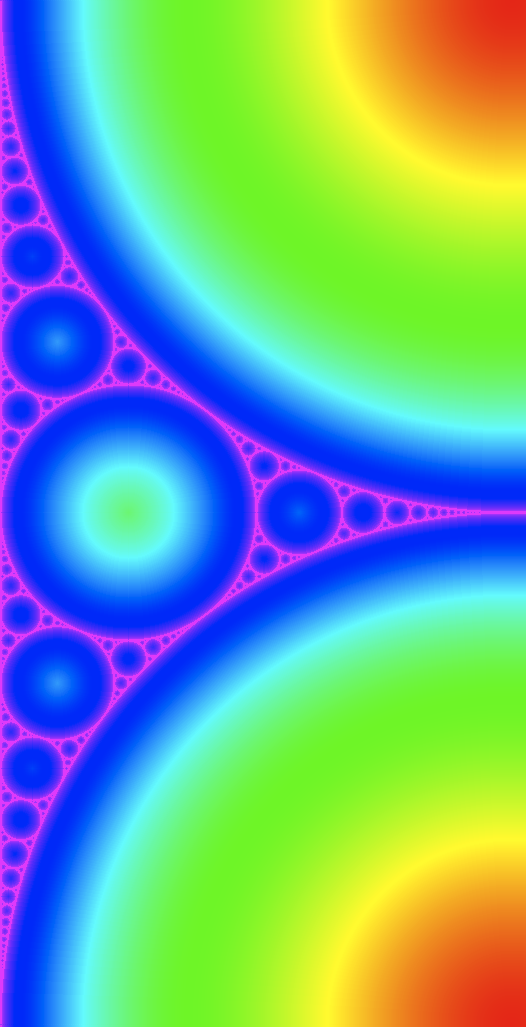} \\
(i) & (ii) & (iii)
\end{tabular}
\caption{(i) According to the main theorem of \cite{LPS2}, the set of allowed Hessians $\Gamma(\Z^2)$ is the union of slope $1$ cones based at the circles of an \textbf{Apollonian circle packing} in the plane of $2\times 2$ real symmetric matrices of trace $2$. (ii) The same set viewed from above: Color of point $(a,b)$ indicates the largest $c$ such that
\usebox{\smlmat}
The rectangle shown, $(a,b) \in [0,2] \times [0,4]$, extends
periodically to the entire plane.
(iii) Close-up of the lower left corner $(a,b) \in[0,1] \times [0,2]$.
}
\label{f.gammaZ2}
\end{figure}

The obstacle $G$ in \eqref{e.limitLAP} is a spherically symmetric function on $\R^d$, so the lattice-dependence arises solely from $\Gamma(\Z^d)$.  Put another way, the set $\Gamma(\Z^d)$ is a way of quantifying \emph{which features of the lattice $\Z^d$ are still detectable in the limit} of sandpiles as the lattice spacing shrinks to zero.

An explicit description of $\Gamma(\Z^2)$ appears in \cite{LPS2} (see Figure~\ref{f.gammaZ2}), and explicit fractal solutions of the \textbf{sandpile PDE}
	\[ D^2 u \in \partial \Gamma(\Z^2) \]
are constructed in \cite{LPS1}. See \cite{Pegden} for images of $\Gamma(L)$ for some other two-dimensional lattices $L$.

\section{The sandpile group of a finite graph}
\label{s.group}

Let $G=(V,E)$ be a finite connected graph and fix a \textbf{sink} vertex $z \in V$. A \textbf{stable} sandpile is now a map $s : V \setminus \{z\} \to \N$ satisfying $s(x) < \deg(x)$ for all $x \in V \setminus \{z\}$.  As before, sites $x$ with $s(x) \geq \deg(x)$ topple by sending one particle along each edge incident to $x$, but now particles falling into the sink disappear.

Define a Markov chain on the set of stable sandpiles
as follows: at each time step, add one sand grain at a vertex of $V \setminus \{z\}$ selected uniformly at random, and then perform all possible topplings until the sandpile is stable.  Recall that a state $s$ in a finite Markov chain is called \textbf{recurrent} if whenever $s'$ is reachable from $s$ then also $s$ is reachable from $s'$.
Dhar~\cite{Dhar90} observed that the operation $a_x$ of adding one particle at vertex $x$ and then stabilizing is a permutation of the set $\Rec(G,z)$ of recurrent sandpiles.  These permutations obey the relations
	\begin{align*} a_x a_y = a_y a_x \quad \text{ and } \quad
	 a_x^{\deg(x)} = \prod_{u \sim x} a_u \end{align*}
for all $x,y \in V\setminus \{z\}$. The subgroup $K(G,z)$ of the permutation group $\Sym(\Rec(G,z))$ generated by $\{a_x\}_{x \neq z}$ is called the \textbf{sandpile group} of $G$.  Although the set $\Rec(G,z)$ depends on the choice of sink vertex, the sandpile groups for different choices of sink are isomorphic (see, e.g., \cite{HLMPPW,Jar14}).

The sandpile group $K(G,z)$ has a free transitive action on $\Rec(G,z)$, so $\# K(G,z) = \# \Rec(G,z)$. One can use rotor-routing to define a free transitive action of $K(G,z)$ on the set of spanning trees of $G$ \cite{HLMPPW}. In particular, the number of spanning trees also equals $\#K(G,z)$. The most important bijection between recurrent sandpiles and spanning trees uses Dhar's burning algorithm \cite{Dhar90,MD91}.

A group operation $\oplus$ can also be defined directly on $\Rec(G,z)$, namely $s \oplus s'$ is the stabilization of $s+s'$. Then $s \mapsto \prod_{x} a_x^{s(x)}$ defines an isomorphism from $(\Rec(G,z),\oplus)$ to the sandpile group.

\begin{figure}
\centering
\includegraphics[height=0.3\textheight]{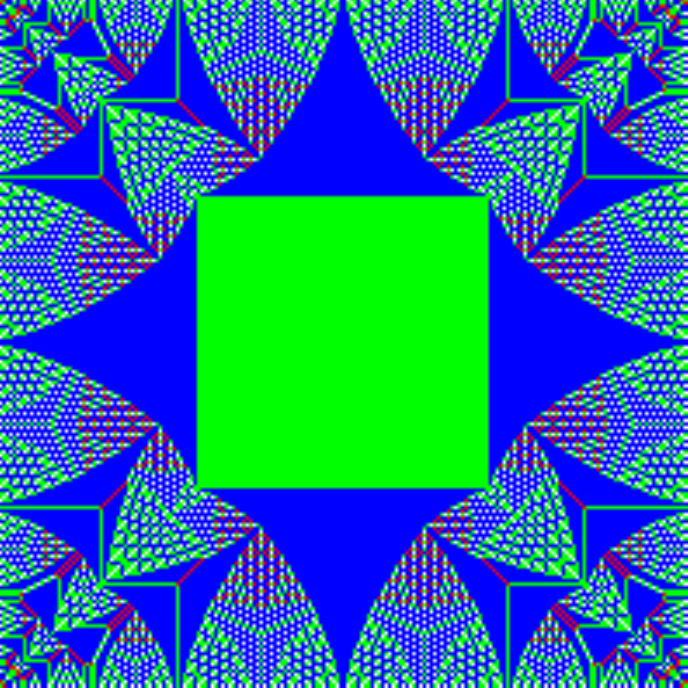} \;
\includegraphics[height=0.3\textheight]{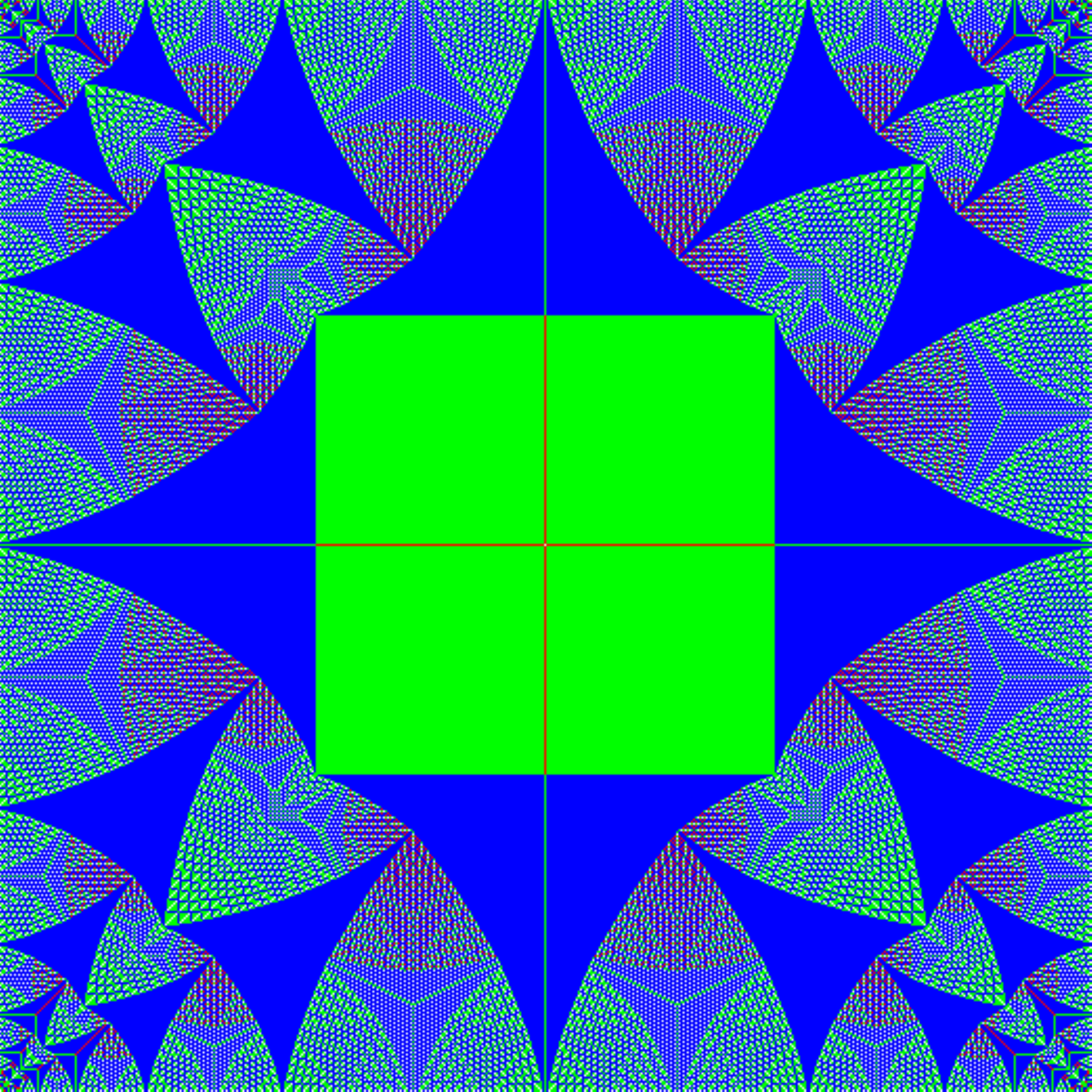}
\caption{Identity elements of the sandpile group $\Rec([0,n]^2,z)$  of the $n \times n$ grid graph with sink at the wired boundary (i.e., all boundary vertices are identified to a single vertex $z$), for $n=198$ (left) and $n=521$.}
\label{fig:identity}
\end{figure}

\section{Loop erasures, Tutte polynomial, Unicycles}

Fix an integer $d\geq 2$. The \textbf{looping constant} $\xi=\xi(\Z^d)$ is defined as the expected number of neighbors of the origin on the infinite loop-erased random walk in $\Z^d$. In dimensions $d \geq 3$, this walk can be defined by erasing cycles from the simple random walk in chronological order. In dimension $2$, one first defines the loop erasure of the simple random walk stopped on exiting the box $[-n,n]^2$ and shows that the resulting measures converge weakly \cite{Lawler80,Lawler-intersections}.

A \textbf{unicycle} is a connected graph with the same number of edges as vertices.  Such a graph has exactly one cycle (Figure~\ref{fig:unicycle}).  If $G$ is a finite (multi)graph, a \emph{spanning subgraph} of $G$ is a graph containing all of the vertices of $G$ and a subset of the edges.  A \textbf{uniform spanning unicycle} (USU) of $G$ is a spanning subgraph of $G$ which is a unicycle, selected uniformly at random.

An \textbf{exhaustion} of $\Z^d$ is a sequence $V_1 \subset V_2 \subset \cdots$ of finite subsets such that $\bigcup_{n \geq 1} V_n = \Z^d$.
Let $G_n$ be the multigraph obtained from $\Z^d$ by collapsing $V_n^c$ to a single vertex $z_n$, and removing self-loops at $z_n$.   We do not collapse edges, so $G_n$ may have edges of multiplicity greater than one incident to $z_n$.  Theorem~\ref{t.looping}, below, gives a numerical relationship between the looping constant $\xi$ and the \textbf{mean unicycle length}
	\[ \lambda_n = \EE \left[ \mbox{length of the unique cycle in a USU of $G_n$} \right]. \]
as well as the \textbf{mean sandpile height}
	\[ \zeta_n = \EE \left[\mbox{number of particles at $\zero$ in a uniformly random recurrent sandpile on $V_n$} \right]. \]

To define the last quantity of interest, recall that the \textbf{Tutte polynomial} of a finite (multi)graph $G=(V,E)$ is the two-variable polynomial
	\[ T(x,y) = \sum_{A \subset E} (x-1)^{c(A)-1} (y-1)^{c(A) + \#A - n} \]
where $c(A)$ is the number of connected components of the spanning subgraph $(V,A)$.  Let $T_n(x,y)$ be the Tutte polynomial of $G_n$. The \textbf{Tutte slope} is the ratio
	\[ \tau_n = \frac{ \frac{\partial T_n}{\partial y}(1,1)  }{(\#V_n) T_n(1,1)}. \]
A combinatorial interpretation of $\tau_n$ is the number of spanning unicycles of $G_n$ divided by the number of rooted spanning trees of $G_n$.

For a finite set $V \subset \Z^d$, write $\partial V$ for the set of sites in $V^c$ adjacent to $V$.   		

\begin{theorem} \cite{LP14}
\label{t.looping}
Let  $\{ V_n \}_{n \geq 1}$ be an exhaustion of $\Z^d$ such that
$V_1 = \{\zero \}$, $\# V_n = n$, and $\# (\partial V_n) / n \to 0$.
Let $\tau_n, \zeta_n, \lambda_n$ be the Tutte slope, sandpile mean height and mean unicycle length in $V_n$. Then the following limits exist:
	\[ \tau =  \lim_{n \to \infty} \tau_n, \qquad
	   \zeta = \lim_{n \to \infty} \zeta_n, \qquad
	   \lambda = \lim_{n \to \infty} \lambda_n. \]
Their values are given in terms of the looping constant $\xi = \xi(\Z^d)$ by
	\begin{equation} \label{eq:theformulas} \tau = \frac{\xi-1}{2}, \qquad \zeta = d + \frac{\xi-1}{2}, \qquad \lambda = \frac{2d-2}{\xi-1}. \end{equation}
\end{theorem}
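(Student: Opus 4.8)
\emph{Overview.} The plan is to prove three asymptotic relations along the exhaustion $\{V_n\}$ and combine them: \emph{(A)} $\zeta_n = d + \tau_n + o(1)$; \emph{(B)} $\tau_n\lambda_n = d-1+o(1)$; \emph{(C)} $\tau_n \to \frac{\xi-1}{2}$. Relation (C) is the substantive one. Granting all three, (C) yields both the existence of $\tau=\lim_n\tau_n$ and its value; then (A) gives $\zeta = d+\tau = d+\frac{\xi-1}{2}$ and (B) gives $\lambda = (d-1)/\tau = \frac{2d-2}{\xi-1}$, with all three limits existing, which is \eref{theformulas}. The hypothesis $\#(\partial V_n)/n\to 0$ enters only through the estimates $\#V(G_n)=n+1$, $\deg_{G_n}(z_n)=O(\#\partial V_n)=o(n)$, $\#E(G_n)=dn+O(\#\partial V_n)=dn+o(n)$, and the fact that for each fixed $r$ the set of $x\in V_n$ with $\dist(x,\partial V_n)\le r$ has size $O(\#\partial V_n)=o(n)$.

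\emph{Relation (A).} By the theorem of Merino L\'opez, $T_n(1,y) = \sum_{c\in\Rec(G_n,z_n)} y^{\,|c| - \#E(G_n) + \deg_{G_n}(z_n)}$, where $|c|:=\sum_x c(x)$. Differentiating at $y=1$ and using $T_n(1,1)=\#\Rec(G_n,z_n)$ together with $\tfrac{\partial T_n}{\partial y}(1,1)=\#\{\text{spanning unicycles of }G_n\}=n\tau_n T_n(1,1)$, we get, for a uniformly random recurrent $c$ on $V_n$, that $\EE[|c|]=\#E(G_n)-\deg_{G_n}(z_n)+n\tau_n$, hence $\tfrac1n\EE[|c|]=d+\tau_n+o(1)$. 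On the other hand $\tfrac1n\EE[|c|]=\tfrac1n\sum_{x\in V_n}\EE[c(x)]$; since the uniform recurrent-sandpile measure on $V_n$ converges (as $\dist(\cdot,\partial V_n)\to\infty$) to a translation-invariant measure on $\Z^d$ (Athreya--J\'arai), heights are bounded by $2d-1$, and only $o(n)$ sites of $V_n$ lie within a fixed distance of $\partial V_n$, this average converges to $\zeta:=\lim_n\EE[c_n(\zero)]$. Comparing the two expressions gives $\zeta=d+\tau$.

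\emph{Relation (B), and the reduction for (C).} Sending a pair $(T,e)$ --- a spanning tree $T$ of $G_n$ and a non-tree edge $e$ --- to the spanning unicycle $T\cup\{e\}$, the fiber over a unicycle $U$ with cycle length $\ell(U)$ has size $\ell(U)$ (delete any of its cycle edges). Hence $\sum_U\ell(U)=\#\{\text{trees}\}\cdot(\#E(G_n)-\#V(G_n)+1)$, so $\lambda_n=(\#E(G_n)-\#V(G_n)+1)/(n\tau_n)$ and $\tau_n\lambda_n=(\#E(G_n)-\#V(G_n)+1)/n=d-1+o(1)$. Since moreover $\ell(T\cup\{e\})=1+\dist_T(a,b)$ when $e=\{a,b\}\notin T$, the same count refines to $\tau_n=\tfrac1n\sum_{e=\{a,b\}\in E(G_n)}\EE_T\big[\mathbf{1}_{e\notin T}/(1+\dist_T(a,b))\big]$, with $T$ a uniform spanning tree of $G_n$. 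Edges within $O(1)$ of $\partial V_n$ contribute $o(1)$ in total (their summands lie in $[0,\tfrac14]$ and they number $o(n)$); for bulk edges, the uniform spanning tree of $G_n$ converges locally to the wired uniform spanning tree $\mathcal T$ of $\Z^d$ (in which the path between two fixed neighbours is a.s.\ finite), so the bounded summand passes to the limit, and by translation and lattice symmetry
\[ \tau_n \longrightarrow d\,\EE_{\mathcal T}\!\left[\frac{\mathbf{1}_{\{\zero,\basis_1\}\notin\mathcal T}}{1+\dist_{\mathcal T}(\zero,\basis_1)}\right] = \frac12\,\EE_{\mathcal T}\!\left[\sum_{u\sim\zero}\frac{\mathbf{1}_{\{\zero,u\}\notin\mathcal T}}{1+\dist_{\mathcal T}(\zero,u)}\right]. \]

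\emph{Relation (C): the crux.} It remains to evaluate this last quantity as $\tfrac{\xi-1}{2}$. The approach I would take is to sample $\mathcal T$ by Wilson's algorithm rooted at infinity, starting its first branch from $\zero$: that branch is the infinite loop-erased random walk $\gamma$ from $\zero$, for which $\xi=\EE[\#\{u\sim\zero:u\in\gamma\}]$ by definition of the looping constant, with $\#\{u\sim\zero:u\in\gamma\}\ge 1$ always. For each neighbour $u$ of $\zero$ with $\{\zero,u\}\notin\mathcal T$, the edge $\{\zero,u\}$ closes the tree-path from $\zero$ to $u$ into a cycle of length $1+\dist_{\mathcal T}(\zero,u)$; running Wilson's algorithm and recording, for each such cycle, which of its (at most two) edges incident to $\zero$ is the one that completes it --- and invoking the reversibility of loop-erased bridges --- one identifies the expected $\ell^{-1}$-weighted number of these cycles with $\tfrac12(\xi-1)$, the factor $\tfrac12$ coming from the two cycle-neighbours of $\zero$ and the subtracted $1$ from the deterministic first step of $\gamma$. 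This final identification --- turning a uniform-spanning-tree functional at $\zero$ into the loop-erased-walk neighbour count --- is the step I expect to be the main obstacle; everything else is bookkeeping with Merino's theorem, local convergence of uniform spanning trees, and the infinite-volume sandpile measure. (Alternatively one may reach (C) through $\zeta$ directly, via Dhar's burning bijection, which expresses $\zeta_n$ as a uniform-spanning-tree expectation whose limit is $d+\tfrac{\xi-1}{2}$ by the same Wilson's-algorithm computation.)
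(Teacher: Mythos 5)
Your organization mirrors that of \cite{LP14}: the algebraic relations (A) and (B) are exactly the bookkeeping steps in that paper, and you have them right. Relation (A) follows from Merino's theorem (with the correct level statistic $|c|-\#E(G_n)+\deg(z_n)$, giving $\tfrac1n\EE|c| = d+\tau_n+o(1)$) combined with the Athreya--J\'arai infinite-volume limit; relation (B) is the straightforward double count of pairs (tree, non-tree edge) against unicycles weighted by cycle length. Your further reduction of (C) to the wired-UST functional
\[ \tau \;=\; \frac12\,\EE_{\mathcal T}\!\left[\sum_{u\sim\zero}\frac{\mathbf{1}_{\{\zero,u\}\notin\mathcal T}}{1+\dist_{\mathcal T}(\zero,u)}\right] \]
(via local convergence of finite-volume USTs and lattice symmetry, discarding the $o(n)$ near-boundary edges) is also correct.

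The genuine gap is precisely where you flag it: the identification of this UST functional with $\tfrac{\xi-1}{2}$. The sketch you give does not yet constitute an argument. If you run Wilson's algorithm with the first branch $\gamma$ from $\zero$, the neighbours $u=\gamma_k$ ($k\geq 2$) contribute $\sum_{k\geq 2,\,|\gamma_k|=1}\frac{1}{1+k}$ --- which is \emph{not} $\xi-1=\EE\#\{k\geq 2:|\gamma_k|=1\}$; the $\ell^{-1}$ weights do not simply ``cancel'' against the deterministic first step and the factor of two. The remaining contribution must come from neighbours $u\notin\gamma$ with $\{\zero,u\}\notin\mathcal T$, whose tree distance to $\zero$ depends on later stages of Wilson's algorithm, and nothing in ``reversibility of loop-erased bridges'' as stated pairs these terms up. In fact, the identity $\tau=\tfrac{\xi-1}2$ \emph{is} the theorem, and your formula for $\tau$ is merely an equivalent restatement of it, not a reduction to something easier. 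The paper \cite{LP14} instead evaluates $\zeta$ directly by passing through the Dhar--Majumdar burning bijection: the burning algorithm expresses the sandpile height at $\zero$ as an explicit local functional of the spanning tree near $\zero$, which under Wilson's algorithm becomes a statement about how many neighbours of $\zero$ the loop-erased branch visits. You mention this route parenthetically at the end; it is not an alternative but the actual crux, and it is where the work lies. Everything else in your write-up is sound and matches the paper's scaffolding.
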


Dimension two is especially intriguing, because the quantities $\xi, \tau,\zeta,\lambda$ turn out to be rational numbers.

\begin{corollary}
\label{conj:5/4}
In the case $d=2$, we have \cite{KW11,PPR11,CS12}
	\[ \xi = \frac54 \quad \mbox{and} \quad \zeta = \frac{17}{8}. \]
Hence by Theorem~\ref{t.looping},
	\[ \tau = \frac18 \quad \mbox{and} \quad \lambda = 8. \]
\end{corollary}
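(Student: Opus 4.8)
The statement has two parts, and only the second really requires an argument: the values $\xi(\Z^2) = \frac54$ and $\zeta(\Z^2) = \frac{17}{8}$ are imported from the exact-solvability literature \cite{KW11,PPR11,CS12}, while $\tau = \frac18$ and $\lambda = 8$ are to be read off from Theorem~\ref{t.looping}. So the plan is essentially a substitution: the corollary is a bookkeeping consequence whose entire content is carried by the cited external results.

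First I would recall those external inputs. The mean height $\zeta = \frac{17}{8}$ of the uniform recurrent sandpile on $\Z^2$ is the rigorous version (Poghosyan--Priezzhev--Ruelle, Caracciolo--Sportiello \cite{PPR11,CS12}) of the classical Priezzhev computation of the one-site height probabilities $\PP(\text{height} = j)$, $j \in \{0,1,2,3\}$, obtained by evaluating the relevant lattice Green-function integrals (equivalently, spanning-tree determinants) in closed form. The looping constant $\xi = \frac54$ --- the expected number of neighbors of $\zero$ on the infinite two-dimensional loop-erased random walk --- can be taken from \cite{KW11,PPR11}, where the edge intensities of planar LERW are computed via the spanning-tree determinant and its conformally invariant scaling limit; alternatively it is forced by $\zeta = \frac{17}{8}$ together with the middle identity of \eqref{eq:theformulas} at $d = 2$, namely $\zeta = 2 + \frac{\xi - 1}{2}$, which gives $\xi - 1 = \frac14$. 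Either way, $\xi = \frac54$ and $\xi - 1 = \frac14$.

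It then remains to apply Theorem~\ref{t.looping} with $d = 2$, using any exhaustion $\{V_n\}_{n\ge 1}$ with $V_1 = \{\zero\}$, $\# V_n = n$ and $\#(\partial V_n)/n \to 0$ --- for instance a sequence of nested squares, padded out one vertex at a time so that $\# V_n = n$ exactly. The first formula in \eqref{eq:theformulas} gives $\tau = \frac{\xi - 1}{2} = \frac18$; the second re-derives $\zeta = 2 + \frac18 = \frac{17}{8}$, which is the consistency check with the imported value; and the third gives $\lambda = \frac{2d - 2}{\xi - 1} = \frac{2}{1/4} = 8$.

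The only point worth flagging is that there is no genuine obstacle internal to the corollary: all the depth sits in the exact evaluation of two-dimensional LERW intensities / planar sandpile height statistics, which we import wholesale. I would append a remark that the rationality of $\xi, \tau, \zeta, \lambda$ is a two-dimensional phenomenon tied to this exact solvability; for $d \ge 3$ no closed form for $\xi(\Z^d)$ is known, so there Theorem~\ref{t.looping} records relations among $\tau, \zeta, \lambda$ but yields no explicit numbers.
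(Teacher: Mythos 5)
Your proposal is correct and matches the paper's intent exactly: the corollary has no proof in the text because, as you say, it is a substitution into \eqref{eq:theformulas} once $\xi=\frac54$ (equivalently $\zeta=\frac{17}{8}$) is imported from \cite{KW11,PPR11,CS12}. Your arithmetic checks out, and your remark that the rationality is a planar exact-solvability phenomenon is exactly the point the surrounding discussion in the paper makes.
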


\begin{figure}
\centering
\includegraphics[height=0.3\textheight]{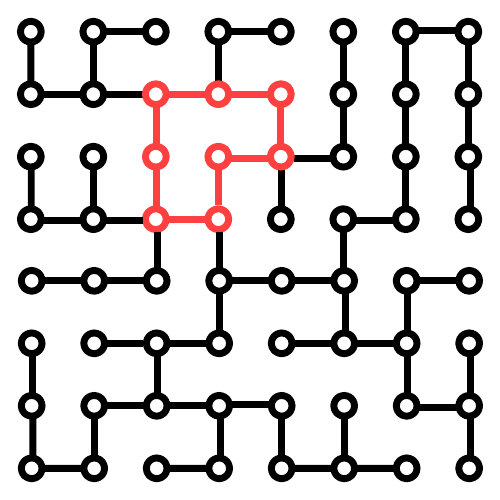}
\caption{A spanning unicycle of the $8\times 8$ square grid.  The unique cycle is shown in red.}
\label{fig:unicycle}
\end{figure}

The value $\zeta(\Z^2) = \frac{17}{8}$ was conjectured by Grassberger (see \cite{Dhar06}).  Poghosyan and Priezzhev \cite{PP11} observed the equivalence of this conjecture with $\xi(\Z^2)=\frac54$, and shortly thereafter three proofs \cite{PPR11,KW11,CS12} appeared.

The proof that $\zeta(\Z^2) = \frac{17}{8}$ by Kenyon and Wilson \cite{KW11} uses the theory of vector bundle Laplacians~\cite{Kenyon}, while the proof by Poghosyan, Priezzhev and Ruelle \cite{PPR11} uses monomer-dimer calculations.  Earlier, Jeng, Piroux and Ruelle~\cite{JPR06} had reduced the computation of $\zeta(\Z^2)$ to evaluation of a certain multiple integral which they evaluated numerically as $0.5 \pm 10^{-12}$. This integral was proved to equal $\frac12$ by Caracciolo and Sportiello \cite{CS12}, thus providing another proof.

 All three proofs involve powers of $1/\pi$ which ultimately cancel out.  For $i=0,1,2,3$ let $p_i$ be the probability that a uniform recurrent sandpile in~$\Z^2$ has exactly~$i$ grains of sand at the origin.  The proof of the distribution
\begin{align*}
p_0 &= \frac{2}{\pi^2} - \frac{4}{\pi^3} \\
p_1 &= \frac14 - \frac{1}{2\pi} - \frac{3}{\pi^2} + \frac{12}{\pi^3} \\
p_2 &= \frac38 + \frac{1}{\pi} - \frac{12}{\pi^3} \\
p_3 &= \frac38 - \frac{1}{2\pi} + \frac{1}{\pi^2} + \frac{4}{\pi^3}
\end{align*}
is completed in \cite{PPR11,KW11,CS12}, following work of \cite{MD91,Pri94,JPR06}.  In particular, $\zeta(\Z^2) = p_1 + 2p_2 + 3p_3 = \frac{17}{8}$.

Kassel and Wilson \cite{KW14} give a new and simpler method for computing $\zeta(\Z^2)$, relying on planar duality, which also extends to other lattices. For a survey of their approach, see \cite{Kas15}.

These objects on finite subgraphs of $\Z^d$ also have ``infinite-volume limits'' defined on $\Z^d$ itself: Lawler~\cite{Lawler80} defined the infinite loop-erased random walk, Pemantle~\cite{Pem91} defined the uniform spanning tree in~$\Z^d$, and Athreya and J\'{a}rai~\cite{AJ04} defined the infinite-volume stationary measure for sandpiles in~$\Z^d$. The latter limit uses the burning bijection of Majumdar and Dhar \cite{MD91} and the one-ended property of the trees in the uniform spanning forest \cite{Pem91,BLPS01}. As for the Tutte polynomials $T_n$ of finite subgraphs of $\Z^d$, the limit
	\[ t(x,y) := \lim_{n \to \infty} \frac1n \log T_n(x,y) \]
can be expressed in terms of the pressure of the Fortuin-Kasteleyn random cluster model.  By a theorem of Grimmett (see \cite[Theorem 4.58]{Grimmett}) this limit exists for all real $x,y>1$.  Theorem~\ref{t.looping} concerns the behavior of this limit as $(x,y) \to (1,1)$; indeed, another expression for the Tutte slope is
	\[ \tau_n =  \frac{\partial}{\partial y} \left.\left[\frac1n \log T_n(x,y) \right] \right|_{x=y=1}. \]

\section{Open problems}

We conclude by highlighting a few of the key open problems in this area.
\smallskip

\begin{enumerate}

\item Suppose $s(x)_{x \in \Z^2}$ are independent and identically distributed random variables taking values in $\{0,1,2,3,4\}$. Viewing $s$ as a sandpile, the event that every site of $\Z^2$ topples infinitely often is invariant under translation, so it has probability $0$ or $1$. We do not know of an algorithm to decide whether this probability is $0$ or $1$!  See \cite{FMR09,LMPU16}.
\medskip

\item Does the rotor-router walk in $\Z^2$ with random initial rotors (independently North, East, South, or West, each with probability $\frac14$) return to the origin with probability $1$?
The number of sites visited by such a walk in $n$ steps is predicted to be of order $n^{2/3}$ \cite{PPS98}. For a lower bound of that order, see \cite{range}. As noted there, even an upper bound of $o(n)$ would imply recurrence, which is not known!
\medskip

\item Recall that the weak-$*$ convergence in Theorem~\ref{t.PS}, proved by Pegden and Smart \cite{PS13}, means that the average height of the sandpile $s_n$ in any ball converges as the lattice spacing shrinks to zero.  A natural refinement would be to show that for any ball $B$ and any integer $j$, the fraction of sites in $B$ with $j$ particles converges.  Understanding the scaling limit of the sandpile identity elements (Figure~\ref{fig:identity}) is another appealing problem, solved in a special case by Caracciolo, Paoletti and Sportiello \cite{CPS08}.
\medskip

\item As proved in \cite{LPS2} (and illustrated in Figure~\ref{f.gammaZ2}), the maximal elements of $\Gamma(\Z^2)$ correspond to the circles in the Apollonian band packing of $\R^2$. Because the radius and the coordinates of the center of each such circle are rational numbers, each maximal element of $\Gamma(\Z^2)$ is a matrix with rational entries. Describe the maximal elements of $\Gamma(\Z^d)$ for $d \geq 3$. Are they isolated? Do they have rational entries?
\medskip

\end{enumerate}

\end{document}